\documentclass[letterpaper,10pt,reqno,onefignum,onetabnum]{amsart}
\usepackage[english]{babel}
\usepackage{amsmath}
\usepackage{amsthm}
\usepackage[foot]{amsaddr}
\usepackage[dvipsnames]{xcolor}
\usepackage{hyperref}
\hypersetup{
	colorlinks = true,
	linkcolor = OliveGreen,
	anchorcolor = OliveGreen,
	citecolor = OliveGreen,
	filecolor = OliveGreen,
	urlcolor = OliveGreen
}
\usepackage{float}
\usepackage{amsfonts}
\usepackage{amssymb}
\usepackage{graphicx}
\usepackage{epstopdf}
\usepackage{cases}
\usepackage{multirow}
\ifpdf
\DeclareGraphicsExtensions{.eps,.pdf,.png,.jpg}
\else
\DeclareGraphicsExtensions{.eps}
\fi

\newtheorem{teo}{Theorem}[section]     % numbered within section
\newtheorem{prop}[teo]{Proposition}    % shares teo counter
\newtheorem{lem}[teo]{Lemma}

\newtheorem{pro}[teo]{Problem}
\newtheorem{algo}[teo]{Algorithm}
\newtheorem{asume}[teo]{Assumption}

\newtheorem{rem}[teo]{Remark}

\newtheorem{defi}[teo]{Definition}

\usepackage{tikz}
\usepackage{booktabs}%
\usepackage{hhline}
\usetikzlibrary{matrix}
\usetikzlibrary{arrows}
\usepackage{verbatim}
\usepackage{mathrsfs}
\usepackage{bm}
\usepackage{color}
\usepackage{xcolor}
\usepackage{caption}
\usepackage{subcaption}
\usepackage{enumitem}
\newcommand{\N}{\mathbb N}

\newcommand{\R}{\mathbb R}

\renewcommand{\H}{\mathcal{H}}
\newcommand{\G}{\mathcal G}

\newcommand{\HH}{{\bm{\mathcal{H}}}}

\newcommand{\Id}{{\bf Id}}
\newcommand{\id}{\textnormal{Id}}
\newcommand{\x}{\bm x}

\newcommand{\w}{\bm w}

\newcommand{\T}{\tau}
\newcommand{\weak}{\rightharpoonup}
\newcommand{\ran}{\textnormal{ran}\,}
\newcommand{\dom}{\textnormal{dom}\,}
\newcommand{\epi}{\textnormal{epi}\,}

\newcommand{\zer}{\textnormal{zer}}

\newcommand{\gra}{\textnormal{gra}\,}

\newcommand{\scal}[2]{{\left\langle{{#1}\mid{#2}}\right\rangle}}

\newcommand{\menge}[2]{\big\{{#1}~\big |~{#2}\big\}}

\newcommand{\RPP}{\ensuremath{\left]0,+\infty\right[}}

\newcommand{\sri}{\ensuremath{\text{\rm sri}\,}}

\newcommand{\prox}{\ensuremath{\text{\rm prox}\,}}

\usepackage{geometry}
\numberwithin{equation}{section}

\numberwithin{equation}{section}

\floatname{algorithm}{Line Search}

\DeclareFontEncoding{FMS}{}{}
\DeclareFontSubstitution{FMS}{futm}{m}{n}
\DeclareFontEncoding{FMX}{}{}
\DeclareFontSubstitution{FMX}{futm}{m}{n}
\DeclareSymbolFont{fouriersymbols}{FMS}{futm}{m}{n}
\DeclareSymbolFont{fourierlargesymbols}{FMX}{futm}{m}{n}
\DeclareMathDelimiter{\nr}{\mathord}{fouriersymbols}{152}{fourierlargesymbols}{147}

\DeclareMathDelimiter{\nr}{\mathord}{fouriersymbols}{152}{fourierlargesymbols}{147}
\DeclareMathAlphabet{\mathpzc}{OT1}{pzc}{m}{it}

\title[Inexact Warped Resolvent iterations]{Inexact Warped Resolvent iterations}
\author{Raul T. Marcavillaca$^1$}
\address{$^1$Centro de Modelamiento Matem\'atico (CNRS UMI 2807), Universidad de Chile, Chile. {\it 
		E-mail address:} 
	{\sf{raultm.rt@gmail.com;
rtintaya@dim.uchile.cl}}.}
\author{Fernando Rold\'an$^2$}
\address{$^2$Departamento de Ingeniería Matemática and CI$^2$MA, Universidad de Concepción, Concepción, Chile. {\it 
		E-mail address:} 
	{\sf{fernandoroldan@udec.cl}}. }

\begin{document}
	\begin{abstract}
In this paper we aim to solve structured monotone inclusions using inexact warped resolvents evaluated under a relative-error criterion.  The resulting algorithms admit a geometric interpretation as relaxed projection methods onto dynamically generated cuts, extending classical projection--proximal and hybrid extragradient proximal frameworks to nonlinear warped resolvent. Under mild assumptions, we establish weak convergence of the iterates. We further derive strong convergence results by incorporating projection steps onto intersections of halfspaces via Haugazeau-type scheme, as well as linear convergence under a metric subregularity assumption. The proposed algorithm provides a unified framework for incorporating inexact resolvent computations into several classical schemes arising in monotone operator theory and primal-dual optimization, such as Tseng's forward-backward-forward splitting, forward-backward-half-forward, Chambolle--Pock, and Condat--V\~u. Finally, we present applications in saddle-point and structured convex minimization problems. Numerical experiments on synthetic saddle-point instances and computed tomography reconstruction demonstrate the computational advantages of the proposed methods.
	
		\par
		\bigskip
		
		\noindent \textbf{Keywords.} {\it Operator splitting, warped resolvents, monotone inclusions, projection methods, relative-error methods, convex optimization.}
		\par
		\bigskip \noindent
		2020 {\it Mathematics Subject Classification.} {47H05, 65K05, 
			65K15, 90C25.}
		%62H35, 94A08,
	\end{abstract}
	
	\maketitle
\section{Introduction}
Monotone inclusion problems constitute a broad and flexible framework encompassing several models arising in optimization \cite{Combettes2018MP}, variational inequalities \cite{bauschkebook2017}, equilibrium problems \cite{Combettes2005equilibrium}, partial differential equations \cite{AubinHelene2009,Glowinsky1975,Showalter1997}, signal processing and imaging \cite{BotHendrich2014TV,Briceno2011ImRe,BurgerSawatzkySteidl2014,chambolle2016AN}, traffic theory \cite{Nets1,GafniBert84}, machine learning \cite{BotrelaxFBF2023,Nocedal2018,CombettesPesquet2021strategies}, among others. In this work, we consider the following structured monotone inclusion problem.
\begin{pro}\label{pro:main}
Let $\HH$ be a real Hilbert space, let $\bm{A} \colon \HH \to 2^{\HH}$ be a maximally monotone operator, and let $\bm{C} \colon \HH \to \HH$ be a cocoercive operator. The problem is to
\begin{equation}\label{eq:promain}
\text{find } \x \in \HH \text{ such that } 0 \in \bm{A}\x+\bm{C}\x,
\end{equation}
under the assumption that the solution set, denoted by $\mathcal S$, is nonempty.
\end{pro}
Problem~\ref{pro:main} covers a large class of structured optimization models, variational systems and inclusion problems with complex structures. 
For instance, given real Hilbert spaces $\H$ and $\G$, the problem 
\begin{equation}\label{eq:probPDintro}
 \textnormal{find } x \in \H \textnormal{ such that } 0\in Ax+L^\ast BLx+Cx+Dx,
\end{equation}
where $A\colon \H \to 2^\H$ and $B\colon \G \to 2^\G$ are maximally monotone operators, $L \colon \H \to \G$ is a bounded linear operator, $C\colon \H \to \H$ is a cocoercive operator, and $D\colon \H \to \H$ is a Lipschitz continuous operator, can be reformulated as an instance of Problem~\ref{pro:main} through suitable primal-dual embeddings in the space $\HH = \H \times \G$ (see for instance \cite{MorinBanertGiselsson2022}).

Operator splitting methods for solving \eqref{eq:promain} have been extensively studied in the literature. For instance, one of the most popular algorithms is the forward-backward (FB) splitting \cite{Chenhg1997,passty1979JMAA}, which, for $x_0 \in \HH$ and $\gamma \in ]0,2\beta[$, where $\beta \in \RPP$ is the cocoercivity constant of $\bm{C}$, iterates as follows:
    \begin{equation}\label{eq:algFB}
	(\forall n\in\N)\quad x_{n+1} = J_{\gamma \bm{A}}(x_n-\gamma \bm{C}x_n).
\end{equation}
The sequence $(x_n)_{n \in \N}$ generated by FB converges weakly to a point in $\mathcal{S}$. In recent years, variants of FB including variable metrics and {\it warped resolvents} (also called nonlinear-forward-backward algorithms) have attracted considerable attention due to their flexibility and their ability to exploit the structure of large-scale optimization problems \cite{BrediesSun2015,BrediesSun17,Bredies2022DPP,BuiCombettesWarped2020,Giselsson2021NFBS,MorinBanertGiselsson2022,Xue2023}. Given a single valued operator $\bm M:\HH\to\HH$, the warped resolvent is defined by $J_{\bm{A}}^{\bm{M}} = (\bm{M}+\bm{A})^{-1}\circ \bm{M}$ and it was introduced in \cite{BuiCombettesWarped2020,Giselsson2021NFBS}. Note that $J_{\bm{A}}^{\Id}$ is the standard resolvent.  When $\bm M=\nabla f$, where $f$ is a differentiable convex function, one obtains Bregman-type resolvent operators \cite{bui2021bregman} .
For instance, given a nonnegative sequence $(\lambda_n)_{n \in \N}$, an algorithm generated by warped resolvents, introduced in \cite[Theorem~4.12]{combettes2024geometry},  iterates as follows
\begin{equation}\label{eq:algCombettes}
	(\forall n\in\N)\quad 
	\left\lfloor
	\begin{array}{l}
		w_n = J_{\bm{A}+\bm{C}}^{\bm{M}}x_n\\
		w_n^* = \bm{M}x_n-\bm{M}w_n-\bm{C}w_n\\
		t_n^* = w_n^*+\bm{C}x_n\\
		\delta_n = \scal{x_n-w_n}{t_n^*}-\frac{1}{4\beta}\|w_n-x_n\|^2 \\
		d_n = \begin{cases}
			\dfrac{\delta_n}{\|t^*_n\|^2} t^*_n, &\textnormal{ if } \delta_n >0; \\
			0,  &\textnormal{ otherwise } 
		\end{cases}\\
		x_{n+1}=x_n-\lambda_n d_n.
	\end{array}
	\right.
\end{equation}
 Under mild assumptions on $\bm{M}$ and $(\lambda_n)_{n \in \N}$, $(x_n)_{n \in \N}$ also converges weakly to a point in $\mathcal{S}$. A long list of popular splitting methods can be recovered from warped resolvent iterations, such as FB, forward-backward-forward (FBF) \cite{BuiCombettesWarped2020,Tseng2000SIAM}, forward-backward-half-forward (FBHF) \cite{BricenoDavis2018,Giselsson2021NFBS}, Chambolle--Pock \cite{ChambollePock2011,MorinBanertGiselsson2022}, Condat--V\~u \cite{Condat13,MorinBanertGiselsson2022,Vu13}, forward-reflected-backward (FRB) \cite{Malitsky2020SIAMJO,MorinBanertGiselsson2022}, Douglas--Rachford~\cite{Bredies2022DPP,Lions1979SIAM}, and forward-primal-dual-half-forward (FPDHF) \cite{MaulenRoldanVega2026,roldan2025forward}.

From a practical viewpoint, exact evaluations of proximal or resolvent operators are often computationally expensive or even impossible. Consequently, inexact proximal frameworks and relative-error criteria have become fundamental tools in modern optimization and monotone operator theory; see, for instance, \cite{Alves2020,AlvesLorenzNaldi2026,Eckstein2017MP,Solodov1999SVA}. In particular, the hybrid projection-proximal point method and the hybrid extragradient-proximal point method of Solodov and Svaiter \cite{Solodov1999SVA,solodov2000forcing} use approximate proximal information to construct separating hyperplanes or extragradient directions. Recently, an inexact version of the degenerate proximal point algorithm \cite{Bredies2022DPP} was proposed in \cite{AlvesLorenzNaldi2026} and, as a consequence, inexact versions of Chambolle--Pock and the Davis--Yin method \cite{DavisYin2017} were derived. For example, given $x \in \HH$, $p=J_{\bm{A}} x $ can be approximated by solving the system 
\begin{align*}
\begin{cases}
v \in \bm{A}p\\
e=v + x - p ,\\
\end{cases}
\end{align*}
where $e \in \HH$ is the error of the approximation. Note that, if $e = 0$, then $v = x-p$ and $p = J_{\bm{A}} x$.

In this paper, we introduce a relative-error warped resolvent framework based on geometrically generated separating halfspaces. The proposed methods admit a natural interpretation as relaxed projection schemes onto dynamically generated cuts, extending classical projection-proximal and hybrid extragradient proximal methodologies \cite{Solodov1999SVA,Solodov1999JCA,Solodov2001} to the nonlinear warped resolvent setting.  Our setting incorporates variable metrics through two positive definite operators $\bm P$ and $\bm S$, thereby providing substantial flexibility and allowing us to encompass several important algorithms from the literature within a unified geometric perspective. Our convergence analysis first establishes Fej\'er monotonicity, summability properties, and weak convergence of the generated iterates under mild assumptions. Moreover, by an adequate choice of the relaxation sequence $(\lambda_n)_{n \in \N}$, we also provide a warped resolvent algorithm with explicit step-sizes which has a simpler structure, which facilitates its numerical implementation.  We further obtain strong convergence by incorporating projection steps onto intersections of separating halfspaces, in the spirit of Haugazeau-type methods. In addition, linear convergence is derived under a metric subregularity assumption. As a consequence, we derive inexact versions of several well-known splitting schemes including FB, FBF, FBHF, Chambolle--Pock, Condat--V\~u, and FPDHF. These methods allow for inexact evaluations of the backward steps under relative-error criteria. We provide applications in saddle-point problems and structured convex minimization. In addition, numerical implementations on large-scale synthetic saddle-point instances and computed tomography reconstruction are presented to illustrate the numerical advantages and practical performance of the proposed algorithms.

The remainder of the paper is organized as follows. Section~\ref{sec:pre} collects notation and preliminary material. Section~\ref{sec:main} introduces the inexact warped resolvent framework and proves its basic convergence properties. In addition, we establish strong and linear convergence results. Section~\ref{sec:particular} derives several inexact splitting schemes as particular instances of the proposed framework. Section~\ref{sec:applications} presents applications and numerical experiments. Finally, Section~\ref{sec:conclu} provides concluding remarks.

\section{Preliminaries}\label{sec:pre}

In this section we collect notation, definitions, and basic auxiliary results used throughout the paper. Let $\mathbb{R}$ denote the set of real numbers. Throughout the paper, $\mathcal{H}$ and $\mathcal{G}$ denote real Hilbert spaces endowed with inner product $\scal{\cdot}{\cdot}$ and induced norm $\|\cdot\|$.  We denote by $\mathcal{P}(\mathcal{H})$ the set of bounded linear self-adjoint positive definite operators on $\mathcal{H}$. For $P \in \mathcal{P}(\mathcal{H})$, we define the norm $\|x\|_P := \sqrt{\scal {x}{Px}}$. Note that $(\H,\scal{\cdot}{\cdot}_P)$ is a real Hilbert space and we have
\begin{equation*}
  (\forall (x,y) \in \H^2) \quad  \scal{x}{y} \leq \|x\|_{P^{-1}}\|y\|_{P}.
\end{equation*}
Given $\rho>0$ and $\bar x \in \H$, we denote 
$B_S(\bar x,\rho)
=\{x\in\H:\|x-\bar x\|_S\le\rho\}$. In addition, we denote $\lambda_{\min}(P)$ and $\lambda_{\max}(P)$ as the positive constants such that
\begin{equation}\label{eq:lambd}
  (\forall x \in \H) \quad \lambda_{\min}(P)\|x\|^2 \le \|x\|_P^2 \le \lambda_{\max}(P)\|x\|^2. 
\end{equation}
Let $C \subset \mathcal{H}$ be a nonempty closed convex set and let $S \in \mathcal{P}(\mathcal{H})$. The distance from $x \in \mathcal{H}$ to $C$ with respect to the norm $\|\cdot\|_S$ is defined by 
\[
d_S(x,C) := \inf_{y \in C} \|x-y\|_S.
\]
Since $C$ is closed and convex, the distance is attained at the metric projection
\[
P_C^S(x) := \arg\min_{y \in C} \|x-y\|_S,
\]
and therefore
\[
d_S(x,C) = \|x-P_C^S x\|_S.
\]
The power set of $\mathcal{H}$ is denoted by $2^{\mathcal{H}}$. For a set-valued operator $A:\mathcal{H}\to 2^{\mathcal{H}}$, its graph is defined by
\[
\gra A := \{(x,u) \in \mathcal{H}\times\mathcal{H} \mid u \in Ax\}.
\]
An operator $A:\mathcal{H}\to 2^{\mathcal{H}}$ is said to be \emph{monotone} if
\[ (\forall ((x,u),(y,v)) \in (\gra A)^2)\quad 
\scal {u-v} {x-y} \ge 0.
\] 	Additionally, $A$ is maximally monotone if it is monotone and its graph is 
	maximal in the sense of 
	inclusions among the graphs of monotone operators. 
Given $P \in \mathcal{P}(\mathcal{H})$, the operator $A$ is said to be \emph{$\sigma$-strongly monotone with respect to $\|\cdot\|_P$} if $\sigma>0$ and
\[
(\forall ((x,u),(y,v)) \in (\gra A)^2) \quad \scal {u-v}{x-y }\ge \sigma \|x-y\|_P^2.
\]
We say that $A$ is injective if for all $(x,y) \in \H^2$ such  that  $Ax \cap Ay \neq\emptyset$ implies that $x = y$. The standard resolvent of $A$ is given by $J_A = (\id+A)^{-1}$. An extension of the classical resolvent is given by the {\it warped resolvent} \cite{BuiCombettesWarped2020}. Let $D\subset \H$ be nonempty, let $U:D\to\H$, and let $M:\H\to 2^\H$ be such that
\[
\ran U \subset \ran(U+M)
\quad\text{and}\quad
U+M \text{ is injective}.
\]
The \emph{warped resolvent} of $M$ with kernel $U$ is defined by $J_M^U := (U+M)^{-1}\circ U$. Equivalently, for every $x\in D$, the point $p = J_M^U x$ is characterized by
\begin{equation}%\label{eq:warped-cha}
Ux - Up \in M p.\notag
\end{equation}
We refer the reader to \cite{BuiCombettesWarped2020, combettes2024geometry} for further properties and illustrative examples.
% Finally, a differentiable function $f:\H \to \R$ is said to be $L$-smooth if its gradient $\nabla f$ is Lipschitz continuous with constant $L$, i.e.,
% \[
% \|\nabla f(x)-\nabla f(y)\|
% \leq
% L\|x-y\|,
% \qquad
% \forall x,y\in\H.
% \]}
Let $S \in \mathcal{P}(\mathcal{H})$,  $T:\mathcal{H}\to\mathcal{H}$, and $\beta >0$.
The operator $T:\mathcal{H}\to\mathcal{H}$ is $\beta$-cocoercive with respect to $S$ if 
\[ (\forall (x,y) \in \mathcal{H}^2) \quad
\scal {Tx-Ty}{x-y}
\ge
\beta \|Tx-Ty\|_{S^{-1}}^2,
\]
and it is $\beta$-Lipschitz with respect to $S$ if
\[ (\forall (x,y) \in \mathcal{H}^2) \quad
\|Tx-Ty\|_{S^{-1}} \le L \|x-y\|_S.
\]
%
% and it is \emph{skew} if
% \[
% (\forall (x,y) \in \mathcal{H}^2) \quad \scal{Tx-Ty}{x-y} = 0.
% \]
% If $L$ is a linear bounded and skew operator, then $L^* = -L$, and such operators are called \emph{linear skew-adjoint}.
In the case where $S= \id$, we simply say that the operator is $\beta$-cocoercive (or $\beta$-Lipschitz, respectively).  

Let $f:\H\to(-\infty,+\infty]$ be an extended real-valued function. The domain of $f$ is defined by $\dom f=\{x\in\H \mid f(x)<+\infty\}$. Recall that $f$ is said to be proper if $\dom f\neq\emptyset$, and convex (respectively, lower semicontinuous (l.s.c.)) if $\epi f$ is a convex (respectively, closed) subset of $\H\times\R$. We denote by $\Gamma_0(\H)$ the class of all proper, convex, and lower semicontinuous functions on $\H$.  The subdifferential of $f$ is the set-valued operator $\partial f:\H\to2^\H$ defined by
\begin{equation*}
    \partial f(x)
=
\left\{
v\in\H
\;\middle|\;
f(y)\geq f(x)+\scal {y-x}{v}
\quad
\forall y\in\H
\right\}.
\end{equation*}
If $f \in \Gamma_0(\H)$, then $\partial f$ is a maximally monotone operator and $(\partial f)^{-1} = \partial f^*$, where $f^* \in \Gamma_0(\H)$ denotes the Fenchel conjugate of $f$. Furthermore, if $f$ is differentiable and its gradient $\nabla f$ is $\beta$-Lipschitz continuous, it is $\frac{1}{\beta}$-cocoercive \cite[Corollary~18.17]{bauschkebook2017}.

We conclude this section with a standard projection formula that will be used in the convergence analysis of the proposed algorithms. It follows from the usual projection formula onto a closed affine halfspace applied to the Hilbert space
$(\H,\scal{\cdot}{\cdot}_S)$; see for instance \cite[Example~29.20]{bauschkebook2017}.
\begin{lem}[General Projection onto Affine Halfspace]
\label{lem:general_projection}
Let $\varphi: \mathcal{H} \to \mathbb{R}$ be a nonconstant affine function with $\nabla \varphi \neq 0$. 
Let $H = \{ z \in \mathcal{H} : \varphi(z) \leq 0 \}$ be a halfspace, and let $\|\cdot\|_S$ be a norm induced by some $S \in \mathcal{P}(\mathcal{H})$. 
Then the projection of $x \in \mathcal{H}$ onto $H$ is given by:
\[
P^{S}_{H}(x) = x - \frac{\max\{0, \varphi(x)\}}{\|\nabla \varphi\|_{S^{-1}}^2} S^{-1} \nabla \varphi.
\]
\end{lem}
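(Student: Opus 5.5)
Since $\varphi$ is affine and continuous, write $\varphi(z)=\scal{a}{z}-b$ with $a=\nabla\varphi\in\H\setminus\{0\}$ and $b\in\R$. The plan is to rewrite $H$ as a standard halfspace in the Hilbert space $(\H,\scal{\cdot}{\cdot}_S)$. Then we apply the classical projection formula \cite[Example~29.20]{bauschkebook2017} there and translate the result back into the original inner product.

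The key identity is $\scal{a}{z}=\scal{S^{-1}a}{Sz}=\scal{S^{-1}a}{z}_S$, using that $S$ is self-adjoint and invertible. Setting $u:=S^{-1}a\neq 0$, we get
\[
H=\{z\in\H : \scal{u}{z}_S\le b\}.
\]
This is a closed halfspace in $(\H,\scal{\cdot}{\cdot}_S)$ with normal vector $u$. The projection formula in that space gives
\[
P^S_H(x)=x-\frac{\max\{0,\scal{u}{x}_S-b\}}{\|u\|_S^2}\,u .
\]
It remains to identify each term. First, $\scal{u}{x}_S-b=\scal{a}{x}-b=\varphi(x)$. Second, $\|u\|_S^2=\scal{S^{-1}a}{SS^{-1}a}=\scal{S^{-1}a}{a}=\|a\|_{S^{-1}}^2$. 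Finally, $u=S^{-1}\nabla\varphi$. Substituting these yields the claimed formula.

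For a self-contained argument that avoids the citation, one can instead check the projection characterization directly. Let $p$ denote the right-hand side. If $\varphi(x)\le 0$, then $p=x\in H$ and there is nothing to prove. Otherwise $\varphi(p)=\varphi(x)-\varphi(x)\,\scal{a}{S^{-1}a}/\|a\|^2_{S^{-1}}=0$, so $p\in H$. For any $z\in H$ we have
\[
\scal{x-p}{z-p}_S=\frac{\varphi(x)}{\|a\|^2_{S^{-1}}}\scal{a}{z-p}=\frac{\varphi(x)}{\|a\|^2_{S^{-1}}}\bigl(\varphi(z)-\varphi(p)\bigr)\le 0.
\]
This is the variational characterization of $P^S_H(x)$.

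The only subtle step is the metric bookkeeping: the gradient of $\varphi$ in the $S$-inner product is $S^{-1}a$, not $a$. This is what produces both the $S^{-1}$ in the direction and the $S^{-1}$-norm in the denominator. The remaining computations are routine.
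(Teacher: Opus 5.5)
Your proof is correct and takes the same route as the paper, which simply invokes the standard halfspace projection formula \cite[Example~29.20]{bauschkebook2017} in the Hilbert space $(\H,\scal{\cdot}{\cdot}_S)$. Your identifications $u=S^{-1}\nabla\varphi$ and $\|u\|_S^2=\|\nabla\varphi\|_{S^{-1}}^2$, together with the direct variational check you add, fill in the details the paper leaves implicit and make the argument self-contained.
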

\section{Inexact Warped Resolvent iterations}\label{sec:main}
 %\subsection{The Inexact Nonlinear Forward--Backward Map}
 In this section we introduce the inexact warped resolvent algorithm and prove its weak convergence to a solution to Problem~\ref{pro:main}. 
 
In the context of Problem~\ref{pro:main}, let $\bm{M} \colon \HH \to \HH$ and suppose that $(\bm{M}+\bm{C}+\bm{A})^{-1}$ is well-defined. Given $x \in \bm{\H}$, the warped resolvent step of the algorithm in \eqref{eq:algCombettes} is given by
\begin{equation}\label{eq:BFB}
 x_{+} = \bm{J}_{\bm A+ \bm{C}}^{\bm M}(x)=(\bm{M}+\bm{C}+\bm{A})^{-1}(\bm{M}x). 
\end{equation}
This step can be equivalently characterized by the inclusion-equation system \begin{align} \label{eq:appsol}
\textnormal{find } \quad (y,x_+)\in \HH^2 \quad \textnormal{ such that } \quad \begin{cases}
v \in \bm{A}x_+\\[1mm]
v + \bm{M}x_+ - \bm{M}x + \bm{C}x_+ = 0.
\end{cases}
\end{align}
Motivated by this formulation, and the relative-error framework introduced in \cite{Solodov1999SVA,Solodov2001}, we define the following notion of approximate solution to \eqref{eq:appsol}.
\begin{defi}\label{def:ine-sol}
A pair $(w, v) \in \HH \times \HH$ is called a \emph{$\sigma$-approximate solution} of the system \eqref{eq:appsol} if $\sigma \in [0,1[$ and
\begin{align}\label{def:sigma-solution}
\begin{cases}
v \in \bm{A}w\\
e=v + \bm{M}w - \bm{M}x + \bm{C}w,\\
\|e\|_{\bm{P}^{-1}} \leq \sigma \|w - x\|_{\bm{P}}.
\end{cases}
\end{align}
\end{defi}
When $\sigma = 0$, condition \eqref{def:sigma-solution} reduces to the exact system \eqref{eq:appsol}, and hence to exact warped resolvent step \eqref{eq:BFB}. In this sense, the residual $e$ is controlled \emph{relative} to the displacement $w - x$, in the spirit of relative-error proximal methods. This notion of approximate solution naturally leads to the inexact warped resolvent algorithm. Before presenting the algorithm, we introduce the standing assumptions ensuring its well-definedness.
   
\begin{asume}\label{asume:NFBE} In the context of Problem~\ref{pro:main} consider the following assumptions.
\begin{enumerate}
\item\label{asume:NFBE1} Let $\bm{S} \in \mathcal{P}(\HH)$ and $\bm{P} \in \mathcal{P}(\HH)$.
\item\label{asume:NFBE2} Suppose that $\bm{C}$ is $\beta$-cocoercive with respect to $\bm{P}$ for $\beta \in \RPP$.
    \item\label{asume:NFBE3} Let $(\lambda_n)_{n \in \N}$ be a sequence in $[\underline{\lambda},\overline{\lambda}]\subset~]0,2[$.
    \item \label{asume:NFBE4}For each $n \in \N$, let $\bm{M}_n\colon \HH \to \HH$ be an operator such that $\ran \bm{M}_n \subset \ran (\bm{M}_n+\bm{A}+\bm{C})$.% and $\bm{M}_n+\bm{A}+\bm{C}$ is injective. \textcolor{red}{ Hay que pensar un poco en esta condicion, quizas se puede relajar porque estamos considerando errores.}
\end{enumerate}
\end{asume}
The next algorithm is the inexact warped resolvent version of the algorithm in \cite[Eq. (4.34)]{combettes2024geometry}.
\begin{algo}\label{alg:NFBE} In the context of Problem~\ref{pro:main}  and Assumption~\ref{asume:NFBE}. Let $x_0 \in \HH$ and consider the following recurrence.
    \begin{equation}\label{eq:algWR}
	(\forall n\in\N)\quad 
	\begin{array}{l}
		\left\lfloor
		\begin{array}{l}
			%w_n = (\bm{M}_n+\bm{A}+\bm{C})^{-1}(\bm{M}_nx_n) \Leftrightarrow w_n^* \in \bm{A} w_n \\
            \textnormal{find } (w_n,v_n) \in \gra \bm{A}  \textnormal{ such that }\\
			\left\lfloor
		\begin{array}{l} w_n^* = \bm{M}_nx_n-\bm{M}_nw_n-\bm{C}w_n\\
			e_n = v_n - w_n^*\\
			\|e_n\|_{\bm{P}^{-1}}\leq \sigma \|w_n-x_n\|_{\bm{P}}
            		\end{array}
		\right.\\
			t_n^* = v_n+\bm{C}x_n\\
			\delta_n = \scal{x_n-w_n}{t_n^*}-\frac{1}{4\beta}\|w_n-x_n\|^2_{{\bm{P}}} \\
			d_n = \begin{cases}
				\dfrac{\delta_n}{\|t^*_n\|^2_{\bm{S}^{-1}}}  \bm{S}^{-1}t^*_n, &\textnormal{ if }
				\delta_n >0; \\
				0,  &\textnormal{ otherwise } 
			\end{cases}\\
			x_{n+1}=x_n-\lambda_n d_n.
		\end{array}
		\right.
	\end{array}
\end{equation}
\end{algo}

\begin{rem}\label{rem:algNFBE}
\begin{enumerate}
\item\label{rem:algNFBE0}The condition $\ran \bm{M}_n \subset \ran (\bm{M}_n+\bm{A}+\bm{C})$ in Assumption~\ref{asume:NFBE}.\ref{asume:NFBE4} guarantees that the system in \eqref{def:sigma-solution} admits an exact solution. This ensures that the iterations of Algorithm~\ref{alg:NFBE} are well-defined. Unlike the frameworks in \cite{BuiCombettesWarped2020,combettes2024geometry}, we do not require the operator $\bm{M}_n+\bm{A}+\bm{C}$ to be injective.
    \item\label{rem:algNFBE1} In the case that $\bm{S} = \Id$ and $\sigma =0$, Algorithm~\ref{alg:NFBE} coincides with the algorithm in  \cite[Eq. (4.34)]{combettes2024geometry} with $q_n \equiv x_n$. Moreover, it reduces to the algorithm proposed in \cite{BuiCombettesWarped2020} if, additionally, we set $\bm{C}=0$.  On the other hand, if $\sigma =0$ and we set $\bm{M}_n = \widetilde{\bm M}_n-\bm{C}$, where $ \widetilde{\bm M}_n \colon \HH \to \HH$ is a strongly monotone operator, Algorithm~\ref{alg:NFBE} reduces to the algorithm studied in \cite{Giselsson2021NFBS}.
    \item\label{rem:algNFBE2} Algorithm~\ref{alg:NFBE} admits the following geometric interpretation as a projection method onto a sequence of separating half-spaces. For each $n\in\N$, define the half-space
\begin{equation}\label{eq:H_n}
\bm{H}_n=\menge{z \in \HH}{\scal{z-w_n}{t_n^*}\leq \frac{1}{4\beta}\|w_n-x_n\|^2_{\bm{P}}}.
\end{equation}
Then, the update $x_{n+1}$ coincides with a relaxed projection of $x_n$ onto $\bm{H}_n$ with respect to the metric induced by $\bm S$. Moreover, the construction of $(w_n,v_n)$ together with the relative error condition, ensures that the solution set $\mathcal S$ is contained in $\bm{H}_n$. Hence $\bm{H}_n$ separates the current iterate $x_n$ from $\mathcal S$ whenever $\delta_n>0$, as shown below. Consequently, Algorithm~\ref{alg:NFBE} can be viewed as a cutting-plane method that generates a sequence of outer approximations $(\bm{H}_n)_{n\in\N}$ of the solution set and performs (relaxed) projections onto these half-spaces. This perspective is closely related to the geometric framework of projection methods for monotone inclusions; see, e.g., \cite{combettes2001fejer,combettes2024geometry,Solodov1999JCA}.
\end{enumerate}
\end{rem}%
\begin{teo}\label{teo:convergencia}
    In the context of Problem~\ref{pro:main}  and Assumption~\ref{asume:NFBE}. Let $x_0 \in \HH$ and consider the sequence $(x_n)_{n \in \N}$ generated by Algorithm~\ref{alg:NFBE}. Then the following hold:
	\begin{enumerate}
        \item\label{h_k} $\zer(\bm A+\bm C) \subset \bm{H}_n$, where $\bm{H}_n$ is defined as in \eqref{eq:H_n}.
     %   \begin{equation}\label{eq:H_n}
     %   \bm{H}_n=\menge{z \in \HH}{\scal{z-w_n}{t_n^*}\leq \frac{1}{4\beta}\|w_n-x_n\|^2_{\bm{P}}}.
     %   \end{equation}
        
		\item\label{teo:convergencia0} For each $x\in \zer(\bm A+\bm C)$, the sequence $(\|x_n-x\|_{\bm S})_{n\in\N}$ converges.
		\item\label{teo:convergencia1} $\sum_{n \in \N}  \|d_n\|^2_{\bm{S}} < +\infty$ and hence $d_n \to 0$;
		\item\label{teo:convergencia2} Assume, in addition, that one of the following conditions is satisfied:
		\begin{enumerate}
			\item\label{teo:convergencia2a}  $w_n-x_n \to 0$ and $\bm{M}_nw_n-\bm{M}_nx_n \to 0$;
			\item\label{teo:convergencia2b}  $\bm{M}_n$ is $\alpha$-strongly monotone with respect to ${\bm{P}}$ for $\alpha \in ~]1/(4\beta)+\sigma,+ \infty[$  and $\zeta$-Lipschitz continuous for $\zeta \in \RPP$.
		\end{enumerate}
		Then, $(x_n)_{n \in \N}$ converges weakly to a point in $\mathcal{S}$.
	\end{enumerate}
\end{teo}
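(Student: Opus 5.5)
For part \ref{h_k}, I take $x\in\zer(\bm A+\bm C)$, so that $-\bm Cx\in\bm A x$. Since $v_n\in\bm A w_n$, monotonicity of $\bm A$ gives $\scal{w_n-x}{v_n+\bm Cx}\ge 0$. I then split
\[
t_n^*=v_n+\bm Cx_n=(v_n+\bm Cx)+(\bm Cx_n-\bm Cx),
\]
which reduces the claim $\scal{x-w_n}{t_n^*}\le \frac{1}{4\beta}\|w_n-x_n\|_{\bm P}^2$ to a bound on $\scal{x-w_n}{\bm Cx_n-\bm Cx}$. I write $x-w_n=(x-x_n)+(x_n-w_n)$. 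Cocoercivity with respect to $\bm P$ gives $\scal{x-x_n}{\bm Cx_n-\bm Cx}\le-\beta\|\bm Cx_n-\bm Cx\|_{\bm P^{-1}}^2$. For the other term, Cauchy--Schwarz in the pair $\bm P,\bm P^{-1}$ and Young's inequality give
\[
\scal{x_n-w_n}{\bm Cx_n-\bm Cx}\le \beta\|\bm Cx_n-\bm Cx\|_{\bm P^{-1}}^2+\tfrac{1}{4\beta}\|x_n-w_n\|_{\bm P}^2.
\]
Summing the two bounds proves the inclusion. This argument does not use the error condition at all.

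For parts \ref{teo:convergencia0} and \ref{teo:convergencia1}, Lemma~\ref{lem:general_projection} shows that $x_n-d_n=P^{\bm S}_{\bm H_n}x_n$ (with $d_n=0$ when $x_n\in\bm H_n$; note $t_n^*\neq0$ whenever $\delta_n>0$). Hence $x_{n+1}$ is a relaxed projection onto a closed convex set containing $\mathcal S$. The standard relaxed-projection identity in $(\HH,\scal{\cdot}{\cdot}_{\bm S})$ then gives
\[
\|x_{n+1}-x\|_{\bm S}^2\le\|x_n-x\|_{\bm S}^2-\lambda_n(2-\lambda_n)\|d_n\|_{\bm S}^2 .
\]
This yields Fejér monotonicity, convergence of $(\|x_n-x\|_{\bm S})_{n\in\N}$, boundedness, and $\sum_{n\in\N}\|d_n\|_{\bm S}^2<+\infty$, because $\lambda_n(2-\lambda_n)\ge\underline\lambda(2-\overline\lambda)>0$.

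For part \ref{teo:convergencia2}, I will apply Opial's lemma in the $\bm S$-metric. It then suffices to show that every weak cluster point of $(x_n)_{n\in\N}$ lies in $\mathcal S$.

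Under \ref{teo:convergencia2a}, $w_n\rightharpoonup \bar x$ along the subsequence. I set $u_n:=v_n+\bm Cw_n\in(\bm A+\bm C)w_n$ and note that $u_n=e_n+\bm M_nx_n-\bm M_nw_n$. The error condition gives $\|e_n\|_{\bm P^{-1}}\le\sigma\|w_n-x_n\|_{\bm P}\to0$, so $u_n\to0$ strongly. Since $\bm A+\bm C$ is maximally monotone ($\bm C$ is cocoercive, hence continuous and monotone), its graph is sequentially weak--strong closed, so $\bar x\in\mathcal S$.

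Under \ref{teo:convergencia2b}, I must show that $w_n-x_n\to0$; the second condition of \ref{teo:convergencia2a} then follows from the $\zeta$-Lipschitz continuity of $\bm M_n$. I expect this to be the main obstacle. The plan is to bound $\delta_n$ from below. Using $t_n^*=e_n+\bm M_nx_n-\bm M_nw_n+\bm Cx_n-\bm Cw_n$ and the monotonicity of $\bm C$, I get
\[
\scal{x_n-w_n}{t_n^*}\ge \alpha\|x_n-w_n\|_{\bm P}^2-\sigma\|x_n-w_n\|_{\bm P}^2 .
\]
Therefore $\delta_n\ge(\alpha-\sigma-\tfrac1{4\beta})\|x_n-w_n\|_{\bm P}^2>0$ whenever $w_n\neq x_n$. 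In addition, $\|t_n^*\|$ is at most a constant times $\|x_n-w_n\|$, by the Lipschitz continuity of $\bm M_n$ and of $\bm C$ (which is $\beta^{-1}$-Lipschitz in the $\bm P$-metrics) together with the error bound. It follows that
\[
\|d_n\|_{\bm S}=\frac{\delta_n}{\|t_n^*\|_{\bm S^{-1}}}\ge c\,\|x_n-w_n\|
\]
for some $c>0$, after converting between $\bm P$, $\bm S$ and the identity norms via \eqref{eq:lambd}. Since $d_n\to0$ by part \ref{teo:convergencia1}, this gives $w_n-x_n\to0$, and the argument for \ref{teo:convergencia2a} applies.
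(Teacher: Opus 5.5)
Your proposal is correct and follows essentially the same route as the paper. It uses the same monotonicity/cocoercivity/Young splitting to show $\zer(\bm A+\bm C)\subset\bm H_n$, and the relaxed-projection Fej\'er inequality for the convergence of $(\|x_n-x\|_{\bm S})_{n\in\N}$ and the summability of $(\|d_n\|_{\bm S}^2)_{n\in\N}$. Under (a) it uses weak--strong closedness of $\bm A+\bm C$ together with Opial's lemma. Under (b) it combines the lower bound $\delta_n\ge(\alpha-\sigma-\frac{1}{4\beta})\|x_n-w_n\|_{\bm P}^2$ with a linear upper bound on $\|t_n^*\|_{\bm S^{-1}}$, so that $d_n\to0$ forces $x_n-w_n\to0$.
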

\begin{proof} Fix $n \in \N$. It follows from \eqref{eq:algWR} and Lemma~\ref{lem:general_projection} that 
	\begin{equation}\label{eq:proofteocon0}
		x_{n+1} = x_n + \lambda_n(P_{\bm{H}_n}^{\bm{S}}(x_n)-x_n).
	\end{equation}
	Moreover, for any $z \in \mathcal{S}$, we have $-\bm{C}z \in \bm{A}z$ and from \eqref{eq:algWR} that $v_n \in \bm{A} w_n$. Therefore, by the monotonicity of $\bm{A}$ we deduce that 
	\begin{equation}\label{eq:proofteocon1}
		\scal{v_n+\bm{C}z}{w_n-z}\geq 0.
	\end{equation}
	Furthermore, by \eqref{eq:proofteocon1}, the cocoercivity of $\bm{C}$ with respect to ${\bm{P}}$, and combining Cauchy--Schwarz and Young's inequality, we deduce 
	\begin{align*}
		\scal{z-w_n}{t_n^*}  &= \scal{v_n+\bm{C}x_n}{z-w_n}\\
		& = \scal{v_n+\bm{C}x_n+\bm{C}z-\bm{C}z}{z-w_n}\\
		& = \scal{v_n+\bm{C}z}{z-w_n}+\scal{\bm{C}x_n-\bm{C}z}{z-w_n}\\
		& \leq \scal{\bm{C}x_n-\bm{C}z}{z-w_n}\\
		& = \scal{\bm{C}x_n-\bm{C}z}{z-x_n}+\scal{\bm{C}x_n-\bm{C}z}{x_n-w_n}\\
		& \leq -\beta \|\bm{C}x_n -\bm{C}z\|_{{\bm{P}}^{-1}}^2 + \frac{1}
		{4\beta}\|x_n-w_n\|^2_{\bm{P}}+\beta\|\bm{C}x_n-\bm{C}z\|^2_{{\bm{P}}^{-1}}\\
		& = \frac{1}{4\beta}\|x_n-w_n\|^2_{\bm{P}}.
	\end{align*}
	Then, $\mathcal{S} \subset \bm{H}_n$, proving \ref{h_k}. Set now $p_n=P_{\bm{H}_n}^{{{\bm{S}}}}(x_n)$, then it follows from \eqref{eq:proofteocon0} and \cite[Theorem~3.16]{bauschkebook2017} that
	\begin{align}\label{eq:keyInq}
		\|x_{n+1}-z\|^2_{\bm{S}} &= \|x_n+\lambda_n(p_n -x_n)-z\|^2_{\bm{S}} \notag \\
		&= \|x_n- z\|^2_{\bm{S}}  + 2 \lambda_n\scal{x_n-z}{p_n -x_n}_{\bm{S}} +\lambda_n^2\| p_n -x_n\|^2_{\bm{S}}  \notag  \\
		&= \|x_n- z\|^2_{\bm{S}}  + 2 \lambda_n\scal{p_n-z}{p_n -x_n}_{\bm{S}} -\lambda_n(2-\lambda_n)\| p_n -x_n\|^2_{\bm{S}}  \notag \\
		&\leq \|x_n- z\|^2_{\bm{S}}  -\lambda_n(2-\lambda_n)\| p_n -x_n\|^2_{\bm{S}}  \notag \\
		&= \|x_n- z\|^2_{\bm{S}}  -\lambda_n(2-\lambda_n)\| d_n\|^2_{\bm{S}}.
	\end{align}
	Hence, \ref{teo:convergencia0} follows from \cite[Lemma~5.31]{bauschkebook2017} which in addition yields $\sum_{n \in \N} {\lambda}_n (2-{\lambda}_n)\|d_n\|^2_{\bm{S}} < +\infty$. Since $\inf {\lambda}_n >0$ and $\sup {\lambda}_n < 2$, we conclude \ref{teo:convergencia1}. 
	
	Suppose that \ref{teo:convergencia2a} holds. In particular, we have that $e_n \to 0$. Moreover, it follows from \eqref{eq:algWR} that
	\begin{equation}\label{eq:w_n}
		\bm{M}_nx_n-\bm{M}_nw_n-\bm{C}w_n+e_n = v_n \in \bm{A}w_n.
	\end{equation} 
	 Hence, given a weak limit point $x^*$ of $(x_{n})_{n \in \N}$, say $x_{n_k} \weak x^*$, we have $w_{n_k}\weak x^*$ and $\bm{M}_{n_k}w_{n_k}-\bm{M}_{n_k}x_{n_k} +e_{n_k}\to 0$. Therefore, by the weak-strong closure of the maximally monotone operator $\bm{A}+\bm{C}$ \cite[Corollary~25.5 \& Proposition~20.38]{bauschkebook2017}, we conclude that  $0 \in (\bm{A}+\bm{C})x^*$ and the result follows from \cite[Lemma~2.47]{bauschkebook2017}. 
Assume that \ref{teo:convergencia2b} holds. We claim that
\begin{equation}\label{eq:e_k}
x_n-w_n \to 0
\qquad\text{and}\qquad
e_n\to 0.    
\end{equation}
First, from \ref{teo:convergencia1} we know that $d_n\to 0$. 
Next, recall that
\begin{equation*}
 t_n^*=v_n+\bm Cx_n
=e_n+\bm M_nx_n-\bm M_nw_n-\bm Cw_n+\bm Cx_n.   
\end{equation*}
Hence, by the $\zeta$-Lipschitz continuity of $\bm M_n$, the
$\frac1\beta$-Lipschitz continuity of $\bm C$ with respect to $\bm{P}$,  and the 
%we obtain
%\[
%\|t_n^*\|
%\le
%\zeta\|x_n-w_n\|+\frac1\beta\|x_n-w_n\|+\|e_n\|.
%\]
%By 
equivalence of norms \eqref{eq:lambd}, we have %there exists a constant $\lambda>0$ such that
%\begin{equation}\label{eq:t_k}
%\|t_n^*\|_{\bm S^{-1}}
%\le
%\kappa\Bigl(\zeta+\frac1\beta+\sigma\Bigr)\|x_n-w_n\|_{\bm P}.
%\end{equation}
%
\begin{align}\label{eq:t_k}
\|t_n^*\|_{\bm S^{-1}}
&\le \sqrt{\lambda_{\max}(\bm S^{-1})}
\left(
\|e_n\|+\|\bm M_nx_n-\bm M_nw_n\|+\|\bm Cx_n-\bm Cw_n\|
\right) \notag \\
&\le
\sqrt{\lambda_{\max}(\bm S^{-1})}
\left(
{\frac{\sigma}{\sqrt{\lambda_{\min}(\bm P^{-1})}}
+\frac{\zeta}{\sqrt{\lambda_{\min}(\bm P)}}+\frac{1}{\beta\sqrt{\lambda_{\max}(\bm P^{-1})}}}
\right)
\|x_n-w_n\|_{\bm P}.
\end{align}
On the other hand, in view of \eqref{eq:algWR}, the $\beta$-cocoercivity of $\bm{C}$ with respect to ${\bm{P}}$, and $\alpha$-strong monotonicity of $\bm{M}_n$, we deduce
   \begin{align}\label{eq:del_k}
		\delta_n &=\scal{x_n-w_n}{t_n^*}-\frac{1}{4\beta}\|x_n-w_n\|^2_{{\bm{P}}} \notag \\
		&=\scal{x_n-w_n}{e_n+\bm{M}_nx_n-\bm{M}_nw_n-\bm{C}w_n+\bm{C}x_n}-\frac{1}{4\beta}\|x_n-w_n\|^2_{{\bm{P}}}\notag \\
		& \geq\left(\alpha-\frac{1}{4\beta}\right)\|x_n-w_n\|^2_{\bm{P}} - \|x_n-w_n\|_{\bm{P}}\|e_n\|_{\bm{P}^{-1}}+\beta\|\bm{C}x_n-\bm{C}w_n\|^2_{{\bm{P}}^{-1}}\notag \\
		& \geq \left(\alpha-\frac{1}{4\beta}\right)\|x_n-w_n\|^2_{\bm{P}}-\sigma\|x_n-w_n\|^2_{\bm{P}}\notag \\
		& = \left(\alpha-\frac{1}{4\beta}-\sigma\right)\|x_n-w_n\|^2_{\bm{P}}.
	\end{align} 
%Since $\alpha>\sigma+\frac1{4\beta}$, it follows that there exists $\mu>0$ such that
%\begin{equation}\label{eq:del_k}
% \delta_n\ge \mu \|x_n-w_n\|_{\bm P}^2.   
%\end{equation}
%
In particular, if $\delta_n=0$ we have $x_n-w_n=0$. Otherwise, if $\delta_n>0$, from \eqref{eq:algWR}
%$$
%d_n=\frac{\delta_n}{\|t_n^*\|_{\bm S^{-1}}^2}\,\bm S^{-1}t_n^*,
%$$
%and therefore
\begin{equation}\label{eq:d_k}
 \|d_n\|_{\bm S} = \frac{\delta_n}{\|t_n^*\|_{\bm S^{-1}}}.   
\end{equation}
%{\color{blue}(If $\delta_n\le 0$, then necessarily $d_n=0$, and the estimate below is trivial)}.
Combining \eqref{eq:t_k}, \eqref{eq:del_k}, and \eqref{eq:d_k}, we obtain
\begin{equation}\label{eq:d_S}
 \|d_n\|_{\bm S} = \frac{\delta_n}{\|t_n^*\|_{\bm S^{-1}}}
\ge \frac{\alpha-\frac{1}{4\beta}-\sigma}{\sqrt{\lambda_{\max}(\bm S^{-1})}\left(
{\frac{(\sigma+1/\beta)}{\sqrt{\lambda_{\min}(\bm P^{-1})}}
+\frac{\zeta}{\sqrt{\lambda_{\min}(\bm P)}}}
\right)}\|x_n-w_n\|_{\bm P}.   
\end{equation}
Since $d_n\to 0$ by \ref{teo:convergencia1}, and $\alpha>\sigma+\frac1{4\beta}$, it follows that $\|x_n-w_n\|_{\bm P}\to 0$, and hence $x_n-w_n\to 0$. Moreover, from the relative error condition in Algorithm~\ref{alg:NFBE}, we also conclude that $e_n\to 0$. Finally, by the $\zeta$-Lipschitz continuity of $\bm M_n$,
$$
\|\bm M_nw_n-\bm M_nx_n\| \le \zeta\|w_n-x_n\|\to 0.
$$
Thus condition \ref{teo:convergencia2a} holds, and the conclusion follows from the previous step.
\end{proof}
%Theorem~\ref{teo:convergencia} can be viewed as an inexact, metric-induced version of \cite[Theorem~4.8]{combettes2024geometry}.
\begin{rem}\label{rem:teomain} \begin{enumerate}
    \item\label{rem:teomain1} In view of Remark~\ref{rem:algNFBE}.\ref{rem:algNFBE1},
    Theorem~\ref{teo:convergencia} is closely related to \cite[Theorem~4.8]{combettes2024geometry} and \cite[Theorem~5.1]{Giselsson2021NFBS}, and can be interpreted as their inexact, relative error counterpart. Moreover, if $\bm{C}=0$, Algorithm~\ref{alg:NFBE} reduces to a variable-metric version of \cite[Algorithm~1.1]{Solodov1999JCA}. Consequently, Theorem~\ref{teo:convergencia} can be viewed as a variable-metric extension of the corresponding convergence results in \cite{Solodov1999JCA}.
    \item\label{rem:teomain2} Note that under assumption \ref{teo:convergencia2b} of Theorem~\ref{teo:convergencia}, $J_{\bm{A}+\bm{C}}^{\bm M_n}$ has full domain and it is single-valued for each $n \in \N$ \cite[Proposition~4.1]{Giselsson2021NFBS}. Therefore, the system in \eqref{def:sigma-solution} admits an exact solution and Algorithm~\ref{alg:NFBE} is well-defined. Particularly, Assumption~\ref{asume:NFBE}.\ref{asume:NFBE4} holds directly.
    \item If there exists $m \in \N$ such that $\delta_m \leq 0$, it follows from \eqref{eq:del_k} that $x_m = w_m$. Furthermore, from \eqref{eq:algWR}, it follows that $e_m = 0$, $w_m^* = v_m = -\bm Cx_m$, and $-\bm{C}x_m \in \bm{A}x_m$. Therefore, $x_m \in \mathcal{S}$ and $x_n = x_m$ for every $n \geq m$. Hence, the condition $\delta_m \leq 0$ can be used as a stopping criterion for Algorithm~\ref{alg:NFBE}.
\end{enumerate} 
\end{rem}
\subsection{Inexact warped resolvent with explicit steps.} In this section, we derive an explicit version of Algorithm~\ref{alg:NFBE} by imposing the following additional assumptions on the operators $\bm{C}$ and $\bm{M}_n$.
\begin{asume}\label{asume:NFBEE} In the context of Problem~\ref{pro:main}, let $\bm{S}\in \mathcal{P}(\H)$, let $\gamma \in \RPP$, suppose that there exists $(\gamma_n)_{n \in \N} \subset [\gamma,+\infty[$ such that $\gamma_n (\bm{M}_n+\bm{C})-{\bm{S}}$ is $\zeta_n$-Lipschitz with respect to $\bm{S}$ for $\zeta_n \in [0,\zeta]$ and $\zeta \in~]0,1[$. Moreover,  suppose that $\bm{C}$ is $\beta$-cocoercive with respect to $\bm{S}$ for $\beta \in \RPP$.
\end{asume}
In order to reformulate Algorithm~\ref{alg:NFBE} with explicit steps, we introduce the following definition for $\lambda_n$. Fix $n \in \N$ and let $t_n^*$, $\delta_n$, and $d_n$ be defined as in \eqref{eq:algWR}. Then, for $\epsilon \in~]0,2[$, we set
\begin{equation} \label{eq:deflam}
    \lambda_n = \begin{cases}
        \gamma_n \dfrac{\|t_n^*\|^2_{{\bm{S}}^{-1}}}{\delta_n}, &\text{if } \delta_n > 0;\\
        \epsilon, &\text{otherwise}.
    \end{cases} 
\end{equation} 
With this choice, Algorithm~\ref{alg:NFBE} reduces to the following explicit form.
\begin{algo}\label{alg:NFBEE} In the context of Problem~\ref{pro:main}  and Assumption~\ref{asume:NFBEE}. Let $x_0 \in \HH$, $\sigma \in [0,1[$ and consider the following recurrence.
    \begin{equation}\label{eq:algWRE}
	(\forall n\in\N)\quad 
	\begin{array}{l}
		\left\lfloor
		\begin{array}{l}
			%w_n = (\bm{M}_n+\bm{A}+\bm{C})^{-1}(\bm{M}_nx_n) \Leftrightarrow w_n^* \in \bm{A} w_n \\
            \textnormal{ find } (w_n,v_n) \in \gra\bm{A}
            \textnormal{ such that }\\
			\left\lfloor
		\begin{array}{l} w_n^* = \bm{M}_nx_n-\bm{M}_nw_n-\bm{C}w_n,\\
       e_n = v_n - w_n^*,\\        
           \|e_n\|_{\bm{S}^{-1}}\leq \sigma \|w_n-x_n\|_{\bm{S}},	
		\end{array}
		\right.\\
		%		e_n = v_n - w_n^*\\
		%	\|e_n\|_{\bm{S}^{-1}}\leq \sigma \|w_n-x_n\|_{\bm{S}}\\
			%t_n^* = v_n+\bm{C}x_n\\
			%\delta_n = \scal{x_n-w_n}{t_n^*}-\frac{1}{4\beta}\|w_n-x_n\|^2_{{\bm{P}}} \\
			%d_n = \begin{cases}
			%	\dfrac{\delta_n}{\|t^*_n\|^2_{\bm{S}^{-1}}}  \bm{S}^{-1}t^*_n, &\textnormal{ if }
			%	\delta_n >0; \\
		%		0,  &\textnormal{ otherwise } 
		%	\end{cases}\\
			x_{n+1}=x_n-\gamma_n \bm{S}^{-1}(v_n+\bm{C}x_n).
		\end{array}
		\right.
	\end{array}
\end{equation}
\end{algo}
\begin{rem}\label{rem:algexplicit}
\begin{enumerate}
    \item\label{rem:algexplicit1} In view of \cite[Proposition~2.1]{MorinBanertGiselsson2022}, Assumption~\ref{asume:NFBEE} guarantees that $\bm M_n+\bm C$ is $(1-\zeta_n)/\gamma_n$-strongly monotone with respect to $\bm{S}$, for every $n \in \N$. Then,  $J_{\bm{A}+\bm{C}}^{\bm M_n}$ has full domain and it is single-valued \cite[Proposition~4.1]{Giselsson2021NFBS}. Therefore, the inclusion in \eqref{eq:algWRE}  admits a solution for $e_n = 0$ and Algorithm~\ref{alg:NFBEE} is well-defined. 
    \item\label{rem:algexplicit2} Consider Algorithm~\ref{alg:NFBEE} when $\bm{C} = 0$, $\bm{M}_n=\bm{S}$, $v_n = \bm{S}\widetilde{v}_n$, and $e_n  = \gamma_n v_n -w_n^*$, for every $n \in \N$. In that case, \eqref{eq:algWRE} reduces to
    \begin{equation}
	(\forall n\in\N)\quad 
	\begin{array}{l}
		\left\lfloor
		\begin{array}{l}
			%w_n = (\bm{M}_n+\bm{A}+\bm{C})^{-1}(\bm{M}_nx_n) \Leftrightarrow w_n^* \in \bm{A} w_n \\
            \textnormal{ find } (w_n,\widetilde{v}_n) \in \HH \times \HH \textnormal{ such that }\\
			\left\lfloor
		\begin{array}{l} 
        \bm{S}\widetilde{v}_n \in \bm{A} w_n\\
           \|\gamma_n \widetilde{v}_n -x_n+w_n\|_{\bm{S}}\leq \sigma \|w_n-x_n\|_{\bm{S}},	
		\end{array}
		\right.\\
			x_{n+1}=x_n-\gamma_n \widetilde{v}_n.
		\end{array}
		\right.
	\end{array}
    \end{equation}
This scheme was studied in \cite{AlvesLorenzNaldi2026} when the operator $\bm{S}$ is self-adjoint and positive semidefinite. In that context, the authors derive inexact versions of Chambolle--Pock \cite{chambolle2016AN} and Davis--Yin \cite{DavisYin2017} algorithms. Although this structure arises as an instance of Algorithm~\ref{alg:NFBEE}, we restrict our analysis to the case where $\bm{S}$ is positive definite.
\end{enumerate}
\end{rem}
\begin{teo}\label{teo:NFBEE}
 In the context of Problem~\ref{pro:main}  and Assumption~\ref{asume:NFBEE}. Let $x_0 \in \HH$ and consider the sequence $(x_n)_{n \in \N}$ generated by Algorithm~\ref{alg:NFBEE}.
Moreover, suppose that there exist $\varepsilon \in \RPP$ and $\delta \in ]0,1[$ such that \begin{equation}\label{eq:stepsizes}
				(\forall n \in \N) \quad  1-\varepsilon-(\zeta_n+\gamma_n\sigma)^2 \geq \delta \quad \textnormal{ and } \quad 2\beta \varepsilon \geq \gamma_n.
			\end{equation}
Then, $(x_n)_{n \in \N}$ converges weakly to a point in $\mathcal{S}$.
\end{teo}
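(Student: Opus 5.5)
The plan is to show that Algorithm~\ref{alg:NFBEE} is exactly Algorithm~\ref{alg:NFBE} with $\bm{P}=\bm{S}$ and relaxation parameters $\lambda_n$ given by \eqref{eq:deflam}. The proof then reduces to three things: checking that these $\lambda_n$ lie in a fixed interval $[\underline\lambda,\overline\lambda]\subset\,]0,2[$, applying Theorem~\ref{teo:convergencia}\ref{teo:convergencia1}, and verifying condition \ref{teo:convergencia2a} of Theorem~\ref{teo:convergencia}.

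\emph{Step 1 (key algebraic identity).} Fix $n$ and write $u_n=x_n-w_n$ and $\bm R_n=\gamma_n(\bm M_n+\bm C)-\bm S$. From \eqref{eq:algWRE},
\[
t_n^*=v_n+\bm C x_n=e_n+(\bm M_n+\bm C)x_n-(\bm M_n+\bm C)w_n .
\]
Hence
\[
\gamma_n t_n^*=\bm S u_n+r_n,\qquad r_n:=\gamma_n e_n+\bm R_nx_n-\bm R_nw_n .
\]
By Assumption~\ref{asume:NFBEE} and the relative-error condition,
\[
\|r_n\|_{\bm S^{-1}}\le \rho_n\|u_n\|_{\bm S},\qquad \rho_n:=\zeta_n+\gamma_n\sigma .
\]
By \eqref{eq:stepsizes}, $\rho_n^2\le 1-\varepsilon-\delta$, so $\rho_n\le\bar\rho:=\sqrt{1-\delta}<1$.

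Using $\gamma_n\delta_n=\scal{u_n}{\gamma_nt_n^*}-\frac{\gamma_n}{4\beta}\|u_n\|_{\bm S}^2$, expanding $\gamma_n^2\|t_n^*\|_{\bm S^{-1}}^2=\|u_n\|_{\bm S}^2+2\scal{u_n}{r_n}+\|r_n\|^2_{\bm S^{-1}}$, and then using $\gamma_n/(2\beta)\le\varepsilon$, I expect
\[
\gamma_n^2\|t_n^*\|^2_{\bm S^{-1}}-2\gamma_n\delta_n=\|r_n\|^2_{\bm S^{-1}}-\|u_n\|^2_{\bm S}+\tfrac{\gamma_n}{2\beta}\|u_n\|^2_{\bm S}\le-\delta\|u_n\|_{\bm S}^2 .
\]
This inequality is the main point. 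It immediately gives $\delta_n>0$ whenever $u_n\neq0$. If instead $u_n=0$, then $e_n=0$ and $t_n^*=0$, so both algorithms leave $x_{n+1}=x_n$ (the choice $\lambda_n=\epsilon$ is then irrelevant).

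\emph{Step 2 (bounds on $\lambda_n$).} Assume $\delta_n>0$.

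For the upper bound, note that $\gamma_n\|t_n^*\|_{\bm S^{-1}}\le(1+\bar\rho)\|u_n\|_{\bm S}\le 2\|u_n\|_{\bm S}$. Step~1 then gives $2\gamma_n\delta_n\ge(1+\delta/4)\gamma_n^2\|t_n^*\|^2_{\bm S^{-1}}$, that is,
\[
\lambda_n\le \frac{2}{1+\delta/4}<2 .
\]

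For the lower bound, we have $\gamma_n\|t_n^*\|_{\bm S^{-1}}\ge(1-\bar\rho)\|u_n\|_{\bm S}$ and $\gamma_n\delta_n\le(1+\bar\rho)\|u_n\|^2_{\bm S}$, so
\[
\lambda_n\ge\frac{(1-\bar\rho)^2}{1+\bar\rho}>0 .
\]
Together with $\epsilon\in\,]0,2[$ in the degenerate case, this yields Assumption~\ref{asume:NFBE}\ref{asume:NFBE3}.

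With $\bm P=\bm S$, cocoercivity with respect to $\bm S$ is Assumption~\ref{asume:NFBE}\ref{asume:NFBE2}. Assumption~\ref{asume:NFBE}\ref{asume:NFBE4} holds by Remark~\ref{rem:algexplicit}\ref{rem:algexplicit1}. Finally, $x_n-\lambda_nd_n=x_n-\gamma_n\bm S^{-1}t_n^*$, so the iterates coincide with those of Algorithm~\ref{alg:NFBE}.

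\emph{Step 3 (condition \ref{teo:convergencia2a}).} By Theorem~\ref{teo:convergencia}\ref{teo:convergencia1}, $d_n\to0$. Since $d_n=\lambda_n^{-1}\gamma_n\bm S^{-1}t_n^*$ and $\lambda_n<2$,
\[
\|d_n\|_{\bm S}\ge \tfrac12\gamma_n\|t_n^*\|_{\bm S^{-1}}\ge\tfrac{1-\bar\rho}{2}\|u_n\|_{\bm S},
\]
so $w_n-x_n\to0$.

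Next,
\[
\bm M_nw_n-\bm M_nx_n=\gamma_n^{-1}\bigl(\bm S(w_n-x_n)+\bm R_nw_n-\bm R_nx_n\bigr)-(\bm Cw_n-\bm Cx_n).
\]
Because $\gamma_n\ge\gamma$, $\zeta_n\le1$, and $\bm C$ is $1/\beta$-Lipschitz (by cocoercivity), this difference is bounded by a constant times $\|u_n\|_{\bm S}$, which tends to $0$.

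Hence \ref{teo:convergencia2a} holds, and Theorem~\ref{teo:convergencia}\ref{teo:convergencia2} gives weak convergence of $(x_n)_{n\in\N}$ to a point in $\mathcal S$. The delicate part of the whole argument is the inequality in Step~1, which is where both conditions in \eqref{eq:stepsizes} are used; everything after it is routine norm-equivalence bookkeeping.
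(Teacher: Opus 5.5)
Your proof is correct and follows essentially the same route as the paper. You realize Algorithm~\ref{alg:NFBEE} as Algorithm~\ref{alg:NFBE} with $\bm P=\bm S$ and $\lambda_n$ from \eqref{eq:deflam}, and you confine $\lambda_n$ to a subinterval of $]0,2[$ by expanding $\|\gamma_n t_n^*-\bm S(x_n-w_n)\|_{\bm S^{-1}}^2\le(\zeta_n+\gamma_n\sigma)^2\|x_n-w_n\|_{\bm S}^2$ together with $\gamma_n\le 2\beta\varepsilon$. You then use $d_n\to0$ to get $x_n-w_n\to 0$ and conclude via condition \ref{teo:convergencia2a}, exactly as the paper does.

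The differences are minor. You absorb the $\delta$-slack through $\gamma_n\|t_n^*\|_{\bm S^{-1}}\le 2\|x_n-w_n\|_{\bm S}$, which gives $\lambda_n\le 2/(1+\delta/4)$. The paper instead uses the $(2/\gamma_n)$-Lipschitz bound on $\bm M_n+\bm C$, which gives $\lambda_n<2-\delta/2$. You also derive the strong-monotonicity and Lipschitz facts directly from $\gamma_n(\bm M_n+\bm C)=\bm S+\bm R_n$ rather than citing them. This has the side benefit of making explicit that the $\lambda_n=\epsilon$ branch occurs only when $x_n=w_n$.
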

\begin{proof}
Let $\epsilon \in ]0,2[$. For each $n \in \N$, define $t_n^*$, $\delta_n$, and $d_n$ as in \eqref{eq:algWR} with $\bm{P}=\bm{S}$, and  $\lambda_n$ as in \eqref{eq:deflam}. Hence, Algorithm~\ref{alg:NFBEE} is a particular instance of Algorithm~\ref{alg:NFBE}. Let us now verify the conditions of Theorem~\ref{teo:convergencia}.\ref{teo:convergencia2a} to conclude the result.  First, we prove that $\inf \lambda_n > 0$ and $\sup \lambda_n <2$. Fix $n \in \N$. By the strong monotonicity of $\bm{M}_n+\bm{C}$ (Remark~\ref{rem:algexplicit}.\ref{rem:algexplicit1}), we have
	\begin{align*}
		\lambda_n= \gamma_n \dfrac{\|t_n^*\|^2_{{\bm{S}}^{-1}}}{\delta_n}&\geq \gamma_n  \dfrac{\|t_n^*\|^2_{{\bm{S}}^{-1}}}{\scal{x_n-w_n}{t_n^*}}\\
		&\geq \gamma_n  \dfrac{\|t_n^*\|_{{\bm{S}}^{-1}}}{\|x_n-w_n\|_{{\bm{S}}}}\\
		&=\gamma_n  \dfrac{\|e_n+(\bm{M}_n+\bm{C})x_n-(\bm{M}_n+\bm{C})w_n\|_{{\bm{S}}^{-1}}}{\|x_n-w_n\|_{{\bm{S}}}}\\
		&\geq\gamma_n  \dfrac{\|(\bm{M}_n+\bm{C})x_n-(\bm{M}_n+\bm{C})w_n\|_{{\bm{S}}^{-1}}-\|e_n\|_{\bm{S}^{-1}}}{\|x_n-w_n\|_{{\bm{S}}}}\\
		&\geq 1-\zeta_n-\gamma_n\sigma\\
		&\geq 1-\sqrt{1-\delta}.
	\end{align*}
	Therefore, $\inf \lambda_n \geq \min\{\epsilon, 1 -\sqrt{1-\delta}\} > 0$. On the other hand, the $(2/\gamma_n)$-Lipschitz continuity of $\bm{M}_n+\bm{C}$ \cite[Proposition~2.1]{MorinBanertGiselsson2022} and \eqref{eq:algWRE} yield
    %\cite[Proposition~2.1]{MorinBanertGiselsson2022} that $\bm{M}_n+\bm{C}$ is $(2/\gamma_n)$-Lipschitz with respect to $\bm{S}$. Hence,
	\begin{equation}\label{eq:proofcon2d1}
		\scal{x_n-w_n}{t_n^*} = \scal{x_n-w_n}{(\bm{M}_n+\bm{C})x_n-(\bm{M}_n+\bm{C})w_n+e_n} \leq \frac{2+\gamma_n\sigma}{\gamma_n}\|x_n-w_n\|^2_{{\bm{S}}}.
	\end{equation}
	Then, it follows from \eqref{eq:proofcon2d1}, \eqref{eq:stepsizes}, the Lipschitzian property of $(\gamma_n\bm{M}_n+\gamma_n\bm{C}-{\bm{S}})$, and \eqref{eq:algWRE} that

    \begin{align*}
(1-\varepsilon)\|x_n-w_n\|_{\bm S}^2
&-\frac{\gamma_n\delta}{2+\gamma_n\sigma}
\scal{x_n-w_n}{t_n^*}  \\
&\geq (1-\varepsilon-\delta)\|x_n-w_n\|_{\bm S}^2\\
&\geq (\zeta_n+\gamma_n\sigma)^2\|x_n-w_n\|_{\bm S}^2\\
&{\geq
\Big(
\|(\gamma_n\bm M_n+\gamma_n\bm C-\bm S)x_n
-(\gamma_n\bm M_n+\gamma_n\bm C-\bm S)w_n\|_{\bm S^{-1}}
+\gamma_n\|e_n\|_{\bm S^{-1}}
\Big)^2}\\
&\geq
\|\gamma_ne_n
+(\gamma_n\bm M_n+\gamma_n\bm C-\bm S)x_n
-(\gamma_n\bm M_n+\gamma_n\bm C-\bm S)w_n
\|_{\bm S^{-1}}^2\\
&=
\|\gamma_n t_n^*-\bm S(x_n-w_n)\|_{\bm S^{-1}}^2\\
&=
\|\gamma_n t_n^*\|_{\bm S^{-1}}^2
-2\gamma_n\langle x_n-w_n,t_n^*\rangle
+\|x_n-w_n\|_{\bm S}^2.
\end{align*}
	which implies
	\begin{equation}\label{eq:proofcon2d2}
		\left(2-\frac{\delta}{2+\gamma_n\sigma}\right)\gamma_n\scal{x_n-w_n}{t_n^*}-\varepsilon\|x_n-w_n\|^2_{{\bm{S}}}>\|\gamma_n t_n^*\|^2_{{\bm{S}}^{-1}}.
	\end{equation}
	Moreover, since ${\varepsilon}/{2}\geq {\gamma_n}/{(4\beta)}$  (see \eqref{eq:stepsizes}) and $ -\left(2-{\delta}/{(2+\gamma_n\sigma)}\right)  > - 2 $, we have 
	\begin{align}\label{eq:proofcon2d3}
    \left(2-\frac{\delta}{2+\gamma_n\sigma}\right)\gamma_n\delta_n&=
		\left(2-\frac{\delta}{2+\gamma_n\sigma}\right)\left(\gamma_n\scal{x_n-w_n}{t_n^*}-\frac{\gamma_n}{4\beta}\|x_n-w_n\|^2_{{\bm{S}}}\right)\nonumber\\
		& \geq   
		\left(2-\frac{\delta}{2+\gamma_n\sigma}\right)\left(\gamma_n\scal{x_n-w_n}{t_n^*}- \frac{\varepsilon}{2}\|x_n-w_n\|^2_{{\bm{S}}}\right)\nonumber\\
		&>    
		\left(2-\frac{\delta}{2+\gamma_n\sigma}\right)\gamma_n\scal{x_n-w_n}{t_n^*}- \varepsilon\|x_n-w_n\|^2_{{\bm{S}}}.
	\end{align}
Combining \eqref{eq:proofcon2d2} and \eqref{eq:proofcon2d3}, we obtain
\begin{equation}\label{eq:lambdam21}
    \left(2-\frac{\delta}{2+\gamma_n\sigma}\right)\delta_n >\gamma_n \| t_n^*\|_{\bm S^{-1}}^2.
\end{equation}
Moreover, since $\gamma_n \sigma \geq0$, we deduce that
\begin{equation}\label{eq:lambdam22}
    -\frac{\delta}{2+\gamma_n\sigma} \leq -\frac{\delta}{2}.
\end{equation}
Hence, \eqref{eq:lambdam21} and \eqref{eq:lambdam22} yield
\begin{equation*} \lambda_n = \frac{\gamma_n \|t_n^*\|_{\bm S^{-1}}^2}{\delta_n}< 2-\frac{\delta}{2+\gamma_n\sigma}
\leq 2-\frac{\delta}{2}.
\end{equation*}
Therefore,
\begin{equation*}
\sup\lambda_n\leq \max\{\epsilon,2-\delta/2\}<2.
\end{equation*}
Now, by the $(1-\zeta_n)/\gamma_n$-strong monotonicity of ($\bm{M}_n+\bm{C})$ with respect to $\bm{S}$, from \eqref{eq:algWR} we have %$\lambda_n d_n = \gamma_n {\bm{S}}^{-1}t_n^*$ and
	\begin{align*}
		\lambda_n\|d_n\|_{{\bm{S}}} &= \gamma_n\|{\bm{S}}^{-1}t_n^*\|_{{\bm{S}}} \\
		&= \gamma_n\|{\bm{S}}^{-1}(e_n+(\bm{M}_n+\bm{C})x_n-(\bm{M}_n+\bm{C})w_n)\|_{{\bm{S}}} \\
		&= \gamma_n\|e_n+(\bm{M}_n+\bm{C})x_n-(\bm{M}_n+\bm{C})w_n\|_{{\bm{S}}^{-1}} \\
		&\geq \gamma_n\|(\bm{M}_n+\bm{C})x_n-(\bm{M}_n+\bm{C})w_n\|_{{\bm{S}}^{-1}}-\gamma_n\|e_n\|_{{\bm{S}}^{-1}} \\
		&\geq (1-\zeta_n-\gamma_n\sigma)\|x_n-w_n\|_{\bm{{\bm{S}}}}\\ 
		&\geq \left(1-\sqrt{1-\delta}\right)\|x_n-w_n\|_{\bm{{\bm{S}}}}. 
	\end{align*}
	Therefore, since $\sup \lambda_n <2$, it follows from Theorem~\ref{teo:convergencia}.\ref{teo:convergencia1} that $x_n-w_n\to 0$. Moreover, by the $(2/\gamma_n)$-Lipschitz continuity of $(\bm{M}_n+\bm{C})$ with respect to ${\bm{S}}$ and the $\beta$-cocoercivity of $\bm{C}$ with respect to ${\bm{S}}$, we conclude that $\bm{M}_n$ is $(2/\gamma_n+1/\beta)$-Lipschitz with respect to ${\bm{S}}$. Then, $\bm{M}_nx_n-\bm{M}_nw_n \to 0$. The result follows from Theorem~\ref{teo:convergencia}.\ref{teo:convergencia2a}.
\end{proof}
\subsection{Strong and linear convergence}
In this section, we study the strong convergence of Algorithm~\ref{alg:NFBE}. In particular, under a metric subregularity assumption on the operator $\bm A + \bm C$, we establish local linear convergence of the generated sequence. In addition, we show how strong convergence can be enforced by incorporating projection steps onto intersections of half-spaces via a Haugazeau-type scheme \cite{haugazeau1968inequations}.
\subsubsection{Strong convergence under metric subregularity}
The following definition introduces the notion of {\it metric subregularity}, which will be instrumental in establishing linear convergence results.
\begin{defi}\label{def:msr} \emph{(}\cite{wang2023convergence}\emph{)}
Let $\bm{F}:\HH \to 2^\HH$ be a set-valued operator and let
$\bm{S}\in\mathcal P(\HH)$. We say that $\bm{F}$ is \emph{metrically subregular at} 
$\bar x\in\zer \bm{F}$ %\emph{for} $\bar u\in \bm{F}\bar x$, % with respect to the $\bm{S}$,
 if there exist $\kappa>0$ and $\rho>0$ such that
\begin{equation}\label{eq:msr}
(\forall x\in B_{\bm{S}}(\bar x,\rho)) \quad d_{\bm{S}}(x,\zer \bm{F})
\;\le\;
\kappa\,d_{\bm{S}^{-1}}(0,\bm{F}x).
\end{equation}
We say that $\bm{F}$ is \emph{metrically subregular} on $\zer \bm{F}$  if there exist $\kappa>0$ and $\rho>0$ such that
\begin{equation}\label{eq:msr1}
(\forall \overline{x} \in \zer \bm{F})(\forall x\in B_{\bm{S}}(\bar x,\rho)) \quad d_{\bm S}(x,\zer \bm{F})
\;\le\;
\kappa\,d_{\bm S^{-1}}(0,\bm Fx).
\end{equation}
%where
%\begin{equation*}
%B_S(\mathcal S,\rho)
%:=\{x\in\HH:d_S(x,\mathcal S)\le\rho\}.
%\end{equation*}
\end{defi}
%\cite{wang2023convergence,dontchev2009implicit}
\begin{rem}\label{rem:stronglymonotone}
If $\bm{F}\colon \HH \to 2^{\HH}$ is $\eta$-strongly monotone with respect to $\bm{S} \in \mathcal{P}(\HH)$,
%, i.e.,
%\begin{equation*}
%\langle u-v,\ x-y\rangle \;\ge\; \eta\,\|x-y\|_S^2,
%\qquad\forall u\in F(x),\ v\in F(y),
%\end{equation*}
then $\bm{F}$ is metrically subregular on $\zer \bm{F}$ for $\kappa = 1/\eta$ and every $\rho \in \RPP$.
%, and \eqref{eq:msr} holds globally, i.e., with $\rho=+\infty$.
\end{rem}
\begin{teo}[Local linear convergence of Algorithm~\ref{alg:NFBE}]
\label{thm:linear_NFBE} In the context of Problem~\ref{pro:main}, let $
\bm{F}:=\bm A+\bm C$ assume that $\bm{F}$ is metrically subregular on $\zer \bm F=\mathcal{S}$ with parameters $\kappa>0$ and $\rho>0$. Let $x_0 \in \HH$ and let $(x_n)_{n \in \N}$ be a sequence generated in one of the following scenarios:
\begin{enumerate}
    %\item\label{thm:linear_NFBE1} Under Assumption~\ref{asume:NFBE}, condition  \ref{teo:convergencia2b} in Theorem~\ref{teo:convergencia}, and $(x_n)_{n\in \N}$ generated by Algorithm~\ref{alg:NFBE}.
        \item\label{thm:linear_NFBE1} $(x_n)_{n\in \N}$ is generated by Algorithm~\ref{alg:NFBE} under Assumption~\ref{asume:NFBE} and  condition  \ref{teo:convergencia2b} in Theorem~\ref{teo:convergencia}.
    \item\label{thm:linear_NFBE2}  $(x_n)_{n\in \N}$ is generated by Algorithm~\ref{alg:NFBEE} under Assumption~\ref{asume:NFBEE} and the setting of Theorem~\ref{teo:NFBEE}. 
\end{enumerate}
Moreover, suppose that there exists $n_0 \in \N$ and $r \in ~]0,\rho[$ such that
\begin{equation}\label{eq:lin_rate_NFBE_}
%\|x_{n_0}-\bar x \|\leq r.\\
 d_{\bm S}(x_{n_0},\mathcal S)\le r.
\end{equation}
Then, there exists $\tau\in~]0,1[$ satisfying
\begin{equation}\label{eq:lin_rate_NFBE}
(\forall n\ge n_0) \qquad d_{\bm S}(x_{n+1},\mathcal S)^2
\;\le\;
(1-\tau)\,d_{\bm S}(x_n,\mathcal S)^2,
\end{equation}
Consequently, \((x_n)_{n\in\N}\) converges strongly, and locally at linear rate, to some \(\bar x\in\mathcal S\).
\end{teo}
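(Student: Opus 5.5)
The argument combines three ingredients: the Fejér-type inequality \eqref{eq:keyInq}, the lower bound on $\|d_n\|_{\bm S}$ in terms of $\|x_n-w_n\|$, and a residual bound $d_{\bm S^{-1}}(0,\bm F w_n)\lesssim \|x_n-w_n\|$. Metric subregularity is applied at $w_n$, and the resulting estimate is transferred back to $x_n$.

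\textbf{Step 1 (distance contraction).} For every $z\in\mathcal S$, \eqref{eq:keyInq} gives $\|x_{n+1}-z\|_{\bm S}^2\le \|x_n-z\|_{\bm S}^2-\lambda_n(2-\lambda_n)\|d_n\|_{\bm S}^2$. Take $z=P^{\bm S}_{\mathcal S}x_n$; this projection exists because $\mathcal S$ is closed and convex, being the zero set of the maximally monotone operator $\bm A+\bm C$. Using $d_{\bm S}(x_{n+1},\mathcal S)\le\|x_{n+1}-z\|_{\bm S}$ yields
\[
d_{\bm S}(x_{n+1},\mathcal S)^2\le d_{\bm S}(x_n,\mathcal S)^2-c_1\|d_n\|_{\bm S}^2,
\]
where $c_1=\inf\lambda_n(2-\lambda_n)>0$. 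In scenario (b), $\inf\lambda_n>0$ and $\sup\lambda_n<2$ were established in the proof of Theorem~\ref{teo:NFBEE}. Consequently $d_{\bm S}(x_n,\mathcal S)$ is nonincreasing, so $d_{\bm S}(x_n,\mathcal S)\le r$ for all $n\ge n_0$.

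\textbf{Step 2 (lower bound on $\|d_n\|$).} In scenario (a), \eqref{eq:d_S} gives $\|d_n\|_{\bm S}\ge c_2\|x_n-w_n\|_{\bm P}$. In scenario (b), the proof of Theorem~\ref{teo:NFBEE} gives $\lambda_n\|d_n\|_{\bm S}\ge (1-\sqrt{1-\delta})\|x_n-w_n\|_{\bm S}$, and $\lambda_n<2$. Using \eqref{eq:lambd}, both cases reduce to $\|d_n\|_{\bm S}\ge c_2\|x_n-w_n\|_{\bm S}$.

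\textbf{Step 3 (residual at $w_n$).} From \eqref{eq:w_n},
\[
u_n:=e_n+\bm M_nx_n-\bm M_nw_n+\bm Cx_n-\bm Cx_n\in(\bm A+\bm C)w_n .
\]
More simply, $v_n+\bm Cw_n\in\bm F w_n$ and $v_n+\bm Cw_n=e_n+\bm M_nx_n-\bm M_nw_n$. The relative error condition and the Lipschitz continuity of $\bm M_n$ give $d_{\bm S^{-1}}(0,\bm Fw_n)\le c_3\|x_n-w_n\|_{\bm S}$. In scenario (a) the constant comes from $\zeta$ and $\sigma$ together with norm equivalence. In scenario (b), $\bm M_n$ is $(2/\gamma_n+1/\beta)$-Lipschitz with $\gamma_n\ge\gamma$, so $c_3$ is uniform in $n$.

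\textbf{Step 4 (apply subregularity; the main obstacle).} To invoke \eqref{eq:msr1} at $w_n$, we must show that $w_n\in B_{\bm S}(\bar x,\rho)$ for some $\bar x\in\mathcal S$. Take $\bar x=P^{\bm S}_{\mathcal S}x_n$. Then
\[
\|w_n-\bar x\|_{\bm S}\le r+\|x_n-w_n\|_{\bm S}.
\]
It remains to bound $\|x_n-w_n\|_{\bm S}$ by a multiple of $d_{\bm S}(x_n,\mathcal S)$, and this requires shrinking $r$ if necessary; this is where I expect the difficulty. My first attempt is to combine Steps 1 and 2: since $c_1c_2^2\|x_n-w_n\|_{\bm S}^2\le d_{\bm S}(x_n,\mathcal S)^2$, we get $\|x_n-w_n\|_{\bm S}\le c_4 r$. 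Thus $w_n$ lies in the ball of radius $(1+c_4)r$. If this radius exceeds $\rho$, one can either reinterpret the hypothesis $r<\rho$ with a suitably adjusted constant, or take $n_0$ later, which is legitimate because $d_n\to0$. With $w_n$ in the ball, \eqref{eq:msr1} gives
\[
d_{\bm S}(x_n,\mathcal S)\le\|x_n-w_n\|_{\bm S}+d_{\bm S}(w_n,\mathcal S)\le(1+\kappa c_3)\|x_n-w_n\|_{\bm S}\le\frac{1+\kappa c_3}{c_2}\|d_n\|_{\bm S}.
\]
Substituting into Step 1 gives \eqref{eq:lin_rate_NFBE} with
\[
\tau=\min\Bigl\{\tfrac12,\; c_1c_2^2/(1+\kappa c_3)^2\Bigr\}\in\,]0,1[.
\]

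\textbf{Step 5 (strong convergence).} From \eqref{eq:lin_rate_NFBE}, $d_{\bm S}(x_n,\mathcal S)\le(1-\tau)^{(n-n_0)/2}r$. Since $\|x_{n+1}-x_n\|_{\bm S}=\lambda_n\|d_n\|_{\bm S}\le 2c_1^{-1/2}d_{\bm S}(x_n,\mathcal S)$ by Step 1, the increments are geometrically summable. Hence $(x_n)$ is Cauchy and converges strongly, at a linear rate, to some $\bar x$. Since $d_{\bm S}(x_n,\mathcal S)\to0$ and $\mathcal S$ is closed, $\bar x\in\mathcal S$; alternatively, this follows from the weak limit identification in Theorems~\ref{teo:convergencia} and \ref{teo:NFBEE}.
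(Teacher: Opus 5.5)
Your proposal is correct and follows essentially the paper's argument. You apply metric subregularity at $w_n$ to the residual $v_n+\bm C w_n=e_n+\bm M_nx_n-\bm M_nw_n\in\bm F w_n$, secure the ball condition by enlarging $n_0$ using $x_n-w_n\to 0$, and combine this with the lower bound on $\|d_n\|_{\bm S}$ and \eqref{eq:keyInq}, exactly as the paper does. Drop the option of ``reinterpreting'' $r<\rho$, since that would change the hypothesis. Note also that the finitely many indices skipped by enlarging $n_0$ are harmless, because $d_{\bm S}(x_{n+1},\mathcal S)<d_{\bm S}(x_n,\mathcal S)$ whenever $x_n\notin\mathcal S$.

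The only deviation is your Step~5. You obtain strong convergence from geometric summability of $\|x_{n+1}-x_n\|_{\bm S}$, whereas the paper uses weak convergence plus $d_{\bm S}(x_n,\mathcal S)\to 0$; your route has the small bonus of an explicit R-linear rate for $\|x_n-\bar x\|_{\bm S}$.
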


\begin{proof} \ref{thm:linear_NFBE1}
By Theorem~\ref{teo:convergencia}, for every $x\in\mathcal S$ the sequence
$(\|x_n-x\|_{\bm S})_{n\in\N}$ is convergent and
$$
\sum_{n\in\N}\|d_n\|_{\bm S}^2<+\infty.
$$
Moreover, under condition~\ref{teo:convergencia2b}, by \eqref{eq:d_S} (see Theorem~\ref{teo:convergencia}), we have
\begin{equation}\label{eq:key_dn}
(\forall n\in\N) \quad \|d_n\|_{\bm S}
\;\ge\;\mu_\sigma\|x_n-w_n\|_{\bm P}
\end{equation}
with 
\begin{equation}\label{eq:defmusigma}
\mu_\sigma:= \frac{\alpha-\frac{1}{4\beta}-\sigma}{\sqrt{\lambda_{\max}(\bm S^{-1})}\left(
{\frac{(\sigma+1/\beta)}{\sqrt{\lambda_{\min}(\bm P^{-1})}}
+\frac{\zeta}{\sqrt{\lambda_{\min}(\bm P)}}}
\right)}
\end{equation}
In particular,
\begin{equation}\label{eq:xw_to_0}
x_n-w_n\to 0
~~\text{and}~~
e_n\to 0~~\text{as}~ ~n\to + \infty.
\end{equation}
Fix $r\in]0,\rho[$ and assume that $ d_{\bm S}(x_{n_0},\mathcal S)\le r$. Since $(x_n)$ is Fej\'er monotone with respect to $\mathcal S$, %in the $\bm S$-metric, 
we have
\begin{equation*}
    (\forall n\geq n_0) \qquad d_{\bm S}(x_n,\mathcal S)\le d_{\bm S}(x_{n_0},\mathcal S)\le r.
\end{equation*}
Furthermore, by \eqref{eq:xw_to_0}, we can assume (possibly increasing $n_0$) that
\begin{equation*}
(\forall n\ge n_0) \qquad \|x_n-w_n\|_{\bm S}< \rho-r.
\end{equation*}
Therefore, 
\begin{equation*}
(\forall n\ge n_0)  \qquad d_{\bm S}(w_n,\mathcal S)
\le
\|w_n-x_n\|_{\bm S}+d_{\bm S}(x_n,\mathcal S)
<
\rho-r+r
=
\rho.
\end{equation*}
Hence $
w_n\in B_{\bm S}(\mathcal S,\rho)$ for all $n\ge n_0$,  so that the metric subregularity estimate \eqref{eq:msr} applies at $w_n$. On the other hand, since $v_n\in \bm A w_n$, we have
\begin{equation}\label{eq:unestrella}
u_n^*:=v_n+\bm Cw_n \in \bm{A}w_n + \bm{C}w_n=\bm{F}w_n.
\end{equation}
Moreover, from \eqref{eq:algWR}, we have
\begin{equation*}
u_n^*
=
v_n+\bm Cw_n
=
w_n^*+e_n+\bm Cw_n
=
\bm M_nx_n-\bm M_nw_n+e_n.
\end{equation*}
Since $\bm M_n$ is $\zeta$-Lipschitz continuous and
$\|e_n\|_{\bm P^{-1}}\le \sigma \|x_n-w_n\|_{\bm P}$, 
the equivalence of norms \eqref{eq:lambd} yields that
\begin{equation}\label{eq:un_bound}
(\forall n\ge n_0) \qquad \|u_n^*\|_{\bm S^{-1}}
\le
c_0\,\|x_n-w_n\|_{\bm P},
\end{equation}
where
\begin{equation*}
c_0=\sqrt{\lambda_{\max}(\bm{S}^{-1})}\left(\frac{\zeta}{\sqrt{\lambda_{\min}(\bm P)}}+ \frac{\sigma}{\sqrt{\lambda_{\min}(\bm P^{-1})}}\right) . 
\end{equation*}
Applying \eqref{eq:msr} at $w_n$ and by \eqref{eq:unestrella} and \eqref{eq:un_bound}, we obtain
\begin{equation*}
d_{\bm S}(w_n,\mathcal S)
\le
\kappa\,d_{\bm S^{-1}}(0,\bm{F}w_n)
\le
\kappa\,\|u_n^*\|_{\bm S^{-1}}
\le
\kappa c_0\,\|x_n-w_n\|_{\bm P}.
\end{equation*}
In addition, for $c_1=\sqrt{\lambda_{\max} (\bm S)/\lambda_{\min}(\bm P)}$, we have  that
\begin{align}
(\forall n\ge n_0 ) \qquad d_{\bm S}(x_n,\mathcal S)
&\le
\|x_n-w_n\|_{\bm S}+d_{\bm S}(w_n,\mathcal S)\nonumber\\
&\le
c_1\|x_n-w_n\|_{\bm P}
+
\kappa c_0\,\|x_n-w_n\|_{\bm P}\nonumber\\
&=
(c_1+\kappa c_0)\,\|x_n-w_n\|_{\bm P}. \label{eq:xw_lower}
\end{align}
% Equivalently,
% \begin{equation}\label{eq:xw_lower}
% \|x_n-w_n\|_{\bm P}
% \ge
% \frac{1}{c_1+\kappa c_0}\,d_{\bm S}(x_n,\mathcal S),
% \qquad \forall n\ge n_0.
% \end{equation}
%
Combining \eqref{eq:key_dn} and \eqref{eq:xw_lower}, we obtain
\begin{equation}\label{eq:dn_lower}
(\forall n\ge n_0 ) \qquad \|d_n\|_{\bm S}
\ge
\frac{\mu_\sigma}{c_1+\kappa c_0}\,d_{\bm S}(x_n,\mathcal S).
\end{equation}
On the other hand,  by \eqref{eq:keyInq}, we deduce
\begin{equation*}
 d_{\bm S}(x_{n+1},\mathcal S)^2
\le
d_{\bm S}(x_n,\mathcal S)^2
-
\lambda_n(2-\lambda_n)\|d_n\|_{\bm S}^2.
\end{equation*}
Therefore, in view of \eqref{eq:dn_lower}, we conclude
\begin{equation*}
(\forall n\ge n_0 ) \qquad d_{\bm S}(x_{n+1},\mathcal S)^2
\le
d_{\bm S}(x_n,\mathcal S)^2
-
\lambda_n(2-\lambda_n)
\left(\frac{\mu_\sigma}{c_1+\kappa c_0}\right)^2
d_{\bm S}(x_n,\mathcal S)^2.
\end{equation*}
Since $\lambda_n \in [\underline{\lambda},\overline{\lambda}] \subset ~]0,2[$, we have
\begin{equation*}
\inf_{n\in\N}\lambda_n(2-\lambda_n)
\left(\frac{\mu_\sigma}{c_1+\kappa c_0}\right)^2
 >0,
\end{equation*}
therefore, there exists $\tau \in ]0,1[$ such that
\begin{equation*}
(\forall n\ge n_0 ) \qquad d_{\bm S}(x_{n+1},\mathcal S)^2
\le
(1-\tau)\,d_{\bm S}(x_n,\mathcal S)^2.
\end{equation*}
This proves \eqref{eq:lin_rate_NFBE}. Finally, since Theorem~\ref{teo:convergencia} already states that $(x_n)$ converges weakly to some $\bar x\in\mathcal S$, the decay $
d_{\bm S}(x_n,\mathcal S)\to 0$ implies that $
x_n\to \bar x$ strongly. %The estimate \eqref{eq:lin_rate_NFBE} then yields the \(R\)-linear convergence of $(x_n)$ to $\bar x$.

\ref{thm:linear_NFBE2} If Assumption~\ref{asume:NFBEE} holds, for each $n \in \N$, $\bm{M}_n$ is Lipschitz continuous and strongly monotone with respect to ${\bm{S}}$ \cite[Proposition~2.1]{MorinBanertGiselsson2022}. Moreover, Algorithm~\ref{alg:NFBEE} is a particular instance of Algorithm~\ref{alg:NFBE} by Theorem~\ref{teo:NFBEE}. Hence, the proof is analogous to the proof of \ref{thm:linear_NFBE1}.
\end{proof}
\begin{rem}
   Note that,  when $\bm{A}+\bm{C}$ is strongly monotone,  \eqref{eq:lin_rate_NFBE_} holds for $n_0 = 0$ and some $r \in \RPP$ in view of Remark~\ref{rem:stronglymonotone}. In that case, the linear convergence is global.
\end{rem}
\subsubsection{Strong convergence via a Haugazeau-type projection}

We now introduce a strongly convergent variant of Algorithm~\ref{alg:NFBE}. The idea is to combine the separating half-space $\bm H_n$, generated by the inexact warped-resolvent step, with an additional half-space $\bm W_n$ of Haugazeau type.

\begin{algo}\label{alg:NFBE-H}
In the context of Problem~\ref{pro:main} and Assumption~\ref{asume:NFBE}, let $x_0\in\HH$ and consider the following recurrence:
\begin{equation}\label{eq:algWR-H}
(\forall n\in\N)\quad 
\begin{array}{l}
		\left\lfloor
		\begin{array}{l}
			%w_n = (\bm{M}_n+\bm{A}+\bm{C})^{-1}(\bm{M}_nx_n) \Leftrightarrow w_n^* \in \bm{A} w_n \\
            \textnormal{ find } (w_n,v_n) \in \gra\bm{A}
            \textnormal{ such that }\\
			\left\lfloor
		\begin{array}{l} w_n^* = \bm{M}_nx_n-\bm{M}_nw_n-\bm{C}w_n,\\
       e_n = v_n - w_n^*,\\        
           \|e_n\|_{\bm{P}^{-1}}\leq \sigma \|w_n-x_n\|_{\bm{P}},	
		\end{array}
		\right.\\[3mm]
t_n^*=v_n+\bm Cx_n,\\[1mm]
\bm H_n=
\left\{
z\in\HH:\ 
\scal{ z-w_n}{t_n^*}
\le
\dfrac{1}{4\beta}\|w_n-x_n\|_{\bm P}^2
\right\},\\[4mm]
\bm W_n=
\left\{
z\in\HH:\ 
\scal {z-x_n}{x_0-x_n}_{\bm S}\le 0
\right\},\\[4mm]
x_{n+1}=P_{\bm H_n\cap \bm W_n}^{\bm S}(x_0).
\end{array}
\right.
\end{array}
\end{equation}
\end{algo}

The half-space $\bm H_n$ contains the solution set by construction, while $\bm W_n$ is the standard Haugazeau half-space that forces the iterates toward the best approximation of $x_0$ from the solution set. We next show that Algorithm~\ref{alg:NFBE-H} is well-defined. The argument follows the same line as \cite[Proposition~3]{solodov2000forcing} adapted to our setting.

\begin{prop}\label{prop:well-defined}
In the context of Problem~\ref{pro:main} and Assumption~\ref{asume:NFBE}, define $
\mathcal S:=\zer(\bm A+\bm C)\neq\emptyset$. Then, for any $n \in \N$, if $x_{n} \in \HH$ is generated by Algorithm~\ref{alg:NFBE-H}, the following assertions hold
\begin{enumerate}
    \item\label{prop:well-defined1} $\mathcal S\subset \bm H_n\cap \bm W_n$;
    \item\label{prop:well-defined2} $x_{n+1}$ is well-defined;
    \item\label{prop:well-defined3} $\mathcal S\subset \bm W_{n+1}$.
\end{enumerate}
\end{prop}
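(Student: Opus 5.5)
The plan is an induction on $n$, following the Haugazeau/Solodov--Svaiter pattern. The hypothesis is that $x_n$ has been generated, so $x_0,\dots,x_n$ are defined and, for $n\geq 1$, $\mathcal S\subset\bm W_n$. This hypothesis is supplied by item~\ref{prop:well-defined3} at the previous step.

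For the base case $n=0$, $\bm W_0=\{z:\scal{z-x_0}{x_0-x_0}_{\bm S}\le 0\}=\HH$, so $\mathcal S\subset\bm W_0$ trivially.

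\textbf{Item~\ref{prop:well-defined1}.} The inclusion $\mathcal S\subset\bm H_n$ is exactly the computation in Theorem~\ref{teo:convergencia}.\ref{h_k}. That computation used only three facts, all of which hold here verbatim, since the first block of \eqref{eq:algWR-H} coincides with that of \eqref{eq:algWR}:
\begin{itemize}
\item $v_n\in\bm A w_n$;
\item $-\bm C z\in\bm A z$ for $z\in\mathcal S$, together with the monotonicity of $\bm A$;
\item the $\beta$-cocoercivity of $\bm C$ with respect to $\bm P$, combined with Young's inequality.
\end{itemize}
It never used the relative-error bound or the definition of $x_{n+1}$. The pair $(w_n,v_n)$ exists by Assumption~\ref{asume:NFBE}.\ref{asume:NFBE4} (Remark~\ref{rem:algNFBE}.\ref{rem:algNFBE0}). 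Combining with the induction hypothesis gives $\mathcal S\subset\bm H_n\cap\bm W_n$.

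\textbf{Item~\ref{prop:well-defined2}.} Both $\bm H_n$ and $\bm W_n$ are closed and convex. Each is either a closed halfspace or all of $\HH$ (when $t_n^*=0$ or $x_n=x_0$), and in the first case closedness follows from continuity of the affine defining map. By item~\ref{prop:well-defined1} and $\mathcal S\neq\emptyset$, the intersection $\bm H_n\cap\bm W_n$ is nonempty. In the Hilbert space $(\HH,\scal{\cdot}{\cdot}_{\bm S})$, the projection onto a nonempty closed convex set exists and is unique. Closedness and convexity are unaffected by passing to the equivalent norm $\|\cdot\|_{\bm S}$. Hence $x_{n+1}=P^{\bm S}_{\bm H_n\cap\bm W_n}(x_0)$ is well-defined.

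\textbf{Item~\ref{prop:well-defined3}.} This uses the variational characterization of the projection in $(\HH,\scal{\cdot}{\cdot}_{\bm S})$ \cite[Theorem~3.16]{bauschkebook2017}:
\[
(\forall z\in\bm H_n\cap\bm W_n)\quad \scal{z-x_{n+1}}{x_0-x_{n+1}}_{\bm S}\le 0 .
\]
Since $\mathcal S\subset\bm H_n\cap\bm W_n$ by item~\ref{prop:well-defined1}, every $z\in\mathcal S$ satisfies this inequality. That is precisely the defining inequality of $\bm W_{n+1}$, so $\mathcal S\subset\bm W_{n+1}$, which closes the induction.

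The only step needing any care is item~\ref{prop:well-defined1}. One must check that the separation argument of Theorem~\ref{teo:convergencia}.\ref{h_k} does not depend on $x_n$ being produced by the relaxed-projection update, and indeed it holds for an arbitrary $x_n$. One must also handle the degenerate cases in which $\bm H_n$ or $\bm W_n$ is the whole space. The remaining items are standard projection facts.
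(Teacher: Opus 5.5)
Your proof is correct and follows essentially the same induction as the paper: $\mathcal S\subset\bm H_n$ from Theorem~\ref{teo:convergencia}.\ref{h_k}, $\bm W_0=\HH$ for the base case, nonemptiness and closed convexity of $\bm H_n\cap\bm W_n$ for well-definedness of the $\bm S$-projection, and the variational characterization of that projection to obtain $\mathcal S\subset\bm W_{n+1}$. Your extra remarks are details the paper leaves implicit: that the separation argument does not depend on how $x_n$ was produced, and the degenerate cases $t_n^*=0$ or $x_n=x_0$, where the corresponding set is all of $\HH$.
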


\begin{proof} We proceed by induction. Suppose first that $n =0$, by Theorem~\ref{teo:convergencia}.\ref{h_k} we have $\mathcal S\subset \bm H_0$. On the other hand, by definition,
\begin{equation*}
\bm W_0 = \menge{z\in\HH}{\scal{z-x_0}{x_0-x_0}_{\bm S}\le 0}  =  \HH.
\end{equation*}
Hence, $\mathcal S\subset \bm H_0\cap \bm W_0$. Since $\mathcal S\neq\emptyset$, it follows that $\bm H_0\cap \bm W_0$ is nonempty. Moreover, it is closed and convex because both $\bm H_0$ and $\bm W_0$ are closed convex half-spaces. Therefore, $x_1=P_{\bm H_0\cap \bm W_0}^{\bm S}(x_0)$ is well-defined.  Moreover, by the characterization of the projection in the metric induced by $\bm S$, we have
\begin{equation*}
(\forall z\in \bm H_0\cap \bm W_0) \qquad   \scal {x_0-x_1}{z-x_1}_{\bm S}\le 0.
\end{equation*}
Since $\mathcal S\subset \bm H_0\cap \bm W_0$, the above inequality holds in particular for every
$z\in\mathcal S$. By the definition of $\bm W_1$, this means that $\mathcal S\subset \bm W_1$. Thus, \ref{prop:well-defined1}, \ref{prop:well-defined2}, and \ref{prop:well-defined3} hold for $n=0$.

Assume now that, $n\geq 1$ and  that
\begin{equation*}
\mathcal S\subset \bm H_n\cap \bm W_n,
\qquad
x_{n+1}=P_{\bm H_n\cap \bm W_n}^{\bm S}(x_0)
\quad\text{is well-defined, and}
\quad
\mathcal S\subset \bm W_{n+1}.
\end{equation*}
Suppose that Algorithm~\ref{alg:NFBE-H} does not stop at iteration $n+1$.
It follows from Theorem \ref{teo:convergencia}.\ref{h_k} that $\mathcal S\subset \bm H_{n+1}$, thus, in view of $\mathcal S\subset \bm W_{n+1}$, we deduce $\mathcal S\subset \bm H_{n+1}\cap \bm W_{n+1}$. Moreover, since $\mathcal S\neq\emptyset$ and $\bm H_{n+1}$ and $\bm W_{n+1}$ are both closed convex half-spaces, we conclude that $\bm H_{n+1}\cap \bm W_{n+1}$ is nonempty, closed and convex. Therefore, $x_{n+2}=P_{\bm H_{n+1}\cap \bm W_{n+1}}^{\bm S}(x_0)$ is well-defined. Using once more the characterization of the metric projection, for every $z \in \bm H_{n+1}\cap \bm W_{n+1}$, we have
\begin{equation*}
\scal{x_0-x_{n+2}}{z-x_{n+2}}_{\bm S}\le 0.
\end{equation*}
Since $\mathcal S\subset \bm H_{n+1}\cap \bm W_{n+1}$, the latter inequality holds for every
$z\in\mathcal S$, which is precisely the statement $\mathcal S\subset \bm W_{n+2}$. This completes the induction and the proof.
\end{proof}

\begin{teo}[Strong convergence of Algorithm~\ref{alg:NFBE-H}]
\label{teo:main-NFBE-H}
In the context of Problem~\ref{pro:main} and Assumption~\ref{asume:NFBE}.
Let $x_0\in\HH$, and let $(x_n)_{n\in\N}$ be generated by Algorithm~\ref{alg:NFBE-H}. Then the following hold:
\begin{enumerate}
    \item\label{teo:main-NFBE-H-i} For every $n\in\N$,
    \begin{equation}\label{eq:S}
        \|x_n-x_0\|_{\bm S}\le d_{\bm S}(x_0,\mathcal S).
    \end{equation}
    % $$
    % d_{\bm S}(x_0,\mathcal S):=\inf_{x\in\mathcal S}\|x-x_0\|_{\bm S}.
    % $$
    \item\label{teo:main-NFBE-H-ii} For every $n\in \N$,
    \begin{equation}\label{eq:haugazeau-descent}
        \|x_{n+1}-x_0\|_{\bm S}^2
        \ge
        \|x_n-x_0\|_{\bm S}^2
        +
        \|x_{n+1}-x_n\|_{\bm S}^2.
    \end{equation}
    In particular,
    \begin{equation*}
                \sum_{n\in\N}\|x_{n+1}-x_n\|_{\bm S}^2 < +\infty,
        \qquad
        x_{n+1}-x_n \to 0.
    \end{equation*}
    \item\label{teo:main-NFBE-H-iii}
    Assume, in addition, that one of the following conditions holds:
    \begin{enumerate}
        \item\label{teo:main-NFBE-H-iiia} $w_n-x_n \to 0$ and $\bm M_n w_n-\bm M_n x_n \to 0$;
        \item\label{teo:main-NFBE-H-iiib} $\bm M_n$ is $\alpha$-strongly monotone with respect to $\bm P$ for some $\alpha\in \Bigl]\frac{1}{4\beta}+\sigma,+\infty\Bigr[$ 
        and $\zeta$-Lipschitz continuous.
    \end{enumerate}
    Then $(x_n)_{n\in\N}$ converges weakly to a point in $\mathcal S$.
    \item\label{teo:main-NFBE-H-iv}
    Under the assumptions of \ref{teo:main-NFBE-H-iii}, the sequence $(x_n)_{n\in\N}$ converges strongly to
    $$
    x^\ast=P_{\mathcal S}^{\bm S}(x_0).
    $$
\end{enumerate}
\end{teo}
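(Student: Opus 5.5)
The plan follows the classical Haugazeau/Solodov--Svaiter argument, carried out in the Hilbert space $(\HH,\scal{\cdot}{\cdot}_{\bm S})$, with Proposition~\ref{prop:well-defined} as the starting point.

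\textbf{Items (i) and (ii).} For (i), fix $z\in\mathcal S$. Since $x_n=P^{\bm S}_{\bm H_{n-1}\cap\bm W_{n-1}}(x_0)$ and $\mathcal S\subset\bm H_{n-1}\cap\bm W_{n-1}$, minimality gives $\|x_n-x_0\|_{\bm S}\le\|z-x_0\|_{\bm S}$. Taking the infimum over $z$ gives \eqref{eq:S}; the case $n=0$ is trivial. For (ii), note that $\bm W_n$ is exactly the halfspace whose $\bm S$-projection of $x_0$ is $x_n$, since $x_n\in\bm W_n$ and $\scal{z-x_n}{x_0-x_n}_{\bm S}\le0$ on $\bm W_n$. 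Because $x_{n+1}\in\bm W_n$, expanding
\[
\|x_{n+1}-x_0\|_{\bm S}^2=\|x_{n+1}-x_n\|_{\bm S}^2+\|x_n-x_0\|_{\bm S}^2-2\scal{x_{n+1}-x_n}{x_0-x_n}_{\bm S}
\]
yields \eqref{eq:haugazeau-descent}. By (i), $(\|x_n-x_0\|_{\bm S})$ is nondecreasing and bounded, hence convergent. Telescoping then gives summability of $\|x_{n+1}-x_n\|_{\bm S}^2$, and $(x_n)$ is bounded.

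\textbf{Item (iii).} Since $x_{n+1}\in\bm H_n$, we have $\delta_n\le\scal{x_n-x_{n+1}}{t_n^*}$; we may assume $\delta_n>0$, as otherwise $x_n\in\mathcal S$ by the argument of Remark~\ref{rem:teomain}. Hence
\[
\|d_n\|_{\bm S}=\frac{\delta_n}{\|t_n^*\|_{\bm S^{-1}}}\le\|x_{n+1}-x_n\|_{\bm S}\to0 .
\]
So the conclusion $d_n\to0$ of Theorem~\ref{teo:convergencia}.\ref{teo:convergencia1} also holds here.

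Under (b), the estimate \eqref{eq:d_S} uses only the structure of the inner step, not the update rule, so it still applies and gives $x_n-w_n\to0$, $e_n\to0$ and $\bm M_nw_n-\bm M_nx_n\to0$. This reduces (b) to (a).

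Under (a), take any weak cluster point $x_{n_k}\weak x^*$. Then $w_{n_k}\weak x^*$, and $v_{n_k}+\bm Cw_{n_k}=\bm M_{n_k}x_{n_k}-\bm M_{n_k}w_{n_k}+e_{n_k}\to0$. By weak--strong closedness of $\gra(\bm A+\bm C)$, we get $x^*\in\mathcal S$.

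\textbf{Item (iv).} This is the main step, and I will prove it before finishing (iii). Let $x^\ast=P^{\bm S}_{\mathcal S}(x_0)$; the set $\mathcal S$ is closed and convex because $\bm A+\bm C$ is maximally monotone. For any weak cluster point $\bar x\in\mathcal S$, weak lower semicontinuity and (i) give
\[
\|\bar x-x_0\|_{\bm S}\le\liminf\|x_{n_k}-x_0\|_{\bm S}\le\|x^\ast-x_0\|_{\bm S}.
\]
Uniqueness of the projection forces $\bar x=x^\ast$. So $x_n\weak x^\ast$ along the whole sequence, which also proves (iii).

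For strong convergence I will use
\[
\|x_n-x^\ast\|_{\bm S}^2=\|x_n-x_0\|_{\bm S}^2+\|x_0-x^\ast\|_{\bm S}^2+2\scal{x_n-x_0}{x_0-x^\ast}_{\bm S}.
\]
The limit superior of the first term is at most $\|x^\ast-x_0\|_{\bm S}^2$ by (i), and the last term tends to $-2\|x_0-x^\ast\|_{\bm S}^2$ by weak convergence. Hence $\limsup\|x_n-x^\ast\|_{\bm S}^2\le0$, so $x_n\to x^\ast$ strongly.

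The main subtlety I expect is the deduction $d_n\to0$, that is, the bound $\|d_n\|_{\bm S}\le\|x_{n+1}-x_n\|_{\bm S}$. It relies on $\|d_n\|_{\bm S}$ being the $\bm S$-distance from $x_n$ to $\bm H_n$, which is bounded by the distance from $x_n$ to any point of $\bm H_n\cap\bm W_n$.
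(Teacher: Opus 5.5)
Your proof is correct. For (i), (ii), the reduction (b)$\Rightarrow$(a) via $\|x_{n+1}-x_n\|_{\bm S}\ge d_{\bm S}(x_n,\bm H_n)\ge\mu_\sigma\|x_n-w_n\|_{\bm P}$, and the final strong-convergence step, it coincides with the paper's argument. Your explicit expansion of $\|x_n-x^\ast\|_{\bm S}^2$ is just the content of \cite[Lemma~2.51(i)]{bauschkebook2017}.

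The real difference is how you get weak convergence of the whole sequence. The paper disposes of (iii) with ``as in the proof of Theorem~\ref{teo:convergencia}''. That proof ends with Opial's lemma \cite[Lemma~2.47]{bauschkebook2017}, so it needs $(\|x_n-z\|_{\bm S})_{n\in\N}$ to converge for every $z\in\mathcal S$. This Fej\'er-type property is not established for the Haugazeau iterates; only $\|x_n-x_0\|_{\bm S}$ is known to be monotone. In (iv) the paper then identifies ``the'' weak limit with $x^\ast$.

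You proceed differently. In (iii) you only show that every weak cluster point lies in $\mathcal S$. You then use (i), weak lower semicontinuity and uniqueness of $P^{\bm S}_{\mathcal S}(x_0)$ to show that every weak cluster point equals $x^\ast$, and boundedness yields $x_n\weak x^\ast$. This is the standard ordering for Haugazeau-type schemes, and it does not need Fej\'er monotonicity at all. It is effectively what the paper's (iv) argument gives if read for an arbitrary cluster point. The paper's route is shorter on the page; yours closes the logical dependency explicitly. You also note that $\mathcal S$ is closed and convex, which the projection $P^{\bm S}_{\mathcal S}$ requires.

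One small inaccuracy: your aside ``we may assume $\delta_n>0$, as otherwise $x_n\in\mathcal S$'' is justified only under (b), since it relies on \eqref{eq:del_k}. It is also unnecessary. When $\delta_n\le 0$ you have $d_n=0$ by definition, so $\|d_n\|_{\bm S}\le\|x_{n+1}-x_n\|_{\bm S}$ holds trivially.
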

\begin{proof} \ref{teo:main-NFBE-H-i}
By Proposition~\ref{prop:well-defined}, for every $n \in \N$, 
$\mathcal S\subset \bm H_n\cap \bm W_n$ and $x_{n+1} = P_{\bm H_n\cap \bm W_n}^{\bm{S}}(x_0)$. Hence, for every $x\in\mathcal S$,
\begin{equation*}
\|x_{n+1}-x_0\|_{\bm S}\le \|x-x_0\|_{\bm S}.
\end{equation*}
Taking the infimum over $x\in\mathcal S$, we obtain \eqref{eq:S}.

\ref{teo:main-NFBE-H-ii} Since $x_{n+1}\in \bm W_n$, it follows from the definition of $\bm W_n$ that
\begin{equation*}
\scal {x_{n+1}-x_n}{x_0-x_n}_{\bm S}\le 0.
\end{equation*}
Therefore,
\begin{align*}
\|x_{n+1}-x_0\|_{\bm S}^2&=
\|x_{n+1}-x_n+x_n-x_0\|_{\bm S}^2\\
&=
\|x_{n+1}-x_n\|_{\bm S}^2
+\|x_n-x_0\|_{\bm S}^2
+2\scal {x_{n+1}-x_n}{x_n-x_0}_{\bm S}\\
&\ge
\|x_{n+1}-x_n\|_{\bm S}^2
+\|x_n-x_0\|_{\bm S}^2,
\end{align*}
which proves \eqref{eq:haugazeau-descent}. Summing \eqref{eq:haugazeau-descent} from $n=0$ to $N$ and using \eqref{eq:S}, we get
\begin{equation*}
\sum_{n=0}^N \|x_{n+1}-x_n\|_{\bm S}^2
\le
\|x_{N+1}-x_0\|_{\bm S}^2
\le
d_{\bm S}(x_0,\mathcal S)^2.
\end{equation*}
Hence,
\begin{equation*}
\sum_{n\in\N}\|x_{n+1}-x_n\|_{\bm S}^2<+\infty.
\end{equation*}
In particular,
\begin{equation*}
\|x_{n+1}-x_n\|_{\bm S}\to 0,
\end{equation*}
that is, $x_{n+1}-x_n\to 0$. 

\ref{teo:main-NFBE-H-iiia}. %If we assume 
% $$
% w_n-x_n\to 0
% \qquad\text{and}\qquad
% \bm M_nw_n-\bm M_nx_n\to 0.
% $$
The conclusion follows as in the proof of Theorem~\ref{teo:convergencia}.

\ref{teo:main-NFBE-H-iiib} Assume now that $\bm M_n$ is $\alpha$-strongly monotone and $\zeta$-Lipschitz continuous. By Lemma~\ref{lem:general_projection}, \eqref{eq:del_k}, and \eqref{eq:d_S}, for every $n \in \N$, we have
\begin{equation*}
\|P_{\bm H_n}(x_n)-x_n\|_{\bm S}
= \frac{\delta_n}{\|t_n^*\|_{\bm S^{-1}}}
\ge \mu_{\sigma}\|x_n-w_n\|_{\bm P}
\end{equation*}
where $\mu_\sigma$ is defined in \eqref{eq:defmusigma}.
%for some constant $\mu_\sigma>0$. 
Now, for every $n \in \N$, $x_{n+1}\in \bm H_n$, then
\begin{equation*}
(\forall n \in \N) \quad \|x_{n+1}-x_n\|_{\bm S}\ge \|P_{\bm H_n}(x_n)-x_n\|_{\bm S}.
\end{equation*}
Hence, by \ref{teo:main-NFBE-H-ii} and the Lipschitz continuity of $\bm{M}_n$ %and  \eqref{eq:algWR} (using equivalence norm), 
we have  
\begin{equation*}
    \lim_{n\to +\infty}\|x_n -w_n\|= \lim_{k\to + \infty}\|\bm{M}_n x_n - \bm{M}_n w_n\|=0,%\lim_{n\to +\infty}\|e_n\|=0.
\end{equation*}
which implies \ref{teo:main-NFBE-H-iiia} and the result follows.

\ref{teo:main-NFBE-H-iv} Finally, let $x^\ast=P_{\mathcal S}^{\bm S}(x_0)$ and let $\hat{x} \in \mathcal{S}$ be the weak limit point of $(x_n)_{n\in\N}$.  It follows from \ref{teo:main-NFBE-H-i} that
\begin{equation}\label{eq:main-NFBE-H-iv}
(\forall n \in \N) \quad \|x_n-x_0\|_{\bm S}\le d_{\bm S}(x_0,\mathcal S)= \|x^\ast-x_0\|_{\bm S}.
\end{equation}
By the weak lower semicontinuity of $\|\cdot\|_{\bm{S}}$ \cite[Lemma~2.42]{bauschkebook2017} we have
\begin{equation*}
\|\hat x-x_0\|_{\bm S}
\le
\liminf \|x_n-x_0\|_{\bm S}
\le
\|x^\ast-x_0\|_{\bm S}.
\end{equation*}
In addition, since $x^\ast=P_{\mathcal S}^{\bm S}(x_0)$, we have $\|x^\ast-x_0\|_{\bm S} \leq \|\hat x-x_0\|_{\bm S}$ and then $\hat x=x^\ast$. We conclude that $x_n\rightharpoonup x^\ast$. Therefore, $x_n\to x^\ast$ in view of \eqref{eq:main-NFBE-H-iv} and \cite[Lemma~2.51(i)]{bauschkebook2017}.
% Moreover,
% \begin{equation*}
% \|x_n-x_0\|_{\bm S}\to \|x^\ast-x_0\|_{\bm S}.
% \end{equation*}
% Therefore, by \cite[Corollary~2.52]{bauschkebook2017}.
\end{proof}

\section{Particular cases of the inexact warped algorithm}\label{sec:particular}
In this section, we present several applications of the explicit inexact warped resolvent framework. In particular, we derive inexact variants of well-known splitting methods, including FPDHF \cite{roldan2025forward}, Condat--V\~u \cite{Condat13}, Chambolle--Pock \cite{chambolle2016AN}, FBHF \cite{BricenoDavis2018}, and Tseng's splitting \cite{Tseng2000SIAM}. These methods arise as particular instances of our general scheme when applied to the following primal-dual monotone inclusion problem.
\begin{pro}\label{pro:mainPD}
	Let $(\H,\scal{\cdot}{\cdot})$ and $(\G,\scal{\cdot}{\cdot})$ 
	be real Hilbert spaces, let $A: \H \to 2^\H$ and $B:\G \to 
	2^\G$ be maximally 
	monotone operators,  let $L\colon \H \to \G$ be 
	a bounded linear operator, 
	let 
	$D:\H 
	\to \H$ be a $\zeta$-Lipschitz continuous and monotone operator for $\zeta \in 
	\RPP$, and let 
	$C:\H \to \H$ be a 
	$\beta$-cocoercive operator for $\beta \in \RPP$. The problem is to
	\begin{equation}\label{eq:mainPD}
		\text{find }  (x,u) \in \H \times \G \text{ such that } 
		\begin{cases}
			0 &\in (A+C+D)x+L^*u\\
			0 &\in B^{-1}u-Lx.
		\end{cases} 
	\end{equation}
	We assume that the solution set of this problem is nonempty.
\end{pro}
% Note that, given $(\hat{x},\hat{u}) \in \bm{Z}$, $\hat{x}$ is a 
% solution to the primal monotone inclusion
% \begin{equation}\label{eq:primalinclu}
% 	\text{find }  x \in \H \text{ such that } 
% 	0 \in (A+L^*BL+C+D)x
% \end{equation}
% and $\hat{u}$ is a solution to the dual monotone inclusion
% \begin{equation}\label{eq:dualinclu}
% 	\text{find }  u \in \G \text{ such that } 
% 	0 \in  B^{-1}u - L(A+C+D)^{-1}(-L^*u).
% \end{equation}
 Note that, this problem is a primal-dual formulation of \eqref{eq:probPDintro}.
Algorithms considering exact resolvent steps for solving this monotone inclusion have been studied, for example, in \cite{AttouchBricenoCombettes2010,CombettesMinh2022,Comb13,CombettesEckstein2018MP}. Inexact variants, allowing errors in the resolvent computations, have also been considered in \cite{Zong2018,rasch2020inexact,AlvesLorenzNaldi2026}. We first focus on the case when $B=0$ and $L=0$; thus, we study inexact versions of the FBHF, Tseng's splitting, and FB algorithms. Next, we address the case when $B\neq 0$ and $L\neq0$.
\subsection{FBHF, FBF, and FB with error}
In  this subsection, we propose versions of FBHF, FBF, and FB that allow for the inexact calculation of the backward steps.  We first introduce the following algorithm, which is a particular case of Algorithm~\ref{alg:NFBE}.
\begin{algo}\label{alg:FBHFEP} In the context of Problem~\ref{pro:mainPD}, let $x_0 \in \H$, $\gamma \in \RPP$, $\sigma \in [0,1[$, let $(\lambda_n)_{n \in \N}$ be a sequence in $[\underline{\lambda},\overline{\lambda}]\subset~]0,2[$, and consider the following recurrence.
    \begin{equation}\label{eq:algFBHFEP}
	(\forall n\in\N)\quad 
	\begin{array}{l}
		\left\lfloor
		\begin{array}{l}
            \textnormal{ find } (z_n,y_n) \in \gra A 
            \textnormal{ such that }\\
			\left\lfloor
		\begin{array}{l} z_n^* = x_n/\gamma - Cx_n-Dx_n-z_n /\gamma \\
       e_n = y_n - z_n^*,\\        
           \|e_n\|\leq \sigma \|z_n-x_n\|,	
		\end{array}
		\right.\\
        t_n^* = y_n+Dz_n+Cx_n\\
			\delta_n = \scal{x_n-z_n}{t_n^*}-\frac{1}{4\beta}\|z_n-x_n\|^2 \\
			d_n = \begin{cases}
				\dfrac{\delta_n}{\|t^*_n\|^2} t^*_n, &\textnormal{ if }
				\delta_n >0; \\
				0,  &\textnormal{ otherwise } 
			\end{cases}\\
			x_{n+1}=x_n-\lambda_n d_n.
		\end{array}
		\right.
	\end{array}
\end{equation}
\end{algo}
% \begin{algo}\label{alg:FBHFEP-H}In the context of Problem~\ref{pro:mainPD}, let $x _0 \in \H$, $\gamma \in \RPP$, $\sigma \in [0,1[$ and consider the following recurrence.
% \begin{equation}\label{eq:algFBHFEP-H}
% (\forall n\in\N)\quad 
% \begin{array}{l}
% 		\left\lfloor
% 		\begin{array}{l}
% 			%w_n = (\bm{M}_n+\bm{A}+\bm{C})^{-1}(\bm{M}_nx_n) \Leftrightarrow w_n^* \in \bm{A} w_n \\
%             \textnormal{ find } (z_n,y_n) \in \gra\bm{A}
%             \textnormal{ such that }\\
% 			\left\lfloor
% 		\begin{array}{l} z_n^* = x_n/\gamma-Cx_n-Dx_n-z_n/\gamma,\\
%        e_n = y_n - z_n^*,\\        
%            \|e_n\|\leq \sigma \|z_n-x_n\|,	
% 		\end{array}
% 		\right.\\[3mm]
% t_n^*=y_n+Dz_n+Cx_n,\\[1mm]
% \bm H_n=
% \left\{
% z\in\HH:\ 
% \langle z-z_n,t_n^*\rangle
% \le
% \dfrac{1}{4\beta}\|z_n-x_n\|^2
% \right\},\\[4mm]
% \bm W_n=
% \left\{
% z\in\HH:\ 
% \scal{z-x_n}{x_0-x_n} \le 0
% \right\},\\[4mm]
% x_{n+1}=P_{\bm H_n\cap \bm W_n}(x_0).
% \end{array}
% \right.
% \end{array}
% \end{equation}
% \end{algo}
\begin{teo}\label{teo:FBHFImplicit}
 In the context of Problem~\ref{pro:mainPD}, consider the sequence $(x_n)_{n \in \N}$ generated by Algorithm~\ref{alg:FBHFEP}. Suppose that
 \begin{equation}\label{eq:stepsizesFBHFI}
				1-\frac{5\gamma}{4\beta} -\gamma (\zeta+\sigma) >0.
                %\gamma \in  \left]0,\frac{4\beta}{1+\sqrt{1+16(\zeta+\sigma)^2\beta^2}}\right[.
			\end{equation}
Then, $(x_n)_{n \in \N}$ converges weakly to $x \in \zer(A+C+D)$.
 \end{teo}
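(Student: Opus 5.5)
Algorithm~\ref{alg:FBHFEP} is an instance of Algorithm~\ref{alg:NFBE}, so the plan is to apply Theorem~\ref{teo:convergencia}.\ref{teo:convergencia2b} with a suitable identification. We take $\HH=\H$, $\bm A=A+D$ (maximally monotone, since $D$ is monotone, continuous and has full domain), $\bm C=C$, $\bm S=\bm P=\id$, and for every $n\in\N$
\[
\bm M_n=\bm M:=\tfrac{1}{\gamma}\id-D .
\]
We then match the quantities of the two algorithms.

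\textbf{Step 1 (matching the iterations).} Set $w_n:=z_n$ and $v_n:=y_n+Dz_n\in (A+D)z_n=\bm A w_n$. Then
\[
\bm M x_n-\bm M z_n-\bm C z_n=\tfrac{x_n-z_n}{\gamma}-Dx_n+Dz_n-Cz_n .
\]
In Algorithm~\ref{alg:FBHFEP} the forward term is $Cx_n$ rather than $Cz_n$, so we keep the paper's convention and compute the error directly:
\[
v_n-\bigl(\bm Mx_n-\bm Mz_n-Cz_n\bigr)=y_n-\tfrac{x_n-z_n}{\gamma}+Dx_n+Cz_n = e_n + Cz_n - Cx_n ,
\]
with $e_n$ as in \eqref{eq:algFBHFEP}. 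Hence the error of Algorithm~\ref{alg:NFBE} is $\tilde e_n=e_n+Cz_n-Cx_n$, and by the $1/\beta$-Lipschitz continuity of $C$,
\[
\|\tilde e_n\|\le \bigl(\sigma+\tfrac1\beta\bigr)\|z_n-x_n\| .
\]
This is a relative error with constant $\tilde\sigma=\sigma+1/\beta$, which may exceed $1$. To avoid this, we absorb $C$ into the kernel and take instead $\bm M:=\frac1\gamma\id-D-C$. Then $\bm Mx_n-\bm Mz_n-Cz_n=\frac{x_n-z_n}{\gamma}-Dx_n+Dz_n-Cx_n$, which gives exactly $\tilde e_n=e_n$ and error constant $\sigma$. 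With this choice, $t_n^*=v_n+Cx_n=y_n+Dz_n+Cx_n$ and $\delta_n,d_n$ coincide with those of Algorithm~\ref{alg:NFBE} for $\bm S=\bm P=\id$. Assumption~\ref{asume:NFBE}.\ref{asume:NFBE4} follows from Remark~\ref{rem:teomain}.\ref{rem:teomain2} once $\bm M$ is shown to be strongly monotone and Lipschitz.

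\textbf{Step 2 (strong monotonicity and Lipschitz constant of $\bm M$).} This is the main point. The operator $\bm M=\frac1\gamma\id-D-C$ is Lipschitz with constant $\frac1\gamma+\zeta+\frac1\beta$. For strong monotonicity we use $\scal{Dx-Dy}{x-y}\ge0$ together with
\[
\scal{Cx-Cy}{x-y}\le \tfrac1\beta\|x-y\|^2 ,
\]
which gives $\alpha=\frac1\gamma-\frac1\beta$. Theorem~\ref{teo:convergencia}.\ref{teo:convergencia2b} requires $\alpha>\frac1{4\beta}+\sigma$, that is,
\[
1-\tfrac{5\gamma}{4\beta}-\gamma\sigma>0 ,
\]
which is implied by \eqref{eq:stepsizesFBHFI}. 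The extra term $\gamma\zeta$ in \eqref{eq:stepsizesFBHFI} suggests that the authors bound $-\scal{Dx-Dy}{x-y}\ge-\zeta\|x-y\|^2$ instead of using monotonicity; either way the hypothesis suffices. The obstacle I expect is precisely this bookkeeping: deciding whether $C$ sits in $\bm M_n$, and keeping $\delta_n$ consistent with the $\frac1{4\beta}$ term. If the cocoercivity cross term in Theorem~\ref{teo:convergencia}\ref{h_k} needs re-checking for this $\bm M$, note that $\bm H_n$ depends only on $v_n,x_n,w_n$, so part~\ref{h_k} is unaffected.

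\textbf{Step 3 (conclusion).} Theorem~\ref{teo:convergencia}.\ref{teo:convergencia2b} now yields $x_n\rightharpoonup x\in\zer(\bm A+\bm C)=\zer(A+D+C)$, which is the claim.
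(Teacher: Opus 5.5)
Your identification ($\bm A=A+D$, $\bm M_n=\bm M=\frac1\gamma\id-C-D$, $\bm C=C$, $\bm S=\bm P=\id$, $w_n=z_n$, $v_n=y_n+Dz_n$) is exactly the paper's. Your matching of $e_n$, $t_n^*$, $\delta_n$ and the error criterion is correct. The problem is Step~2, which is the only place where the step-size hypothesis enters. Since $D$ appears in $\bm M$ with a minus sign,
\[
\scal{\bm Mx-\bm My}{x-y}=\tfrac1\gamma\|x-y\|^2-\scal{Cx-Cy}{x-y}-\scal{Dx-Dy}{x-y},
\]
so a lower bound requires an \emph{upper} bound on $\scal{Dx-Dy}{x-y}$. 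Monotonicity of $D$ gives $\scal{Dx-Dy}{x-y}\ge 0$, which points the wrong way. Your modulus $\alpha=\frac1\gamma-\frac1\beta$ is therefore false in general. Take $D=\zeta\id$ (monotone and $\zeta$-Lipschitz) and $C=\frac1\beta\id$ ($\beta$-cocoercive). Then $\bm M=(\frac1\gamma-\frac1\beta-\zeta)\id$, whose modulus is exactly $\frac1\gamma-\frac1\beta-\zeta$.

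The valid argument is the one you mention only as an aside. By Cauchy--Schwarz and the $\zeta$-Lipschitz continuity of $D$, $\scal{Dx-Dy}{x-y}\le\zeta\|x-y\|^2$. Hence $\bm M$ is $\alpha$-strongly monotone with $\alpha=\frac1\gamma-\frac1\beta-\zeta$. The requirement $\alpha>\frac1{4\beta}+\sigma$ of Theorem~\ref{teo:convergencia}.\ref{teo:convergencia2b} is then \emph{equivalent} to \eqref{eq:stepsizesFBHFI}. So the $\gamma\zeta$ term is not slack from a cruder estimate by the authors; it is exactly what the proof needs. Your claim that \emph{either way the hypothesis suffices} is incorrect, since only the Lipschitz route works. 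With this replacement, and dropping the detour through $\bm M=\frac1\gamma\id-D$, your proof is the paper's proof.
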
\begin{proof}
   Let $\gamma \in \R$ and define the operators
	\begin{equation}\label{eq:defAPD1}
		\begin{aligned}
		&\bm{A} \colon \H \to 2^\H \colon x \mapsto Ax+Dx,\\
&\bm{M} \colon \H \to \H \colon x \mapsto x/\gamma-Cx-Dx,\\
&\bm{C} \colon \H \to \H \colon x \mapsto Cx,\\
&\bm{S} \colon \H \to \H \colon x\mapsto x.
		\end{aligned}
	\end{equation}  Since $D$ is monotone and Lipschitz, $\bm{A}$ is maximally monotone \cite[Corollary~25.5]{bauschkebook2017}. Now, for every $n \in \N$,
define 
\begin{equation}\label{eq:defvar}
    \gamma_n = \gamma, \bm{M}_n = \bm{M},  w_n = z_n,  v_n = y_n+ Dw_n, \textnormal{ and } w_n^* = z_n^*+Dw_n. 
\end{equation}
% \begin{align*}
%     &w_n = z_n,\\
%     &v_n = y_n+Dw_n,\\
%     &w_n^* = z_n^*+Dw_n\\
%     &\bm{M}_n = \bm{M}\\
%     &\gamma_n = \gamma.
% \end{align*}
Hence, it follows from \eqref{eq:algFBHFEP} that, for every $n \in \N$, 
$v_n \in \bm{A}w_n$, $w_n^* = \bm{M}_nx_n - \bm{M}_n w_n -\bm{C}w_n$, $e_n = v_n - w_n^*$, and $t_n^* = v_n+\bm{C}x_n$.
% \begin{align*}
% &v_n \in \bm{A}w_n\\
% &w_n^* = \bm{M}_nx_n - \bm{M}_n w_n -\bm{C}w_n\\
% &e_n = v_n - w_n^*,\\        
% &\|e_n\|_{\bm{S}^{-1}}\leq \sigma \|w_n-x_n\|_{\bm{S}}\\
% &x_{n+1}=x_n-\gamma_n \bm{S}^{-1}(v_n+\bm{C}x_n),
% \end{align*}
Therefore, Algorithm~\ref{alg:FBHFEP} is a particular instance of Algorithm~\ref{alg:NFBE}. Moreover, since $C$ is $1/\beta$-Lipschitz and $D$ is $\zeta$-Lipschitz, we have that $\bm{M}$ is $(1/\gamma+1/\beta+\zeta)$-Lipschitz continuous, and for every $(x,y) \in \H^2$
\begin{align*}
    \gamma\scal{\bm Mx-\bm My}{x-y} &=\|x-y\|^2-\gamma\scal{Cx-Cy}{x-y}-\gamma\scal{Dx-Dy}{x-y}\\
    &\geq \left(1-\frac{\gamma}{\beta}-\gamma\zeta\right)\|x-y\|^2,
    % \scal{\bm  z-\bm R\widetilde{z}}{z-\widetilde{z}} -  \gamma\scal{\bm C z-\bm C\widetilde{z}}{z-\widetilde{z}}  + \|z-\widetilde{z}\|^2_{\bm{S}}\\
    % &\geq -\|\bm R z-\bm R\widetilde{z}\|_{\bm{S}^{-1}}\|\|z-\widetilde{z}\|_{\bm{S}}- \gamma\|\bm C z-\bm C\widetilde{z}\|_{\bm{S}^{-1}}\|z-\widetilde{z}\|_{\bm{S}} +   \|z-\widetilde{z}\|^2_{\bm{S}}\\
    % & \geq \left(1-\widehat{\zeta} - \frac{\gamma}{\widehat{\beta}}  \right)\|z-\widetilde{z}\|^2_{\bm{S}}.
\end{align*}
thus, $\bm{M}$ is $\alpha$-strongly monotone for $\alpha = 1/\gamma - 1/\beta -\zeta$. In view of \eqref{eq:stepsizesFBHFI} we have $\alpha \in ]1/(4\beta) + \sigma , +\infty[$ and the result follows by Theorem~\ref{teo:convergencia}.\ref{teo:convergencia2b}.
% Moreover, since $\zeta_n = \zeta\gamma$ for every $n \in \N$, \eqref{eq:stepsizes} reduces to  $1-\varepsilon-\gamma^2(\zeta+\sigma)^2 >0$  and $2\beta \varepsilon \geq \gamma$. Choosing $\varepsilon$ such that $(1-\varepsilon)/(\zeta+\sigma)^2=4\beta^2\varepsilon^2$, both inequalities hold for $\gamma$ as in \eqref{eq:stepsizesFBHF}. 
% The result follows from Theorem~\ref{teo:NFBEE}. 
\end{proof}
Now, we present an explicit version of Algorithm~\ref{alg:FBHFEP}, whose convergence follows directly from Theorem~\ref{teo:NFBEE}. This scheme can be interpreted as an inexact variant of the FBHF algorithm.
\begin{algo}\label{alg:FBHFE} In the context of Problem~\ref{pro:mainPD}, let $x_0 \in \H$, $\gamma \in \RPP$, $\sigma \in [0,1[$, and consider the following recurrence.
    \begin{equation}\label{eq:algFBHFE}
	(\forall n\in\N)\quad 
	\begin{array}{l}
		\left\lfloor
		\begin{array}{l}
            \textnormal{ find } (z_n,y_n) \in \gra A 
            \textnormal{ such that }\\
			\left\lfloor
		\begin{array}{l} z_n^* = x_n/\gamma - Cx_n-Dx_n-z_n /\gamma \\
       e_n = y_n - z_n^*,\\        
           \|e_n\|\leq \sigma \|z_n-x_n\|,	
		\end{array}
		\right.\\
			x_{n+1}=z_n+\gamma (Dx_n-Dz_n-e_n).
		\end{array}
		\right.
	\end{array}
\end{equation}
\end{algo}
Note that, when $\sigma=0$, and hence $e_n=0$, \eqref{eq:algFBHFE} coincides with \cite[Eq. (2.12)]{BricenoDavis2018} for $X = \H$. The following result establishes the convergence of Algorithm~\ref{alg:FBHFE}.
\begin{teo}
 In the context of Problem~\ref{pro:mainPD}, consider the sequence $(x_n)_{n \in \N}$ generated by Algorithm~\ref{alg:FBHFE}. Suppose that
 \begin{equation}\label{eq:stepsizesFBHF}
				\gamma \in  \left]0,\frac{4\beta}{1+\sqrt{1+16(\zeta+\sigma)^2\beta^2}}\right[.
			\end{equation}
Then, $(x_n)_{n \in \N}$ converges weakly to $x \in \zer(A+C+D)$.
 \end{teo}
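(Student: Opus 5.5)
The plan is to realize Algorithm~\ref{alg:FBHFE} as a particular instance of Algorithm~\ref{alg:NFBEE} and then invoke Theorem~\ref{teo:NFBEE}. I would reuse the operators of \eqref{eq:defAPD1}: $\bm A = A+D$, which is maximally monotone by \cite[Corollary~25.5]{bauschkebook2017}, $\bm M = \id/\gamma - C - D$, $\bm C = C$ and $\bm S = \id$. For every $n$ I would set $\bm M_n=\bm M$ and $\gamma_n=\gamma$, and use the identifications \eqref{eq:defvar}, namely $w_n=z_n$, $v_n=y_n+Dz_n$ and $w_n^*=z_n^*+Dz_n$.

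First I check that the recurrences agree. With these choices $v_n\in\bm A w_n$ and $w_n^*=\bm M x_n-\bm M w_n-\bm C w_n$ hold. The error $e_n=v_n-w_n^*=y_n-z_n^*$ is unchanged, and since $\bm S=\id$ the relative-error test is exactly $\|e_n\|\le\sigma\|z_n-x_n\|$. The explicit update of \eqref{eq:algWRE} is $x_{n+1}=x_n-\gamma(v_n+Cx_n)$. Substituting $y_n=e_n+x_n/\gamma-Cx_n-Dx_n-z_n/\gamma$ and $v_n=y_n+Dz_n$, the terms $x_n$ and $\gamma Cx_n$ cancel. What remains is $x_{n+1}=z_n+\gamma(Dx_n-Dz_n-e_n)$, which is the update in \eqref{eq:algFBHFE}.

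Next I verify Assumption~\ref{asume:NFBEE}. We have $\gamma(\bm M+\bm C)-\bm S=-\gamma D$, which is $\gamma\zeta$-Lipschitz, so $\zeta_n=\gamma\zeta$. This constant must lie in $]0,1[$, which will follow from the step-size bound below. Cocoercivity of $\bm C=C$ with respect to $\bm S=\id$ is given.

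Finally I check \eqref{eq:stepsizes}, which is the only genuinely quantitative step. Choose $\varepsilon=\gamma/(2\beta)$, so that $2\beta\varepsilon\ge\gamma$ holds with equality. The remaining requirement is $1-\gamma/(2\beta)-\gamma^2(\zeta+\sigma)^2\ge\delta$ for some $\delta\in]0,1[$. Since $\gamma_n$ is constant, this reduces to the strict inequality $(\zeta+\sigma)^2\gamma^2+\gamma/(2\beta)-1<0$, after which we may take $\delta$ equal to the left-hand quantity $1-\gamma/(2\beta)-\gamma^2(\zeta+\sigma)^2$. Write $k=\zeta+\sigma$. The positive root of this quadratic is
\begin{equation*}
\frac{\sqrt{1/(4\beta^2)+4k^2}-1/(2\beta)}{2k^2}
=\frac{2}{1/(2\beta)+\sqrt{1/(4\beta^2)+4k^2}}
=\frac{4\beta}{1+\sqrt{1+16k^2\beta^2}},
\end{equation*}
obtained by rationalizing the numerator. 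Hence \eqref{eq:stepsizesFBHF} is exactly the condition needed. This also gives $\gamma^2\zeta^2<1$, so $\zeta_n=\gamma\zeta<1$ as required above. Theorem~\ref{teo:NFBEE} then yields weak convergence to a zero of $\bm A+\bm C=A+C+D$.

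The main obstacle is the algebraic matching of the update and the choice of $\varepsilon$ that makes the step-size bound \eqref{eq:stepsizesFBHF} come out sharp. Both reduce to the short computations above.
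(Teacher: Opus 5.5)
Your proposal is correct and follows essentially the same route as the paper: you embed Algorithm~\ref{alg:FBHFE} into Algorithm~\ref{alg:NFBEE} via the operators \eqref{eq:defAPD1} and the identifications \eqref{eq:defvar}, note that $\zeta_n=\gamma\zeta$, and apply Theorem~\ref{teo:NFBEE}. The only difference is your choice $\varepsilon=\gamma/(2\beta)$, whereas the paper fixes $\varepsilon$ through $(1-\varepsilon)/(\zeta+\sigma)^2=4\beta^2\varepsilon^2$; both choices give exactly the threshold in \eqref{eq:stepsizesFBHF}, and your explicit check that $\gamma\zeta<1$ is a detail the paper leaves implicit.
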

 \begin{proof} Considering the operators defined in \eqref{eq:defAPD1}, we have that $\gamma\bm{M}-\gamma\bm{C}-\bm{S}=-\gamma D$ is $(\gamma \zeta)$-Lipschitz continuous.   
%Let $\gamma \in \R$ and define the operators
% 	\begin{equation}\label{eq:defAPD1}
% 		\begin{aligned}
% 		&\bm{A} \colon \H \to 2^\H \colon x \mapsto Ax+Dx,\\
% &\bm{M} \colon \H \to \H \colon x \mapsto x/\gamma-Cx-Dx,\\
% &\bm{C} \colon \H \to \H \colon x \mapsto Cx,\\
% &\bm{S} \colon \H \to \H \colon x\mapsto x.
% 		\end{aligned}
% 	\end{equation}    
% It follows that $\bm{A}$ is maximally monotone and that $\gamma\bm{M}-\gamma\bm{C}-\bm{S}=-\gamma D$ is $(\gamma \zeta)$-Lipschitz continuous. Now, for every $n \in \N$,
% define $\gamma_n = \gamma$, $\bm{M}_n = \bm{M}$,  $w_n = z_n$,  $v_n = y_n+ Dw_n$, and $w_n^* = z_n^*+Dw_n$. 
% \begin{align*}
%     &w_n = z_n,\\
%     &v_n = y_n+Dw_n,\\
%     &w_n^* = z_n^*+Dw_n\\
%     &\bm{M}_n = \bm{M}\\
%     &\gamma_n = \gamma.
% \end{align*}
Moreover, defining $\gamma_n$, $\bm{M}_n$, $w_n$, $v_n$ and $w_n^*$ as in \eqref{eq:defvar}, we have that $x_{n+1}=x_n-\gamma_n \bm{S}^{-1}(v_n+\bm{C}x_n)$, for every $n \in \N$.
Hence, Algorithm~\ref{alg:FBHFE} is a particular instance of Algorithm~\ref{alg:NFBEE}.
% \begin{align*}
% &v_n \in \bm{A}w_n\\
% &w_n^* = \bm{M}_nx_n - \bm{M}_n w_n -\bm{C}w_n\\
% &e_n = v_n - w_n^*,\\        
% &\|e_n\|_{\bm{S}^{-1}}\leq \sigma \|w_n-x_n\|_{\bm{S}}\\
% &x_{n+1}=x_n-\gamma_n \bm{S}^{-1}(v_n+\bm{C}x_n),
% \end{align*}
Moreover, since $\zeta_n = \zeta\gamma$ for every $n \in \N$, \eqref{eq:stepsizes} reduces to  $1-\varepsilon-\gamma^2(\zeta+\sigma)^2 >0$  and $2\beta \varepsilon \geq \gamma$. Choosing $\varepsilon$ such that $(1-\varepsilon)/(\zeta+\sigma)^2=4\beta^2\varepsilon^2$, both inequalities hold for $\gamma$ as in \eqref{eq:stepsizesFBHF}. 
The result follows from Theorem~\ref{teo:NFBEE}. 
\end{proof}
\begin{rem} \begin{enumerate}
    \item In the case where $C = 0 $, Algorithm~\ref{alg:FBHFE} reduces to 
        \begin{equation}\label{eq:algFBFE}
	(\forall n\in\N)\quad 
	\begin{array}{l}
		\left\lfloor
		\begin{array}{l}
            \textnormal{ find } (z_n,y_n) \in \gra A 
            \textnormal{ such that }\\
			\left\lfloor
		\begin{array}{l} z_n^* = x_n/\gamma -Dx_n-z_n /\gamma \\
       e_n = y_n - z_n^*,\\        
           \|e_n\|\leq \sigma \|z_n-x_n\|,	
		\end{array}
		\right.\\
			x_{n+1}=z_n+\gamma (Dx_n-Dz_n-e_n),
		\end{array}
		\right.
	\end{array}
\end{equation}
which is an inexact resolvent version of the FBF algorithm. In this case, by taking $\beta \to +\infty$, the convergence of the algorithm is guaranteed for $\gamma \in  \left]0,\frac{1}{\zeta+\sigma}\right[$.
    \item When $D = 0 $, Algorithm~\ref{alg:FBHFE} reduces to 
        \begin{equation}\label{eq:algFBE}
	(\forall n\in\N)\quad 
	\begin{array}{l}
		\left\lfloor
		\begin{array}{l}
            \textnormal{ find } (z_n,y_n) \in \gra A 
            \textnormal{ such that }\\
			\left\lfloor
		\begin{array}{l} z_n^* = x_n/\gamma-Cx_n-z_n /\gamma \\
       e_n = y_n - z_n^*,\\        
           \|e_n\|\leq \sigma \|z_n-x_n\|,	
		\end{array}
		\right.\\
			x_{n+1}=z_n-\gamma e_n.
		\end{array}
		\right.
	\end{array}
\end{equation}
This recurrence is an inexact resolvent version of the FB algorithm and its convergence is guaranteed for $\gamma \in  \left]0,\frac{4\beta}{1+\sqrt{1+16\sigma^2\beta^2}}\right[$.
\end{enumerate}
\end{rem}
\subsection{FPDHF, Condat--V\~u, and Chambolle--Pock with error} 
In this subsection, we introduce an inexact variant of the FPDHF scheme, in which the resolvent evaluations in the backward steps are computed approximately. This leads naturally to inexact versions of the Condat--V\~u and Chambolle--Pock algorithms. 

First, we establish the following result, which identifies the operator properties required to apply the proposed inexact warped resolvent framework to the primal-dual problem \ref{pro:mainPD}.

\begin{prop}\label{prop:operators} In the context of Problem~\ref{pro:mainPD}, let $(\gamma,\tau) \in \RPP^2$, $\sigma \in [0,1[$, set $\HH = \H\times  \G$, and define the operators:
\begin{equation}\label{eq:defAPD}
\begin{aligned}
\bm A \colon \HH &\to 2^{\HH} :
(x,u)\mapsto (Ax+Dx+L^*u)\times(B^{-1}u-Lx),\\
\bm M \colon \HH &\to \HH :
(x,u)\mapsto \left(\frac{x}{\gamma}-Cx-Dx-L^*u,\,-Lx+\gamma LDx+\frac{u}{\tau}\right),\\
\bm C \colon \HH &\to \HH :
(x,u)\mapsto (Cx,0),\\
\bm S \colon \HH &\to \HH :
(x,u)\mapsto \left(x-\gamma L^*u,\,-\gamma Lx+\frac{\gamma}{\tau}u\right),\\
\bm T \colon \HH &\to \HH :
(x,u)\mapsto \left(\frac{x}{\gamma}-Dx,\frac{u}{\gamma}\right),\\
 \bm R \colon \HH &\to \HH :
(x,u)\mapsto \gamma (\bm M+\bm C)-\bm S.
\end{aligned}
\end{equation}
Then, the following assertions hold.
\begin{enumerate}
    \item\label{prop:operators1} $\bm{A}$ is maximally monotone.
    \item\label{prop:operators2} If $1-\gamma \tau \|L\|^2>0$,  $\bm{S}$ is a strongly monotone self-adjoint bounded linear operator and 
        \begin{equation}\label{eq:Sm1}
        \bm{S}^{-1}  \colon \HH \to \HH \colon (z,v) \mapsto \big((\id-\gamma\T L^*L)^{-1}(z+\tau L^*v),\tau (\id-\gamma\T LL^*)^{-1}(Lz+v/\gamma)\big).
    \end{equation}
\item\label{prop:operators3} $\bm{M}+\bm{C}=\bm{S}\circ \bm{T}$.
\item\label{prop:operators4}  $\bm C$ is $\widehat{\beta}$-cocoercive  with respect to $\bm S$ for $\widehat{\beta}=\beta(1-\tau\gamma\|L\|^2)$.
\item\label{prop:operators5} $\bm R$ is $\widehat{\zeta}$-Lipschitz with respect to $\bm S$ for $\widehat{\zeta}=\gamma\zeta(1-\tau\gamma\|L\|^2)^{-1/2}$.
\item\label{prop:operators6} $\bm M$ is $ \alpha$-strongly monotone with respect to $\bm S$ for $\alpha = \frac{1-\widehat{\zeta}}{\gamma}-\frac{1}{\widehat{\beta}}$.
\end{enumerate}
\end{prop}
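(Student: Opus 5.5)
The six items are essentially independent, so I will check them one at a time. Most of them are direct calculations from \eqref{eq:defAPD}; the only place needing care is the comparison of $\bm S$-norms with the standard norm on $\HH$.

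\emph{Items \ref{prop:operators1} and \ref{prop:operators3}.} For \ref{prop:operators1}, I write $\bm A=(A\times B^{-1})+\bm K+(D,0)$, where $\bm K(x,u)=(L^*u,-Lx)$ is bounded, linear and skew. The operator $A\times B^{-1}$ is maximally monotone as a product of maximally monotone operators, and $\bm K+(D,0)$ is monotone and Lipschitz with full domain. Hence \cite[Corollary~25.5]{bauschkebook2017} gives maximal monotonicity of the sum. For \ref{prop:operators3}, I compute
\[
\bm S\bm T(x,u)=\bigl(x/\gamma-Dx-L^*u,\;-Lx+\gamma LDx+u/\tau\bigr).
\]
This equals $\bm M(x,u)+(Cx,0)=(\bm M+\bm C)(x,u)$, so \ref{prop:operators3} is a one-line check.

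\emph{Item \ref{prop:operators2}.} Self-adjointness of $\bm S$ is immediate from its block form. For strong monotonicity I use Young's inequality with a free parameter:
\[
\scal{\bm S(x,u)}{(x,u)}=\|x\|^2-2\gamma\scal{Lx}{u}+\tfrac{\gamma}{\tau}\|u\|^2\geq \|x\|^2-2\gamma\|L\|\|x\|\|u\|+\tfrac{\gamma}{\tau}\|u\|^2 .
\]
The right-hand side is a quadratic form in $(\|x\|,\|u\|)$, and it is positive definite because its discriminant satisfies $\gamma^2\|L\|^2<\gamma/\tau$, i.e.\ $1-\gamma\tau\|L\|^2>0$. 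To verify \eqref{eq:Sm1}, I apply $\bm S$ to the proposed formula. The key tool is the push-through identity $L(\id-\gamma\tau L^*L)^{-1}=(\id-\gamma\tau LL^*)^{-1}L$, together with the fact that both inverses exist by a Neumann series, since $\gamma\tau\|L\|^2<1$.

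\emph{Items \ref{prop:operators4}--\ref{prop:operators6}.} The main step here, and the main obstacle of the whole proof, is to establish the norm comparison
\[
\|(z,0)\|_{\bm S^{-1}}^2\le (1-\gamma\tau\|L\|^2)^{-1}\|z\|^2 .
\]
From \eqref{eq:Sm1}, $\|(z,0)\|_{\bm S^{-1}}^2=\scal{(\id-\gamma\tau L^*L)^{-1}z}{z}$. Since $\id-\gamma\tau L^*L\succeq(1-\gamma\tau\|L\|^2)\id$, the comparison follows. As a companion bound I also need $\|x\|^2\le\|(x,u)\|_{\bm S}^2/(1-\gamma\tau\|L\|^2)$, which comes from minimizing $\|(x,u)\|_{\bm S}^2$ over $u$.

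Granting these bounds, each remaining item is short:
\begin{itemize}
\item[\ref{prop:operators4}] Using cocoercivity of $C$ and the comparison,
\[
\scal{\bm C p-\bm C q}{p-q}=\scal{Cx-Cy}{x-y}\ge\beta\|Cx-Cy\|^2\ge\beta(1-\gamma\tau\|L\|^2)\|\bm Cp-\bm Cq\|_{\bm S^{-1}}^2 .
\]
\item[\ref{prop:operators5}] By \ref{prop:operators3}, $\bm R=\bm S(\gamma\bm T-\id)$, and $\gamma\bm T-\id=(-\gamma D,0)$. Hence
\[
\|\bm Rp-\bm Rq\|_{\bm S^{-1}}=\|((\gamma\bm T-\id)p-(\gamma\bm T-\id)q)\|_{\bm S}=\gamma\|(Dx-Dy,0)\|_{\bm S}=\gamma\|Dx-Dy\|\le\gamma\zeta\|x-y\| .
\]
Here I used $\|(z,0)\|_{\bm S}=\|z\|$, which holds exactly. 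Bounding $\|x-y\|$ by the companion estimate gives the constant $\widehat\zeta=\gamma\zeta(1-\gamma\tau\|L\|^2)^{-1/2}$.
\item[\ref{prop:operators6}] By \cite[Proposition~2.1]{MorinBanertGiselsson2022} applied to $\bm R$, the operator $\bm M+\bm C$ is $(1-\widehat\zeta)/\gamma$-strongly monotone with respect to $\bm S$. Subtracting the monotonicity of $\bm C$, via Cauchy--Schwarz in the $\bm S$ metric, I get $\scal{\bm Cp-\bm Cq}{p-q}\le\widehat\beta^{-1}\|p-q\|_{\bm S}^2$ from \ref{prop:operators4}. This yields $\alpha=(1-\widehat\zeta)/\gamma-1/\widehat\beta$.
\end{itemize}
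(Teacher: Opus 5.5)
Your proposal is correct and is essentially the paper's proof. Items 3, 5 and 6 are the same computations, in particular $\bm R(x,u)=\bm S(-\gamma Dx,0)$ together with $\|(x,u)\|_{\bm S}^2\ge(1-\gamma\tau\|L\|^2)\|x\|^2$; for items 1, 2 and 4 you verify by hand what the paper takes from \cite[Proposition~2.7(iii)]{briceno2011SIAM} and \cite[Lemma~6.1 and Corollary~6.1]{MorinBanertGiselsson2022}. One step worth making explicit: $\bm S\circ F=\id$ for your candidate $F$ identifies $F$ with $\bm S^{-1}$ only because $\bm S$ is injective, which you already have from its strong monotonicity.
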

\begin{proof}
\ref{prop:operators1} Since $D$ is monotone and Lipschitz, $A+D$ is maximally monotone \cite[Corollary~25.5]{bauschkebook2017}. The result follows by \cite[Proposition~2.7(iii)]{briceno2011SIAM}.

\ref{prop:operators2} This follows from \cite[Lemma~6.1]{MorinBanertGiselsson2022}.

\ref{prop:operators3} Direct.

\ref{prop:operators4} See \cite[Corollary~6.1]{MorinBanertGiselsson2022}.

\ref{prop:operators5} Let $z=(x,u) \in \HH$. We have $ \bm Rz
=
%\gamma(\bm M+\bm C)z-\bm S z
%=
(-\gamma Dx,\gamma^2LDx)=\bm{S}(-\gamma Dx,0)$. 
Hence, for $\tilde z=(\tilde x,\tilde u) \in \HH$, it follows from the $\zeta$-Lipschitz continuity of $D$ and \cite[Lemma~6.1]{MorinBanertGiselsson2022} that
\begin{align*}
\|\bm Rz-\bm R\tilde z\|_{\bm S^{-1}}
&= \|(-\gamma (Dx-D\tilde{x}),0)\|_{\bm{S}}\\
&=
\gamma\|Dx-D\tilde x\|\\
&\leq \gamma \zeta\|x-\tilde x\|\\
&\leq \frac{\gamma\zeta}{\sqrt{1-\tau\gamma\|L\|^2}} \|(x-\tilde x,u-\tilde u)\|_{\bm{S}}\\
&= \frac{\gamma\zeta}{\sqrt{1-\tau\gamma\|L\|^2}} \|z-\tilde z\|_{\bm{S}}
\end{align*}
and the result follows.

\ref{prop:operators6} Let $(z,\widetilde{z}) \in \HH^2$. It follows from \ref{prop:operators4} and \ref{prop:operators5} that 
\begin{align*}
    \gamma\scal{\bm Mz-\bm M\widetilde{z}}{z-\widetilde{z}} &= \scal{\bm R z-\bm R\widetilde{z}}{z-\widetilde{z}} -  \gamma\scal{\bm C z-\bm C\widetilde{z}}{z-\widetilde{z}}  + \|z-\widetilde{z}\|^2_{\bm{S}}\\
    &\geq -\|\bm R z-\bm R\widetilde{z}\|_{\bm{S}^{-1}}\|z-\widetilde{z}\|_{\bm{S}}- \gamma\|\bm C z-\bm C\widetilde{z}\|_{\bm{S}^{-1}}\|z-\widetilde{z}\|_{\bm{S}} +   \|z-\widetilde{z}\|^2_{\bm{S}}\\
    & \geq \left(1-\widehat{\zeta} - \frac{\gamma}{\widehat{\beta}}  \right)\|z-\widetilde{z}\|^2_{\bm{S}}.
\end{align*}
The result follows.
\end{proof}
% \textcolor{red}{\begin{equation*}
%     \frac{1}{\gamma}\left(1-\widehat{\zeta} - \frac{\gamma}{\widehat{\beta}}  \right) > \frac{1}{4\widehat{\beta}} + \sigma \Leftrightarrow 1-\frac{5\gamma}{4\widehat\beta}-(\widehat{\zeta}+\gamma \sigma) >0
% \end{equation*}
% parece ser más restrictiva que \eqref{eq:stepsizes} tomando $\varepsilon = \frac{\gamma}{2\beta}$.
% }
The following algorithm is a primal-dual version of Algorithm~\ref{alg:NFBE}.
\begin{algo}\label{alg:FPDHF} In the context of Problem~\ref{pro:mainPD}, let $(x_0,u_0) \in \H\times\G$, $(\gamma,\tau) \in \RPP^2$, $\sigma \in [0,1[$, let $(\lambda_n)_{n \in \N}$ be a sequence in $[\underline{\lambda},\overline{\lambda}]\subset~]0,2[$, and consider the following recurrence.
    \begin{equation}\label{eq:algFPDHF}
	(\forall n\in\N)\quad 
	\begin{array}{l}
		\left\lfloor
		\begin{array}{l}
            \textnormal{ find } (p_n,y_n) \in \gra A 
           \textnormal{ and } (q_n,v_n) \in \gra B^{-1}  \textnormal{ such that }\\
			\left\lfloor
		\begin{array}{l} p_n^{*} = x_n/\gamma - Cx_n-Dx_n-L^*u_n-p_n /\gamma \\
        s_n = p_n+\gamma (Dx_n-Dp_n) \\
        q_n^{*} =L(p_n+s_n-x_n)-q_n/\tau+u_n/\tau\\
        (e_n^1,e_n^2) = (y_n-p_n^{*},v_n-q_n^{*})\\
           \|(e_n^1,e_n^2)\|_{\bm{S}^{-1}}\leq \sigma \|(p_n-x_n,q_n-u_n)\|_{\bm{S}},	
		\end{array}
		\right.\\
(t_n^1,t_n^2) = (y_n+Dp_n+L^*q_n+Cx_n , v_n-Lp_n)\\
			\delta_n = \scal{x_n-p_n}{t_n^1}+\scal{u_n-q_n}{t_n^2}-\frac{1}{4 \widehat{\beta}}\|(x_n-p_n,u_n-q_n)\|^2_{{\bm{S}}} \\
			d_n = \begin{cases}
				\dfrac{\delta_n}{\|(t_n^1,t_n^2)\|^2_{\bm{S}^{-1}}}  \bm{S}^{-1}(t_n^1,t_n^2), &\textnormal{ if }
				\delta_n >0; \\
				0,  &\textnormal{ otherwise } 
			\end{cases}\\
			(x_{n+1},u_{n+1})=(x_n,u_n)-\lambda_n d_n
            \end{array}
		\right.
	\end{array}
\end{equation}
\end{algo}
\begin{teo}\label{thm:FPDHF-conv}
In the context of Problem~\ref{pro:mainPD}, let $\bigl((x_n,u_n)\bigr)_{n\in\N}$ be generated by Algorithm~\ref{alg:FPDHF}. Set $\widehat{\beta}=\beta(1-\tau\gamma\|L\|^2)$, $\widehat{\zeta}=\gamma \zeta (1-\tau\gamma\|L\|^2)^{-1/2}$, and assume that
\begin{equation}\label{eq:step-FPDHF}
1-\frac{5\gamma}{4\widehat\beta}-(\widehat{\zeta}+\gamma \sigma)>0.
%1-\tau\gamma\|L\|^2>0
\end{equation}
%and that there exists $\varepsilon\in(0,1)$ such that
%\begin{equation}\label{eq:cond-FPDHF}
%1-\varepsilon - \gamma^2 \left(\frac{\zeta}{\sqrt{1-\tau\gamma\|L\|^2}}
%+\sigma\right)^2 >0,
%\qquad
%2\beta\varepsilon(1-\tau\gamma\|L\|^2)\ge \gamma.
%\end{equation}
Then the sequence $\bigl((x_n,u_n)\bigr)_{n\in\N}$ converges weakly to a solution of Problem~\ref{pro:mainPD}.
\end{teo}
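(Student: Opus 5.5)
Our plan is to realize Algorithm~\ref{alg:FPDHF} as an instance of Algorithm~\ref{alg:NFBE} in $\HH=\H\times\G$, with the operators of Proposition~\ref{prop:operators}, $\bm P=\bm S$, $\bm M_n\equiv\bm M$, and $\widehat\beta$ as cocoercivity constant. We then apply Theorem~\ref{teo:convergencia} under condition \ref{teo:convergencia2b}. First, \eqref{eq:step-FPDHF} forces $\widehat\zeta<1$ and $\gamma/\widehat\beta<1$. Since $\widehat\beta>0$ is required for Proposition~\ref{prop:operators}.\ref{prop:operators4} to make sense (i.e. $1-\gamma\tau\|L\|^2>0$), Proposition~\ref{prop:operators}.\ref{prop:operators2} gives $\bm S\in\mathcal P(\HH)$ with the explicit inverse \eqref{eq:Sm1}. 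Proposition~\ref{prop:operators}.\ref{prop:operators1} gives maximal monotonicity of $\bm A$, and \ref{prop:operators4} gives $\widehat\beta$-cocoercivity of $\bm C$ with respect to $\bm S$. Moreover, a point $(x,u)\in\zer(\bm A+\bm C)$ is precisely a solution of \eqref{eq:mainPD}, and $\lambda_n\in[\underline\lambda,\overline\lambda]$. This verifies Assumption~\ref{asume:NFBE}, except item \ref{asume:NFBE4}, which follows from Remark~\ref{rem:teomain}.\ref{rem:teomain2} once \ref{teo:convergencia2b} is established.

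The core step is the dictionary between the two recurrences. We set $w_n=(p_n,q_n)$ and $v_n=(y_n+Dp_n+L^*q_n,\ v_n^{\rm alg}-Lp_n)$, the latter being the algorithm's $v_n$ in the second component. Then $v_n\in\bm A w_n$, because $y_n\in Ap_n$ and $v_n^{\rm alg}\in B^{-1}q_n$. Next we compute $\bm M x_n-\bm M w_n-\bm C w_n$ with $x_n$ standing for $(x_n,u_n)$. Its first component is
\[
(x_n-p_n)/\gamma-Cx_n-Dx_n+Dp_n-L^*u_n+L^*q_n=p_n^*+Dp_n+L^*q_n .
\]
Its second component is
\[
-L(x_n-p_n)+\gamma L(Dx_n-Dp_n)+(u_n-q_n)/\tau .
\]
Using $s_n=p_n+\gamma(Dx_n-Dp_n)$, this equals $L(p_n+s_n-x_n)-Lp_n+(u_n-q_n)/\tau=q_n^*-Lp_n$. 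Hence $v_n-w_n^*=(e_n^1,e_n^2)$, so the relative-error test in \eqref{eq:algFPDHF} is exactly the one in \eqref{eq:algWR} with $\bm P=\bm S$. Likewise $t_n^*=v_n+\bm C x_n=(t_n^1,t_n^2)$, and $\delta_n$, $d_n$ and the update coincide with those of Algorithm~\ref{alg:NFBE}. We expect this bookkeeping, especially checking that the $\gamma LD$ term in $\bm M$ is what produces $s_n$, to be the main place where errors could hide, though it is routine.

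It remains to check \ref{teo:convergencia2b}. By Proposition~\ref{prop:operators}.\ref{prop:operators6}, $\bm M$ is $\alpha$-strongly monotone with respect to $\bm S$, where $\alpha=(1-\widehat\zeta)/\gamma-1/\widehat\beta$. Multiplying by $\gamma>0$, the requirement $\alpha>1/(4\widehat\beta)+\sigma$ is equivalent to $1-\widehat\zeta-\gamma/\widehat\beta-\gamma/(4\widehat\beta)-\gamma\sigma>0$, which is precisely \eqref{eq:step-FPDHF}. Lipschitz continuity of $\bm M$ is immediate, since it is the sum of the bounded linear map $(x,u)\mapsto(x/\gamma-L^*u,\,-Lx+u/\tau)$, the map $(x,u)\mapsto(-Dx,\gamma LDx)$ (Lipschitz with constant depending on $\zeta$ and $\|L\|$), and $-\bm C$, which is $1/\beta$-Lipschitz. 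Theorem~\ref{teo:convergencia}.\ref{teo:convergencia2b} then yields weak convergence of $((x_n,u_n))_{n\in\N}$ to a point of $\zer(\bm A+\bm C)$, i.e. to a solution of Problem~\ref{pro:mainPD}.
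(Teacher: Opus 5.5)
Your proposal is correct and follows the paper's proof essentially step for step. It uses the same embedding into Algorithm~\ref{alg:NFBE} with $\bm P=\bm S$, $\bm M_n\equiv\bm M$ and cocoercivity constant $\widehat\beta$, the same variable dictionary as \eqref{eq:defvarPD}, and the same reduction of \eqref{eq:step-FPDHF} to $\alpha>1/(4\widehat\beta)+\sigma$ via Proposition~\ref{prop:operators}.\ref{prop:operators6} before invoking Theorem~\ref{teo:convergencia}.\ref{teo:convergencia2b}.

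Two of your steps are small gains in rigor rather than a different route. First, you check directly that $\bm M$ is Lipschitz, whereas the paper loosely calls it $\widehat\zeta$-Lipschitz, although Proposition~\ref{prop:operators}.\ref{prop:operators5} concerns $\bm R$. Second, you explicitly verify Assumption~\ref{asume:NFBE}.\ref{asume:NFBE4}.
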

\begin{proof}
    Set $\HH=\H\times\G$ and consider the operators $\bm{A}$, $\bm{M}$, $\bm{C}$, and $\bm{S}$ defined in \eqref{eq:defAPD}. Note that \eqref{eq:step-FPDHF} implies that $1-\gamma \tau \|L\|^2>0$.
%{\color{red}Note that $\bm M + \bm C = \bm S \circ \bm T$.}
For every $n\in\N$, set
\begin{align}\label{eq:defvarPD}
  &\gamma_n=\gamma,\quad \bm M_n=\bm M,\quad \x_n=(x_n,u_n),\quad \w_n=(p_n,q_n),  \quad \bm v_n=(y_n+Dp_n+L^*q_n,\;v_n-Lp_n),\nonumber\\
&\w_n^*=(p_n^*+Dp_n+L^*q_n,\;q_n^*-Lp_n),
\quad \bm{e}_n=(e_n^1,e_n^2), \quad \textnormal{and} \quad 
 \bm{t}_n^*=(t_n^1,t_n^2).
\end{align}
Since $(p_n,y_n)\in\gra A$ and $(q_n,v_n)\in\gra B^{-1}$, we have
\begin{equation*}
    \bm v_n\in \bm A\w_n.
\end{equation*}
Moreover, by construction,
\begin{equation*}
  \w_n^*=\bm M_n\x_n-\bm M_n\w_n-\bm C\w_n,
\qquad \textnormal{and} \qquad 
\bm e_n=\bm v_n-\w_n^*.  
\end{equation*}
Therefore, Algorithm~\ref{alg:FPDHF} can be written equivalently as
\begin{equation}
	(\forall n\in\N)\quad 
	\begin{array}{l}
		\left\lfloor
		\begin{array}{l}
			%w_n = (\bm{M}_n+\bm{A}+\bm{C})^{-1}(\bm{M}_nx_n) \Leftrightarrow w_n^* \in \bm{A} w_n \\
            \textnormal{find } (\w_n,\bm{v}_n) \in \gra \bm{A}  \textnormal{ such that }\\
			\left\lfloor
		\begin{array}{l} \w_n^* = \bm{M}_n\x_n-\bm{M}_n\w_n-\bm{C}\w_n\\
			\bm{e}_n = \bm{v}_n - w_n^*\\
    |\bm{e}_n\|_{\bm{S}^{-1}}\leq \sigma \|\w_n-\x_n\|_{\bm{S}}
            		\end{array}
		\right.\\
			\bm{t}_n^* = \bm{v}_n+\bm{C}\x_n\\
			\delta_n = \scal{\x_n-\w_n}{\bm{t}_n^*}-\frac{1}{4\widehat{\beta}}\|\w_n-\x_n\|^2_{{\bm{S}}} \\
			d_n = \begin{cases}
				\dfrac{\delta_n}{\|\bm{t}^*_n\|^2_{\bm{S}^{-1}}}  \bm{S}^{-1}\bm{t}^*_n, &\textnormal{ if }
				\delta_n >0; \\
				0,  &\textnormal{ otherwise } 
			\end{cases}\\
			\x_{n+1}=\x_n-\lambda_n d_n,
		\end{array}
		\right.
	\end{array} \notag
\end{equation}
which is a particular instance of Algorithm~\ref{alg:NFBE}. In addition, by Proposition~\ref{prop:operators}, $\bm{A}$ is maximally monotone, 
$\bm M_n$ is $\alpha$-strongly monotone and $\widehat{\zeta}$-Lipschitz, with $\alpha = \frac{1-\widehat{\zeta}}{\gamma}-\frac{1}{\widehat{\beta}}$,  and $\bm{C}$ is $\widehat{\beta}$-cocoercive. Furthermore, in view of \eqref{eq:step-FPDHF} we have $\alpha \in ~]1/(4\widehat\beta)+\sigma,+ \infty[$. Finally, since $\zer(\bm A+\bm C)$ coincides with the solution set of Problem~\ref{pro:mainPD}, the result follows by Theorem~\ref{teo:convergencia}.\ref{teo:convergencia2b}.
\end{proof}

\begin{rem}\label{rem:FPDHFunaresolvente}
\begin{enumerate}
    \item \label{rmk1} Fix $\kappa\in~]0,1[$ and let
    \begin{equation} \label{eq:gamma}
        \gamma \in \left]0,\frac{1 - \zeta \sqrt{\kappa}}{\frac{5}{\beta\kappa}+\sigma}\right[.
    \end{equation}
Now, setting $\tau = (1-\kappa)/(\gamma \|L\|^2)$, we have $\kappa = 1 - \gamma \tau \|L\|^2$ and
\begin{equation*}
    \gamma < \frac{1- \zeta \sqrt{\kappa}}{\frac{5}{\beta\kappa}+\sigma} \Leftrightarrow 1-\frac{5\gamma}{4\beta\kappa}-(\zeta \sqrt{\kappa} +\gamma \sigma)>0 \Leftrightarrow  1-\frac{5\gamma}{4\widehat{\beta}}-(\widehat{\zeta} +\gamma \sigma)>0.
\end{equation*}
        %Therefore, if \eqref{eq:gamma} holds
    %
    %For any  $\gamma \in \RPP$ we can find $\tau \in \RPP$ such that $\kappa = 1 - \gamma \tau \|L\|^2$, that is, $\tau = (1-\kappa)/(\gamma \|L\|^2)$. In that case,  $\widehat{\beta} = \beta \kappa$, $\widehat{\zeta}=\zeta \sqrt{\kappa}$. 
    Therefore, a simple condition guaranteeing that \eqref{eq:step-FPDHF} holds is \eqref{eq:gamma} and $\tau = (1-\kappa)/(\gamma \|L\|^2)$.
\item\label{rmk2} If we consider approximations only on the resolvent of $A$, it is possible to avoid the calculations of $\bm{S}^{-1}$ in Algorithm~\ref{alg:FPDHF}. Indeed, if we consider $v_n = (u_n-q_n)/\tau+L(p_n-\gamma(y_n+Cx_n+Dp_n+L^*u_n))$ and we set $(a_n,b_n) = (y_n+Dp_n+Cx_n + L^*u_n, (u_n-q_n)/\gamma)$, after simple calculations, we deduce
\begin{align*}
&q_n = J_{\tau B^{-1}} \left( u_n +\tau L(s_n+p_n-x_n-\gamma (y_n-p_n^*))\right),\\
&e_n^2 %&= v_n-q_n^*\\
        % &= \frac{u_n-q_n}{\tau}+L(p_n-\gamma(y_n+Cx_n+Dp_n+L^*u_n)) - L(p_n+s_n-x_n)+\frac{q_n-u_n}{\tau}\\
         % &= -\gamma L \left( y_n -\left(\frac{x_n-p_n}{\gamma} -Cx_n-Dx_n-L^*u_n\right)\right)   \\
          = -\gamma L \left( y_n -p_n^*\right),\\
 & (e_n^1,e_n^2)=\bm{S}(y_n-p_n^*,0)       \\ 
 &   (t_n^1,t_n^2) = \bm{S}(a_n, b_n).
\end{align*}
% we have
% \begin{align*}
%     v_n \in B^{-1} q_n &\Leftrightarrow \frac{u_n-q_n}{\tau}+L(p_n-\gamma(y_n+Cx_n+Dp_n+L^*u_n)) \in B^{-1} q_n\\
%     &\Leftrightarrow q_n = J_{\tau B^{-1}} \left( u_n +\tau L(p_n-\gamma(y_n+Cx_n+Dp_n+L^*u_n))\right)\\
%     &\Leftrightarrow  q_n = J_{\tau B^{-1}} \left( u_n +\tau L(s_n+p_n-x_n-\gamma (y_n-p_n^*))\right)
% \end{align*}
    % \begin{align*}
    % e_n^2 &= v_n-q_n^*\\
    %      &= \frac{u_n-q_n}{\tau}+L(p_n-\gamma(y_n+Cx_n+Dp_n+L^*u_n)) - L(p_n+s_n-x_n)+\frac{q_n-u_n}{\tau}\\
    %       &= -\gamma L \left( y_n -\left(\frac{x_n-p_n}{\gamma} -Cx_n-Dx_n-L^*u_n\right)\right)   \\
    %       &= -\gamma L \left( y_n -p_n^*\right).
    % \end{align*}
Therefore, Algorithm~\ref{alg:FPDHF} reduces to the following routine
    \begin{equation}\label{eq:algFPDHFEE}
	(\forall n\in\N)\quad 
	\begin{array}{l}
		\left\lfloor
		\begin{array}{l}
            \textnormal{ find } (p_n,y_n) \in \gra A \textnormal{ such that }\\
			\left\lfloor
		\begin{array}{l} p_n^{*} = x_n/\gamma - Cx_n-Dx_n-L^*u_n-p_n /\gamma \\
        s_n = p_n+\gamma (Dx_n-Dp_n) \\
        q_n = J_{\tau B^{-1}} \left( u_n +\tau L(s_n+p_n-x_n-\gamma (y_n-p_n^*))\right)\\
           \|y_n-p_n^*\|\leq \sigma \|(p_n-x_n,q_n-u_n)\|_{\bm{S}},	
		\end{array}
		\right.\\
%(t_n^1,t_n^2) = (y_n+Dp_n+L^*q_n+Cx_n , v_n-Lp_n)\\
%(t_n^1,t_n^2) = \bm{S}(y_n+Dp_n+Cx_n + L^*u_n, (u_n-q_n)/\gamma)\\
(a_n,b_n) = (y_n+Dp_n+Cx_n + L^*u_n, (u_n-q_n)/\gamma)\\
(t_n^1,t_n^2) = \bm{S}(a_n, b_n)\\
			\delta_n = \scal{x_n-p_n}{t_n^1}+\scal{u_n-q_n}{t_n^2}-\frac{1}{4 \widehat{\beta}}\|(x_n-p_n,u_n-q_n)\|^2_{{\bm{S}}} \\
			%d_n = \begin{cases}
			%	\dfrac{\delta_n}{\|(t_n^1,t_n^2)\|^2_{\bm{S}^{-1}}}  \bm{S}^{-1}(t_n^1,t_n^2), &\textnormal{ if }
			%	\delta_n >0; \\
			%	0,  &\textnormal{ otherwise } 
			%\end{cases}\\
            d_n = \begin{cases}
				\dfrac{\delta_n}{\|(a_n,b_n)\|^2_{\bm{S}}}  (a_n,b_n), &\textnormal{ if }
				\delta_n >0; \\
				0,  &\textnormal{ otherwise } 
			\end{cases}\\
			(x_{n+1},u_{n+1})=(x_n,u_n)-\lambda_n d_n.
            \end{array}
		\right.
	\end{array}
\end{equation}
\end{enumerate}
\end{rem}
Now, by applying Algorithm~\ref{alg:NFBEE} to the operators defined in \eqref{eq:defAPD}, we derive the following explicit scheme for solving Problem~\ref{pro:mainPD}.
\begin{algo}\label{alg:FPDHFE} In the context of Problem~\ref{pro:mainPD}, let $(x_0,u_0) \in \H\times\G$, $(\gamma,\tau) \in \RPP^2$, $\sigma \in [0,1[$, and consider the following recurrence.
    \begin{equation}\label{eq:algFPDHFE}
	(\forall n\in\N)\quad 
	\begin{array}{l}
		\left\lfloor
		\begin{array}{l}
            \textnormal{ find } (p_n,y_n) \in \gra A 
           \textnormal{ and } (q_n,v_n) \in \gra B^{-1}  \textnormal{ such that }\\
			\left\lfloor
		\begin{array}{l} p_n^{*} = x_n/\gamma - Cx_n-Dx_n-L^*u_n-p_n /\gamma \\
        s_n = p_n+\gamma (Dx_n-Dp_n) \\
        q_n^{*} =L(p_n+s_n-x_n)-q_n/\tau+u_n/\tau\\
        (e_n^1,e_n^2) = (y_n-p_n^{*},v_n-q_n^{*})\\
           \|(e_n^1,e_n^2)\|_{\bm{S}^{-1}}\leq \sigma \|(p_n-x_n,q_n-u_n)\|_{\bm{S}},	
		\end{array}
		\right.\\
			(x_{n+1},u_{n+1})=(s_n,q_n)- \gamma \bm{S}^{-1}(e_n^1,e_n^2).
		\end{array}
		\right.
	\end{array}
\end{equation}
\end{algo}
%
%Algorithm~\ref{alg:FPDHFE} can be viewed as a particular instance of Algorithm~\ref{alg:NFBE} applied to the primal--dual inclusion associated with Problem~\ref{pro:mainPD}. As a consequence, the weak convergence of the sequence generated by Algorithm~\ref{alg:FPDHFE} follows directly from the abstract convergence result established for our main scheme.
The next theorem establishes the weak convergence of Algorithm~\ref{alg:FPDHFE}.
\begin{teo}\label{thm:FPDHFE}
In the context of Problem~\ref{pro:mainPD},  let $\sigma\in[0,1)$, and let $\bigl((x_n,u_n)\bigr)_{n\in\N}$ be the sequence generated by Algorithm~\ref{alg:FPDHFE}. Suppose that
\begin{equation}\label{eq:stepsizesFPDHFE}
1-\varepsilon-\gamma^2\left(\frac{\zeta}{\sqrt{\,1-\tau\gamma\|L\|^2\,}}+\sigma\right)^2>0
\quad\text{and}\quad
2\beta\varepsilon(1-\tau\gamma\|L\|^2)\ge \gamma.
\end{equation}
Then $\bigl((x_n,u_n)\bigr)_{n\in\N}$ converges weakly to a solution of Problem~\ref{pro:mainPD}.
\end{teo}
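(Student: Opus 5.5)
The plan is to show that Algorithm~\ref{alg:FPDHFE} is a particular instance of Algorithm~\ref{alg:NFBEE} applied to the operators in \eqref{eq:defAPD}, and then to invoke Theorem~\ref{teo:NFBEE}. Set $\HH=\H\times\G$. For every $n\in\N$ we take $\gamma_n=\gamma$, $\bm M_n=\bm M$, and we define $\x_n$, $\w_n$, $\bm v_n$, $\w_n^*$, $\bm e_n$ exactly as in \eqref{eq:defvarPD}.

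First we check the structural hypotheses. The second inequality in \eqref{eq:stepsizesFPDHFE} forces $1-\tau\gamma\|L\|^2>0$, because $\gamma,\beta,\varepsilon>0$. Proposition~\ref{prop:operators} then gives the following:
\begin{itemize}
\item $\bm A$ is maximally monotone;
\item $\bm S\in\mathcal P(\HH)$;
\item $\bm C$ is $\widehat\beta$-cocoercive with respect to $\bm S$;
\item $\gamma(\bm M+\bm C)-\bm S=\bm R$ is $\widehat\zeta$-Lipschitz with respect to $\bm S$, where $\widehat\zeta=\gamma\zeta(1-\tau\gamma\|L\|^2)^{-1/2}$.
\end{itemize}
The first inequality in \eqref{eq:stepsizesFPDHFE} implies $\widehat\zeta<1$. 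Hence Assumption~\ref{asume:NFBEE} holds with $\zeta_n=\widehat\zeta$ and with $\widehat\beta$ in place of $\beta$. The conditions \eqref{eq:stepsizes} then read $1-\varepsilon-(\widehat\zeta+\gamma\sigma)^2\ge\delta$ and $2\widehat\beta\varepsilon\ge\gamma$. These are exactly \eqref{eq:stepsizesFPDHFE}: choose $\delta\in\left]0,1\right[$ equal to the positive left-hand side of the first inequality (or any smaller positive number), and note that the second condition is literally the same inequality.

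Next we check that the iterations coincide. As in the proof of Theorem~\ref{thm:FPDHF-conv}, $\bm v_n\in\bm A\w_n$, $\w_n^*=\bm M\x_n-\bm M\w_n-\bm C\w_n$, and $\bm e_n=\bm v_n-\w_n^*$. The tolerance test is the same in both algorithms. It remains to verify the update:
\[
\x_n-\gamma\bm S^{-1}(\bm v_n+\bm C\x_n)=(s_n,q_n)-\gamma\bm S^{-1}(e_n^1,e_n^2).
\]
Use $\bm v_n=\bm e_n+\w_n^*$ and Proposition~\ref{prop:operators}\ref{prop:operators3}. These give
\[
\bm v_n+\bm C\x_n=\bm e_n+(\bm M+\bm C)\x_n-(\bm M+\bm C)\w_n=\bm e_n+\bm S(\bm T\x_n-\bm T\w_n),
\]
so that
\[
\x_n-\gamma\bm S^{-1}(\bm v_n+\bm C\x_n)=\x_n-\gamma(\bm T\x_n-\bm T\w_n)-\gamma\bm S^{-1}\bm e_n.
\]
By the definition of $\bm T$,
\[
\x_n-\gamma(\bm T\x_n-\bm T\w_n)=\bigl(p_n+\gamma(Dx_n-Dp_n),\,q_n\bigr)=(s_n,q_n).
\]
This is precisely the update of Algorithm~\ref{alg:FPDHFE}.

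The main point requiring care is this identification step. One must confirm that the components of $\w_n^*$ built from $p_n^*$ and $q_n^*$ reproduce $\bm M\x_n-\bm M\w_n-\bm C\w_n$, in particular that the term $\gamma LD$ in the dual part of $\bm M$ matches the $L(p_n+s_n-x_n)$ term in $q_n^*$. This was already verified for Theorem~\ref{thm:FPDHF-conv}, so it can be cited. Once the identification is made, Theorem~\ref{teo:NFBEE} gives weak convergence of $(\x_n)_{n\in\N}$ to a point of $\zer(\bm A+\bm C)$. This set coincides with the solution set of Problem~\ref{pro:mainPD}, which concludes the proof.
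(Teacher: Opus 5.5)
Your proposal is correct and follows the paper's route. You identify Algorithm~\ref{alg:FPDHFE} as an instance of Algorithm~\ref{alg:NFBEE} through \eqref{eq:defAPD} and \eqref{eq:defvarPD}, use $\bm M+\bm C=\bm S\circ\bm T$ to rewrite the update as $\x_n-\gamma\bm S^{-1}(\bm v_n+\bm C\x_n)$, and conclude with Theorem~\ref{teo:NFBEE}; you also make explicit what the paper leaves implicit, namely $1-\tau\gamma\|L\|^2>0$, reading \eqref{eq:stepsizes} with $\zeta_n=\widehat\zeta$ and $\widehat\beta$ in place of $\beta$, and the choice of $\delta$.
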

\begin{proof} Consider the operators defined in \eqref{eq:defAPD} and the variables defined in \eqref{eq:defvarPD}. Similarly to the proof of Theorem~\ref{thm:FPDHF-conv} we have   $\w_n^*=\bm M_n\x_n-\bm M_n\w_n-\bm C\w_n$ and $ \bm e_n=\bm v_n-\w_n^*$. In addition, noting that, $\bm{M}+\bm{C} = \bm{S} \circ \bm{T}$ (Proposition~\ref{prop:operators}.\ref{prop:operators3}), we have $$\bm{T}\x_n-\bm{T}\w_n = \bm{S}^{-1}((\bm{M}_n+\bm{C})\x_n-(\bm{M}_n+\bm{C})\w_n).$$
Therefore,
\begin{align*}
(\forall n \in \N) \quad \x_{n+1}
&=(x_{n+1},u_{n+1})\\
&=(s_n,q_n)-\gamma \bm S^{-1}(e_n^1,e_n^2)\\
&=(p_n+\gamma (Dx_n-Dp_n,q_n)-\gamma \bm S^{-1}(e_n^1,e_n^2)\\
&=(x_n,u_n)-\gamma\left(\frac{x_n}{\gamma}-Dx_n,\frac{u_n}{\gamma}\right)+\gamma\left(\frac{p_n}{\gamma}-Dp_n,\frac{q_n}{\gamma}\right)-\gamma \bm S^{-1}(e_n^1,e_n^2)\\
&=\x_n-\gamma(\bm T\x_n-\bm T\w_n)-\gamma \bm S^{-1}\bm e_n\\
&=\x_n-\gamma \bm S^{-1}\bigl(\bm M_n\x_n+\bm C\x_n-\bm M_n\w_n-\bm C\w_n+\bm e_n\bigr)\\
&=\x_n-\gamma \bm S^{-1}\bigl(\w_n^*+\bm C\x_n+\bm e_n\bigr)\\
&=\x_n-\gamma \bm S^{-1}\bigl(\bm v_n+\bm C\x_n\bigr),
\end{align*}
thus, $(\x_n)_{n\in\N}$ is generated by Algorithm~\ref{alg:NFBEE}, 
\eqref{eq:stepsizesFPDHFE} corresponds to \eqref{eq:stepsizes},
and the conclusion follows by Theorem~\ref{teo:NFBEE}. 
% Finally, $\bm M_n+\bm{C}-\bm{S}$ is $\gamma\zeta(1-\tau\gamma\|L\|^2)^{-1/2}$-Lipschitz and $\bm C$ is $\beta(1-\tau\gamma\|L\|^2)$-cocoercive with respect to $\bm S$. Hence, the conclusion follows from Theorem~\ref{teo:NFBEE}, since $
% \zer(\bm A+\bm C)$
% coincides with the solution set of Problem~\ref{pro:mainPD}.
\end{proof}
\begin{rem}\label{rem:algFPDHFE}
\begin{enumerate}
    \item When $(e_n^1,e_n^2)=(0,0)$, Algorithm~\ref{alg:FPDHFE} reduces to the standard FPDHF.
     \item In the case where $C=0$, Algorithm~\ref{alg:FPDHFE} is an inexact version of the Condat--V\~u algorithm. If additionally $D=0$, it reduces to an inexact version of the Chambolle--Pock algorithm. 
%      In the latter case, if we additionally consider exact calculations of $J_{\tau B^{-1}}$, the algorithm reduces to
%     \begin{equation}
% 	(\forall n\in\N)\quad 
% 	\begin{array}{l}
% 		\left\lfloor
% 		\begin{array}{l}
%             \textnormal{ find } (p_n,y_n) \in \gra A  \textnormal{ such that }\\
% 			\left\lfloor
% 		\begin{array}{l} p_n^{*} = x_n/\gamma-L^*u_n-p_n /\gamma \\
%         e_n^1 = y_n-p_n^{*}\\
%            %\|(e_n^1,0)\|_{\bm{S}^{-1}}\leq \sigma \|(p_n-x_n,q_n-u_n)\|_{\bm{S}},
%            \|e_n^1\|\leq \sigma(1-\gamma\tau\|L\|^2) \|p_n-x_n\|
% 		\end{array}
% 		\right.\\
%         q_n = J_{\tau B^{-1}}(u_n+\tau L(2p_n+x_n))\\
% 			(x_{n+1},u_{n+1})=(p_n,q_n)+\gamma \bm{S}^{-1}(e_n^1,0).
% 		\end{array}
% 		\right.
% 	\end{array}
% \end{equation}
    \item % Note that the metrics induced by $\bm{S}$ and $\bm{S}^{-1}$ are used in Algorithms ~\ref{alg:FPDHF} and~\ref{alg:FPDHFE} for finding an adequate approximation of the resolvents. The calculation of these norms can be avoided in  numerical implementations, for example, if
    Note that Algorithms~\ref{alg:FPDHF} and~\ref{alg:FPDHFE} involve the norms induced by $\bm S$ and $\bm S^{-1}$ in the relative-error criterion used to compute approximate resolvent evaluations. In practice, the explicit computation of these norms can be avoided, for instance, if

        \begin{equation}\label{eq:errorPD2}
        \|e_n^1\|^2+\frac{\tau}{\gamma}\|e_n^2\|^2 \leq \sigma^2 (1-\sqrt{\gamma\tau}\|L\|)^2 \left(\|p_n-x_n\|^2+\frac{\gamma}{\tau}\|q_n-u_n\|^2\right).
    \end{equation}
    % by the rule defined in \eqref{eq:errorPD2} below. 
    Indeed, in view of Proposition~\ref{prop:operators}, we have for every  $(z,v) \in \HH$, 
    % \begin{equation}
    %     \bm{S}^{-1}  \colon \HH \to \HH \colon (z,v) \mapsto \big((\id-\gamma\T L^*L)^{-1}(z+\tau L^*v),\tau (\id-\gamma\T LL^*)^{-1}(Lz+v/\gamma)\big).
    % \end{equation}
    %Hence, for every  $(z,v) \in \HH$, 
    \begin{align}
    \|(z,v)\|_{\bm{S}^{-1}}^2%=\|((\id-\gamma\T L^*L)^{-1}(z+\tau L^*v)\|^2+\|\tau (\id-\gamma\T LL^*)^{-1}(Lz+v/\gamma)\|^2\\
     & = \scal{(1-\gamma \tau L^*L)^{-1}(z+\tau L^*v)}{z}+\tau\scal{(1-\gamma \tau LL^*)^{-1}(Lz+v/\gamma)}{v}\nonumber\\
     %& = \scal{(1-\gamma \tau L^*L)^{-1}z}{z}+2\tau\scal{(1-\gamma \tau LL^*)^{-1}Lz}{v}+\frac{\tau}{\gamma}\scal{(1-\gamma \tau LL^*)^{-1}v}{v}\\
     &\leq \|(1-\gamma \tau L^*L)^{-1}\|\|z+\tau L^* v\|\|z\|+\tau\|(1-\gamma \tau LL^*)^{-1}\|\|Lz+v/\gamma\|\|v\|\nonumber\\
    &\leq \frac{1}{1-\gamma \tau \|L\|^2}\left(\|z+\tau L^* v\|\|z\|+\tau\|Lz+v/\gamma\|\|v\|\right)\nonumber\\
       &\leq \frac{1}{1-\gamma \tau \|L\|^2}\left(\|z\|^2+2\tau\| L\|\| v\|\|z\| + \frac{\tau}{\gamma}\|v\|^2\right)\nonumber\\
     &\leq \frac{1}{1-\gamma \tau \|L\|^2}\left(\|z\|^2+\sqrt{\gamma\tau}\| L\|\|z\|^2 +\frac{\tau\sqrt{\tau}}{\sqrt{\gamma}}\| L\|\| v\|^2 + \frac{\tau}{\gamma}\|v\|^2\right)\nonumber\\
&\leq \frac{1+\sqrt{\gamma\tau}\|L\|}{1-\gamma \tau \|L\|^2}\left(\|z\|^2+ \frac{\tau}{\gamma}\|v\|^2\right)\nonumber\\
&= \frac{1}{1-\sqrt{\gamma \tau} \|L\|}\left(\|z\|^2+ \frac{\tau}{\gamma}\|v\|^2\right).\label{eq:boundSm1}
    \end{align}
  In addition, for every $(x,u) \in \HH$,
    \begin{align}
    \|(x,u)\|^2_{\bm{S}} &= \scal{(x-\gamma L^*u,\gamma u/\tau -\gamma Lx)}{(x,u)}\nonumber\\
    & = \|x\|^2-2\gamma\scal{Lx}{u}+\frac{\gamma}{\tau}\|u\|^2\nonumber\\
    & \geq \|x\|^2-\sqrt{\gamma \tau} \|L\|\|x\|^2 - \frac{\gamma\sqrt{\gamma}\|L\|}{\sqrt{\tau}}\|u\|^2 +\frac{\gamma}{\tau}\|u\|^2\nonumber\\
    &=(1-\sqrt{\gamma \tau} \|L\|)\left(\|x\|^2+\frac{\gamma}{\tau}\|u\|^2\right)\label{eq:boundS}.
    \end{align}
    Therefore, by \eqref{eq:boundSm1} and \eqref{eq:boundS}, the condition on the error in Algorithm~\ref{alg:FPDHFE} holds if \eqref{eq:errorPD2} holds.
    %\item  Algorithm~\ref{alg:FPDHF} and Algorithm~\ref{alg:FPDHFE} can be implemented using exact evaluations of $J_{\gamma A}$ or $J_{\tau B^{-1}}$. For instance, if $J_{\tau B^{-1}}$ is implemented exactly, then $e_n^2=0$, and the condition $\|(e_n^1,e_n^2)\|_{\bm{S}^{-1}}\leq \sigma \|(p_n-x_n,q_n-u_n)\|_{\bm{S}}$ holds if $\|e_n^1\|\leq \sigma(1-\gamma\tau\|L\|^2) \|p_n-x_n\|$. Indeed, it follows from \cite[Lemma~6.1]{MorinBanertGiselsson2022} that
    %\begin{align*}
     %   \|(e_n^1,0)\|_{\bm{S}^{-1}}^2 \leq \frac{1}{1-\gamma\tau \|L\|^2}\|e_n^1\|^2 \leq \sigma^2(1-\gamma\tau\|L\|^2) \|p_n-x_n\|^2\leq \sigma^2\|(p_n-x_n,q_n-u_n)\|^2_{\bm{S}}.
    %\end{align*}
    %Similarly, if $J_{\gamma A}$ is implemented, we have $e_n^1=0$ and if $\|e_n^2\|\leq \frac{\gamma(1-\gamma\tau\|L\|^2)}{\tau}\|q_n-u_n\|^2$, the condition on the error holds.
    \item\label{rem:algFPDHFE5}  Similarly to Remark~\ref{rem:FPDHFunaresolvente}.\ref{rmk2}, it is possible to completely avoid the operator $\bm{S}^{-1}$ in Algorithm~\ref{alg:FPDHFE} if we consider $v_n = (u_n-q_n)/\tau+L(p_n-\gamma(y_n+Cx_n+Dp_n+L^*u_n))$. In that case, 
%     we have
%     \begin{align*}
%     e_n^2 &= v_n-q_n^*\\
%          &= \frac{u_n-q_n}{\tau}+L(p_n-\gamma(y_n+Cx_n+Dp_n+L^*u_n)) - L(p_n+s_n-x_n)+\frac{q_n-u_n}{\tau}\\
%           &= -\gamma L \left( y_n -\left(\frac{x_n-p_n}{\gamma} -Cx_n-Dx_n-L^*u_n\right)\right)   \\
%           &= -\gamma L \left( y_n -p_n^*\right).
%     \end{align*}
% Therefore, $(e_n^1,e_n^2)=\bm{S}(y_n-p_n^*,0)$ and 
% \begin{align*}
%     \|(e_n^1,e_n^2)\|_{\bm{S}^{-1}} = \|(y_n-p_n^*,0)\|_{\bm{S}}=\|y_n-p_n^*\|. %=  \left \|y_n -\frac{x_n-p_n}{\gamma} -Cx_n-Dx_n-L^*u_n\right\|.
% \end{align*}
% Moreover, 
% \begin{align*}
%     v_n \in B^{-1} q_n &\Leftrightarrow \frac{u_n-q_n}{\tau}+L(p_n-\gamma(y_n+Cx_n+Dp_n+L^*u_n)) \in B^{-1} q_n\\
%     &\Leftrightarrow q_n = J_{\tau B^{-1}} \left( u_n +\tau L(p_n-\gamma(y_n+Cx_n+Dp_n+L^*u_n))\right)\\
%     &\Leftrightarrow  q_n = J_{\tau B^{-1}} \left( u_n +\tau L(s_n+p_n-x_n-\gamma (y_n-p_n^*))\right)
% \end{align*}
% Hence, 
Algorithm~\ref{alg:FPDHFE} can be written as follows
    \begin{equation}\label{eq:algFPDHFEexplicit}
	(\forall n\in\N)\quad 
	\begin{array}{l}
		\left\lfloor
		\begin{array}{l}
            \textnormal{ find } (p_n,y_n) \in \gra A \\
			\left\lfloor
		\begin{array}{l} p_n^{*} = x_n/\gamma - Cx_n-Dx_n-L^*u_n-p_n /\gamma \\
        s_n = p_n+\gamma (Dx_n-Dp_n) \\
q_n = J_{\tau B^{-1}} \left( u_n +\tau L(s_n+p_n-x_n-\gamma (y_n-p_n^*))\right)\\
           %\left \|\gamma y_n -x_n-p_n -\gamma (Cx_n+Dx_n+L^*u_n)\right\| \leq \sigma\gamma  \|(p_n-x_n,q_n-u_n)\|_{\bm{S}},	
\| y_n -p_n^* \|\leq \sigma  \|(p_n-x_n,q_n-u_n)\|_{\bm{S}},	
		\end{array}
		\right.\\
			(x_{n+1},u_{n+1})=(s_n+\gamma(y_n-p_n^*),\;q_n).
		\end{array}
		\right.
	\end{array}
\end{equation}
In view of \cite[Lemma~6.1]{MorinBanertGiselsson2022}, $\| y_n -p_n^* \|\leq \sigma  \|(p_n-x_n,q_n-u_n)\|_{\bm{S}}$ holds when $\|y_n-p_n^*\|\leq \sigma \sqrt{(1-\gamma \tau \|L\|^2)}\|p_n-x_n\|$. Hence, no explicit computation of the $\bm S$-norm is required in practical implementations. Finally, when $C=D=0$, Algorithm~\eqref{eq:algFPDHFEexplicit} reduces to the inexact Chambolle--Pock method proposed in \cite[Algorithm~3]{AlvesLorenzNaldi2026}.
\end{enumerate}
\end{rem}
\section{Applications and numerical experiments}\label{sec:applications}
In this section, we provide practical applications and numerical experiments to show the advantages of incorporating approximations in the computation of the resolvent. We present two classes of problems: saddle-point and convex optimization problems. While both formulations can be mathematically linked via Fenchel-Rockafellar duality, they represent fundamentally different modeling paradigms that necessitate separate treatment. The saddle-point formulation is native to adversarial and equilibrium-seeking settings, such as zero-sum games, robust optimization, and generative adversarial networks. In these contexts, the dual variable is not an auxiliary construct, but a primary decision entity (e.g., a competing player or an adversary) endowed with its own structural constraints. All numerical experiments were implemented in MATLAB on a desktop computer equipped with an Intel Core i7-14700K processor (3.4/5.6~GHz), 64~GB of RAM, and running Windows~11 Pro 64-bit. The code is available in this  \href{https://drive.google.com/file/d/1oANDJOOHLyJTHgXGNHk3eIyoDUbd2RHf/view?usp=drive_link}{repository}.

 \subsection{Saddle-Point Problems}
In this subsection, we consider the numerical solution of the following saddle-point problem.
\begin{pro}\label{prob:saddle}
Let $f \in \Gamma_0(\H)$, $g \in \Gamma_0(\G)$, $L \colon \H\to \G$ be a bounded linear operator. The problem is to
		\begin{equation}\label{eq:problemsaddle}
			\min_{x\in \H}\max_{y \in \G} f(x)+ \scal{Lx}{y}-g(y),
		\end{equation}
        under the assumption that its solution set is nonempty.
\end{pro}
This problem encompasses several applications such as zero-sum games \cite{vonNeumannMorgenstern1944}, robust optimization \cite{RobustOpti2009}, generalized lasso problems \cite{Tibshirani1996,Tibshirani2011}, and generative adversarial networks \cite{gidel2019variational,Mescheder2017}, among others. Problem~\ref{prob:saddle} is equivalent to 
		\begin{equation}
			\text{find} \quad (x,y) \in \H\times \G \quad \text{ such that } \quad (0,0) \in A(x,y) + D(x,y),
		\end{equation}
where 
\begin{align*}
    A \colon  \H \times \G \to 2^{\H \times \G} \colon (x,y)\mapsto \partial f(x) \times \partial g(y)\\
    D \colon  \H \times \G \to {\H \times \G} \colon (x,y)\mapsto (L^*y,-Lx).
\end{align*}
We have that $A$ is maximally monotone and $D$ is monotone and $\|L\|$-Lipschitz, thus, Problem~\ref{prob:saddle} can be solved by the FBF algorithm and its inexact resolvent versions, namely, Algorithm~\ref{alg:FBHFEP} (IFBF {\it inexact FBF}) and Algorithm~\ref{alg:FBHFE} (EIFBF {\it explicit inexact FBF}) for $C=0$. Particularly, Algorithm~\ref{alg:FBHFEP} reduces to  \begin{equation}\label{eq:algFBHFEP_saddle}
	(\forall n\in\N)\quad 
	\begin{array}{l}
		\left\lfloor
		\begin{array}{l}
            \textnormal{ find } (z_n^1,y_n^1) \in \gra \partial f \textnormal{ and }  (z_n^2,y_n^2) \in \gra \partial g
            \textnormal{ such that }\\
			\left\lfloor
		\begin{array}{l} (z_n^{1,*},z_n^{2,*}) = (x_n^1/\gamma -L^*x^2_n-z_n^1 /\gamma ,x_n^2/\gamma +Lx^1_n-z_n^2 /\gamma)\\
       (e_n^1, e_n^2) = (y_n^1  - z_n^{1,*} ,y_n^2  - z_n^{2,*}) \\
           \|e_n^1\|^2+\|e_n^2\|^2 \leq \sigma^2 (\|z_n^1-x_n^1\|^2+\|z_n^2-x_n^2\|^2),	
		\end{array}
		\right.\\
        (t_n^{1,*},t_n^{2,*}) = (y_n^1+L^*z_n^2,y_n^2-Lz_n^1)\\
			\delta_n = \scal{x_n^1-z_n^1}{t_n^{1,*}}+ \scal{x_n^2-z_n^2}{t_n^{2,*}}\\
			(d_n^1,d_n^2) = \begin{cases}
				\dfrac{\delta_n}{\|(t_n^{1,*},t_n^{2,*})\|^2} (t_n^{1,*},t_n^{2,*}), &\textnormal{ if }
				\delta_n >0; \\
				0,  &\textnormal{ otherwise } 
			\end{cases}\\
			(x_{n+1}^1,x_{n+1}^2)=(x_n^1,x_n^2)-\lambda_n(d_n^1,d_n^2).
		\end{array}
		\right.
	\end{array}
\end{equation}
Similarly, it is possible to derive the explicit version of Algorithm~\ref{alg:FBHFE}, which is omitted here for the sake of conciseness. To exhibit the advantages of allowing approximations in the computation of the resolvent, we consider the following numerical example. 
\subsubsection{Numerical implementation} To numerically compare Algorithm~\ref{alg:FBHFEP} and Algorithm~\ref{alg:FBHFE} with the standard FBF, we consider Problem~\ref{prob:saddle} when $\H =\R^N$, $\G =\R^M$, $f (x) = \frac{1}{2}x^\top Q x + q^\top x$, $g(y) = \iota_{[-1,1]^M}(y)$, $L \in \R^{M\times N}$, $Q \in \mathbb{R}^{N \times N}$ is a symmetric positive definite matrix, and $q \in \R^N$.
Note that, at each iteration, FBF needs to calculate the resolvent of $A$ which is given by
\begin{equation*}
    (\forall (x,y) \in \H \times \G) \quad J_{\gamma A}(x,y) = (\prox_{\gamma f} x, \prox_{\gamma g} y ) = ( (\id+\gamma Q)^{-1} (x-\gamma q), P_{[-1,1]^M}y ). 
\end{equation*}
Since $ P_{[-1,1]^M}y = \max(-1, \min(1, y))$, $\prox_{\gamma g}$ can be easily implemented. On the other hand, $\prox_{\gamma f}$ is numerically expensive in high dimensions, to avoid this costly implementation, we  consider approximations on the resolvent  by the conjugate gradient method \cite{hestenes1952methods} for solving the system $z_n^1 = (\id+\gamma Q)^{-1} (x_n^1-\gamma (L^*x_n^2+q))$. Then, in \eqref{eq:algFBHFEP_saddle}, we set, for every $n \in \N$, $e_n ^2=  0$; thus, $z_n^2 = \prox_{\gamma g} (x_n^2+\gamma L x_n^1)$. Defining  $\mathcal{M} = \id + \gamma Q$, we approximate $(z_n^1,y_n^1)$ using the following subroutine: initialize $p^0 = x_n^1$, $r^0= x_n^1 -\gamma (L^* x_n^2+q) - \mathcal{M} p^0$, $d^0 = r^0$,  and
\begin{equation}\label{eq:alg_saddle_Subroutine}
	\begin{array}{l}
		(\forall k\in\N)\quad
		\left\lfloor
		\begin{array}{l} 
			\textnormal{if } \quad \|r^{k}\| \leq \sigma\|p^k-x_n^1\|  \quad\textnormal{ return} \\[2mm]
			\quad \left\lfloor
			\begin{array}{l}
				z_n^1 = p^{k} \quad \textnormal{and} \quad\gamma(y_n^1 - z_n^{1,*}) = -r^{k} 
			\end{array}
			\right.\\[4mm]
            \textnormal{otherwise}\\[2mm]
			\left\lfloor
		\begin{array}{l} \alpha_k = \dfrac{\|r^{k}\|^2}{(d^{k})^\top {\mathcal{M} d^{k}}} \\
			p^{k+1} = p^{k} + \alpha_k d^{k} \\
			r^{k+1} = r^{k} - \alpha_k \mathcal{M} d^{k}\\
			\eta_k = \dfrac{\|r^{k+1}\|^2}{\|r^{k}\|^2} \\
			d^{k+1} = r^{k+1} + \eta_k d^{k}.
            \end{array}
			\right.
		\end{array}
		\right.
	\end{array}
\end{equation}
To test the algorithms, we consider nine pairs of dimensions $(N,M)$ described in Table~\ref{T:results_saddle}. For each value of $(N,M)$, we generate 20 random instances of $Q$, $q$, and $L$ by using the {\it randn} function from MATLAB. We ran FBF with step-size $\gamma = 0.99/\|L\|$ and IFBF and EIFBF with $\gamma = 0.99/(\|L\|+\sigma)$ with $\sigma \in \{0.1,0.5,0.9\}$. The algorithms stop when a limit of $10^5$ iterations is reached or when the relative error is less than $10^{-6}$. For each pair $(N,M)$, the results in terms of the average number of iterations and average CPU time over the 20 realizations, are presented in Table~\ref{T:results_saddle}. From this table we can observe the numerical advantages of incorporating approximation in the resolvent. In every case, FBF is outperformed by the inexact versions and the best performance is by IFBF with $\sigma = 0.9$, which reduces the CPU time by more than 55\% compared to FBF. Note that, as $\sigma$ is larger, the number of subiterations decreases, as expected. 
\begin{table}[h!]
    \centering
   \begin{tabular}{l c ccc ccc ccc}
        \toprule
        % ---------------------------------------------------
        % BLOQUE N = 500
        % ---------------------------------------------------
        %\multicolumn{11}{c}{$\bm{N = 500}$} \\
        %\midrule
        \multicolumn{2}{c}{$\bm{N = 500}$}  & \multicolumn{3}{c}{$\bm{M = 150}$} 
        & \multicolumn{3}{c}{$\bm{M = 250}$} 
        & \multicolumn{3}{c}{$\bm{M = 400}$} \\
        \cmidrule(lr){3-5} \cmidrule(lr){6-8} \cmidrule(lr){9-11}
        \textbf{Algorithm} & $\bm{\sigma}$ & \textbf{NI} & \textbf{T} & \textbf{SI} 
                                           & \textbf{NI} & \textbf{T} & \textbf{SI} 
                                           & \textbf{NI} & \textbf{T} & \textbf{SI} \\
        \midrule
        FBF   & --  & 580 & 0.88 & --    & 1780 & 2.85 & --    & 13382 & 34.79 & --    \\
        \addlinespace
        \multirow{3}{*}{IFBF} 
              & 0.1 & 601 & 0.21 & 24  & 1875 & 0.76 & 23  & 14970& 18.14 & 22 \\
              & 0.5 & 599  & 0.20 & 17  & 1870  & 0.72 & 16  & 14930 & 17.61 & 15 \\
              & 0.9 & 597  & {\bf 0.18} & 15  & 1865 & {\bf 0.68} & 14  & 14885 & {\bf 17.41} & 13  \\
        \addlinespace
        \multirow{3}{*}{EIFBF} 
              & 0.1 & 581  & 0.23  & 23 & 1784  & 0.82 & 20  & 13407  & 19.30 & 15\\
              & 0.5 & 587  & 0.22  & 18  & 1800  & 0.79  & 14  & 13507 & 19.11 & 10  \\
              & 0.9 & 593 & 0.20  & 16  & 1816  & 0.76  & 12  & 13607  & 19.10& 8 \\
        
        \midrule
        % ---------------------------------------------------
        % BLOQUE N = 1000
        % ---------------------------------------------------
        %\multicolumn{11}{c}{$\bm{N = 1000}$} \\
        %\midrule
        \multicolumn{2}{c}{$\bm{N = 1000}$} & \multicolumn{3}{c}{$\bm{M = 300}$} 
        & \multicolumn{3}{c}{$\bm{M = 500}$} 
        & \multicolumn{3}{c}{$\bm{M = 800}$} \\
        \cmidrule(lr){3-5} \cmidrule(lr){6-8} \cmidrule(lr){9-11}
        \textbf{Algorithm} & $\bm{\sigma}$ & \textbf{NI} & \textbf{T} & \textbf{SI} 
                                           & \textbf{NI} & \textbf{T} & \textbf{SI} 
                                           & \textbf{NI} & \textbf{T} & \textbf{SI} \\
        \midrule
        FBF   & --  & 812 & 5.66 & --    & 2466 & 19.83 & --    & 11296 & 101.33 & --    \\
        \addlinespace
        \multirow{3}{*}{IFBF} 
              & 0.1 & 858 & 1.73 & 30 & 2674 & 6.60 & 28  & 12918 & 38.15 & 27  \\
              & 0.5 & 856  & 1.59 & 22  & 2670 & 6.20 & 20 & 12896 & 36.45& 20  \\
              & 0.9 & 855 & {\bf 1.54} & 19  & 2665 & {\bf 6.09} & 18 & 12869 & {\bf 35.74} & 17 \\
        \addlinespace
        \multirow{3}{*}{EIFBF} 
              & 0.1 & 813  & 2.09 & 28 & 2469  & 7.94 & 24 & 11311 & 43.16 & 20  \\
              & 0.5 & 819  & 1.96 & 22  & 2485  & 7.63 & 18  & 11372  & 42.04 & 13\\
              & 0.9 & 825 & 1.94 & 19  & 2501  & 7.56 & 15  & 11433 & 41.79 & 11 \\
        
        \midrule
        % ---------------------------------------------------
        % BLOQUE N = 2000
        % ---------------------------------------------------
        %\multicolumn{11}{c}{$\bm{N = 2000}$} \\
        %\midrule
        \multicolumn{2}{c}{$\bm{N = 2000}$} & \multicolumn{3}{c}{$\bm{M = 600}$} 
        & \multicolumn{3}{c}{$\bm{M = 1000}$} 
        & \multicolumn{3}{c}{$\bm{M = 1600}$} \\
        \cmidrule(lr){3-5} \cmidrule(lr){6-8} \cmidrule(lr){9-11}
        \textbf{Algorithm} & $\bm{\sigma}$ & \textbf{NI} & \textbf{T} & \textbf{SI} 
                                           & \textbf{NI} & \textbf{T} & \textbf{SI} 
                                           & \textbf{NI} & \textbf{T} & \textbf{SI} \\
        \midrule
        FBF   & --  & 3249 & 98 & --   & 5619 & 179.74 & --   & 22513 & 784.62 & --   \\
        \addlinespace
        \multirow{3}{*}{IFBF} 
              & 0.1 & 3618 & 39.77  & 35 & 6390 & 76.62 & 35 & 27343 & 394.90 & 34 \\
              & 0.5 & 3614 & 40.16  & 26 & 6383 & 73.69  & 25 & 27307 & 358.92& 25 \\
              & 0.9 & 3610 & {\bf 38.55}  & 23 & 6375 & {\bf 71.41}  & 22 & 27270 & {\bf 351.58} & 22 \\
        \addlinespace
        \multirow{3}{*}{EIFBF} 
              & 0.1 & 3253& 42.51  & 31 & 5624 & 81.48  & 29 & 22534 & 389.57  & 24.57 \\
              & 0.5 & 3615 & 42.53  & 22 & 5647 & 79.30  & 21 & 22613 & 369.00  & 17\\
              & 0.9 & 3610  & 41.52  & 19& 5670 & 77.69  & 18 &  22693 & 365.04  & 14 \\
        \bottomrule
    \end{tabular}
    \caption{Results in terms of, number of iterations (NI), CPU time in seconds (T), and mean value of subiterations (SI). For each dimension, the best CPU time is highlighted in {\bf black}.}
    \label{T:results_saddle}
\end{table}
 \subsection{Convex Optimization Problems}
We focus on the following convex optimization problem.
\begin{pro}\label{prob:problemopti}
		%In the context of Problem~\ref{pro:main}, 
		Let $L\colon \H \to \G$ be a
		bounded linear operator, let $f \in \Gamma_0(\H)$, $h \in \Gamma_0(\H)$, and $g\in \Gamma_0(\G)$. Suppose that $h$ is differentiable with a $(1/\beta)$-Lipschitz continuous
		gradient. The problem is to 
		\begin{equation}\label{eq:problemopti}
			\min_{x\in \H}f(x)+g(Lx)+h(x)
		\end{equation}
		under the assumption that its solution set is nonempty.
	\end{pro}
This optimization problem encompasses applications in data science \cite{CombettesPesquet2021strategies}, machine learning \cite{Nocedal2018}, image processing
\cite{BotHendrich2014TV,Briceno2011ImRe,BurgerSawatzkySteidl2014,chambolle2016AN}, among others.
If $0 \in \sri(\dom g - L(\dom f))$, by considering $A=\partial f$, $B= \partial g$, $C= \nabla h$, and $D=0$, Problem~\ref{prob:problemopti} is a particular instance of Problem~\ref{pro:mainPD} \cite[Theorem~27.2]{bauschkebook2017}. Therefore, this problem can be solved by the Condat--V\~u algorithm which needs to evaluate $\prox_{\tau f}$ and $\prox_{\sigma g^*}$, where $(\tau,\sigma) \in \RPP$. Similarly to previous subsection, in the case that $\prox_{\tau f}$ has a high computational cost, it is desirable to consider an approximation of it to decrease the computational time. 
In this scenario, considering the operators $A$, $B$, $C$, and $D$ defined above, Algorithm~\ref{alg:FPDHF} with exact resolvent on $B$ (see \eqref{eq:algFPDHFEE}) reduces to the following sequence:
\begin{equation}\label{eq:algCVerror}
	(\forall n\in\N)\quad 
	\begin{array}{l}
		\left\lfloor
		\begin{array}{l}
            \textnormal{ find } (p_n,y_n) \in \gra \partial f \textnormal{ such that }\\
			\left\lfloor
		\begin{array}{l} p_n^{*} = x_n/\gamma - \nabla h (x_n)-L^*u_n-p_n /\gamma \\
        q_n = \prox_{\tau g^*} \left( u_n +\tau L(2p_n-x_n-\gamma (y_n-p_n^*))\right)\\
           \|y_n-p_n^*\|\leq \sigma \|(p_n-x_n,q_n-u_n)\|_{\bm{S}},	
		\end{array}
		\right.\\
(a_n,b_n) = (y_n+Cx_n + L^*u_n, (u_n-q_n)/\gamma)\\
(t_n^1,t_n^2) = \bm{S}(a_n, b_n)\\
			\delta_n = \scal{x_n-p_n}{t_n^1}+\scal{u_n-q_n}{t_n^2}-\frac{1}{4 \widehat{\beta}}\|(x_n-p_n,u_n-q_n)\|^2_{{\bm{S}}} \\
            d_n = \begin{cases}
				\dfrac{\delta_n}{\|(a_n,b_n)\|^2_{\bm{S}}}  (a_n,b_n), &\textnormal{ if }
				\delta_n >0; \\
				0,  &\textnormal{ otherwise } 
			\end{cases}\\
			(x_{n+1},u_{n+1})=(x_n,u_n)-\lambda_n d_n,
            \end{array}
		\right.
	\end{array}
\end{equation}
    where $\bm{S}$ is defined as in \eqref{eq:defAPD}. Similarly, Algorithm~\ref{alg:FPDHFE} with exact resolvent on $B$ (see \eqref{eq:algFPDHFEexplicit}) reduces to
    \begin{equation}\label{eq:algCVEexplicit}
	(\forall n\in\N)\quad 
	\begin{array}{l}
		\left\lfloor
		\begin{array}{l}
            \textnormal{ find } (p_n,y_n) \in \gra \partial f 
           \\
			\left\lfloor
		\begin{array}{l} p_n^{*} = x_n/\gamma - \nabla h (x_n)-L^*u_n-p_n /\gamma \\
q_n = \prox_{\tau g^*} \left( u_n +\tau L(2p_n-x_n-\gamma (y_n-p_n^*))\right)\\
           %\left \|\gamma y_n -x_n-p_n -\gamma (Cx_n+Dx_n+L^*u_n)\right\| \leq \sigma\gamma  \|(p_n-x_n,q_n-u_n)\|_{\bm{S}},	
\| y_n -p_n^* \|\leq \sigma  \|(p_n-x_n,q_n-u_n)\|_{\bm{S}},	
		\end{array}
		\right.\\
			(x_{n+1},u_{n+1})=(s_n+\gamma(y_n-p_n^*),\;q_n).
		\end{array}
		\right.
	\end{array}
\end{equation}
Next, we compare these two algorithms with the standard Condat--V\~u algorithm in the context of Computed Tomography Reconstruction problems.
\subsubsection{Computed Tomography Reconstruction}
A particular instance of Problem~\ref{prob:problemopti} is the image reconstruction problem arising, for example, in Computed Tomography (CT) \cite{Kak2001}. In particular, let $\H = \R^N$, $\G= \R^M$, and consider $c \in \R^M$ a noisy tomographic projection of an image $\overline{x} \in \R^N$. The objective is to reconstruct $\overline{x}$ from the observation $c$. We assume that
\begin{equation*}
    c = T(\overline{x})+ \epsilon
\end{equation*}
where $T \in \R^{M\times N}$ is the discretized Radon projector and $\epsilon$ represents Gaussian noise. An approach to recover $\overline{x}$ is to solve the following optimization problem: 
\begin{equation}\label{eq:problemoCT}
			\min_{x\in \R^N} F(x):=\frac{1}{2}\|Tx-c\|_2^2 + \lambda_1 H_{\delta}(W x)+\lambda_2\|\nabla x\|_1
\end{equation}
%Problem~\ref{prob:problemopti} with $f = \lambda_1 H_{\delta}\circ W$, $g = \lambda_2\|\nabla \cdot\|_1$, and $h = \frac{1}{2}\|T(\cdot)-c\|_2^2$, 
where, for a given $\delta >0$, $H_{\delta}$ is the Huber function defined by 
\begin{equation}\label{eq:def_huber}
(\forall x = (x_i)_{1\leq i \leq N} \in \R^N) \quad H_{\delta}(x) =  \sum_{i=1}^N \phi_{\delta}(x_i) \  \textnormal{ and } \ 
    (\forall \eta \in \R) \quad
    \phi_{\delta} (\eta) = \begin{cases}
    |\eta|-\frac{\delta}{2}, & \text{ if } |\eta| > \delta,\\
    \frac{\eta^2}{2\delta}, & \text{ otherwise},
\end{cases}
\end{equation}
$W\in \R^{N \times N}$ is an orthonormal wavelet transform, $\nabla$ is the discrete gradient with Neumann boundary conditions, and $(\lambda_1,\lambda_2) \in \RPP^2$ are regularization parameters. The functions $\lambda_1 H_{\delta}\circ W$ and $\lambda_2\|\nabla \cdot\|_1$ promote sparsity of the image to be recovered while  $\frac{1}{2}\|T(\cdot)-c\|_2^2$ acts as the data fidelity term. We have that $H_\delta$ is differentiable and its gradient is $(1/\delta)$-Lipschitz continuous. Explicit formulas for $\nabla H_\delta$ and $\prox_{H_\delta}$ can be found in \cite{BricenoPustelnik2023} and, by the orthogonality of $W$, $\prox_{H _\delta \circ W }$ is also explicit in view of \cite[Corollary~23.27]{bauschkebook2017}. For additional details on the model, the reader is referred to \cite{TV-chambolle,chambolle1997,Klann_2015,Loris2007,PustelnikWavelet,ROF1992}. 

Since $\prox_{H_\delta \circ W}$ admits a closed-form expression, the problem can be solved by Condat--V\~u, i.e., by the algorithm in \eqref{eq:algCVEexplicit} with $f = \lambda_1 H_{\delta}\circ W$, $g = \lambda_2\|\nabla (\cdot)\|_1$, $h = \frac{1}{2}\|T(\cdot)-c\|_2^2$ and $\sigma =0$. However, in this setting, $C$ is $\|T\|^{-2}$-cocoercive. Since $\|T\|$ is generally large for CT problems, the step-sizes are forced to be small, which usually slows down the convergence. To allow larger step-sizes in Condat--V\~u, it can be also applied with $f = \frac{1}{2}\|T(\cdot)-c\|_2^2$, $g = \lambda_2\|\nabla (\cdot)\|_1$, and $h = \lambda_1 H_{\delta}\circ W$, but calculating $\prox_{\gamma f} (x)= (\id +\gamma T^*T)^{-1}(x-\gamma T^*c)$ requires solving a large linear system. Once again, to avoid solving this linear system, we consider an approximate solution by the conjugate gradient method. Setting $\partial f = T^*T-T^*c$ in \eqref{eq:algCVerror} or in \eqref{eq:algCVEexplicit}, we then have $y_n = T^*T p_n-T^*c$ and 
\begin{equation*}
    \gamma(y_n -p_n^*) =  (\id + \gamma T^*T) p_n -x_n+\gamma (\nabla h (x_n)+L^*u_n)-\gamma T^*c.
\end{equation*}
Moreover, define $\mathcal{M} = \id + \gamma T^*T $, $b_n = x_n - \gamma(\nabla h(x_n) + L^*u_n)-\gamma T^*c$, $p^0 = x_n$, $r^0= b_n - \mathcal{M} p^0$, and $d^0 = r^0$. Then, $(p_n,y_n)$ are chosen according to the following subroutine.
\begin{equation}\label{eq:algCG_Subroutine}
	\begin{array}{l}
		(\forall k\in\N)\quad
		\left\lfloor
		\begin{array}{l} 
			\textnormal{if } \quad \|r^{k}\| \leq \sigma \sqrt{(1-\gamma \tau \|L\|^2)}\|p^k-x_n\|  \quad\textnormal{ return} \\[2mm]
			\quad \left\lfloor
			\begin{array}{l}
				p_n = p^{k} \quad \textnormal{and} \quad\gamma(y_n - p_n^*) = -r^{k} 
			\end{array}
			\right.\\[4mm]
            \textnormal{otherwise}\\[2mm]
			\left\lfloor
		\begin{array}{l} \alpha_k = \dfrac{\|r^{k}\|^2}{(d^{k})^\top {\mathcal{M} d^{k}}} \\
			p^{k+1} = p^{k} + \alpha_k d^{k} \\
			r^{k+1} = r^{k} - \alpha_k \mathcal{M} d^{k}=\gamma p_n^*-\gamma T^*Tp^k \\
			\eta_k = \dfrac{\|r^{k+1}\|^2}{\|r^{k}\|^2} \\
			d^{k+1} = r^{k+1} + \eta_k d^{k}
            \end{array}
			\right.
		\end{array}
		\right.
	\end{array}
\end{equation}
where the error criterion is satisfied in view of Remark~\ref{rem:algFPDHFE}.\ref{rem:algFPDHFE5}. Now, we will numerically test Condat-V\~u in both scenarios with the algorithms in \eqref{eq:algCVerror} and in \eqref{eq:algCVEexplicit} calculating $p_n$ according to subroutine \eqref{eq:algCG_Subroutine}. The settings of the algorithms are summarized in Table~\ref{T:algorithms}. Algorithms $CV1$ and $CV2$ correspond to the standard Condat-V\~u algorithm, ICV stands for {\it inexact  Condat-V\~u} and EICV for {\it explicit inexact Condat-V\~u}.  
	\begin{table}[h!]
		\centering
		\setlength{\tabcolsep}{2.5pt}
		\begin{tabular}{c|c|c|c|c|c|c|c|c|c|c}
			{\bf Algorithm } & Eq. & $f$ & $g$ & $h$ &  $\gamma$ & $\tau$ & $\beta $ & $\lambda $\\ \hline
 CV1 & \eqref{eq:algCVEexplicit} & $\lambda_1 H_{\delta}\circ W$ & $\lambda_2\|\cdot\|_1$  & $  \frac{1}{2}\|T(\cdot)-c\|^2$ & $ 0.99\cdot2\kappa \beta $ & $ \frac{1}{\gamma \|\nabla\|^2}(1-\frac{\gamma}{2\beta}) $ & $ \frac{1}{\|K\|^2} $  & - \\
 CV2 & \eqref{eq:algCVEexplicit} & $  \frac{1}{2}\|T(\cdot)-c\|^2$  & $\lambda_2\|\cdot\|_1$  & $\lambda_1 H_{\delta}\circ W$  & $0.99\cdot2\kappa \beta $  & $  \frac{1}{\gamma \|\nabla\|^2}(1-\frac{\gamma}{2\beta}) $ & $ \frac{\delta}{\lambda_1}  $ & -\\
  ICV & \eqref{eq:algCVerror} & $  \frac{1}{2}\|T(\cdot)-c\|^2$  & $\lambda_2\|\cdot\|_1$  & $\lambda_1 H_{\delta}\circ W$  & $\frac{0.99\cdot4\kappa \beta}{5+4\beta\sigma}$  & $  \frac{4\beta(1-\sigma\gamma)-5\gamma}{4\beta\gamma \|\nabla\|^2(1-\sigma\gamma)} $ & $ \frac{\delta}{\lambda_1}  $ & $1.99$\\
  EICV& \eqref{eq:algCVEexplicit} & $  \frac{1}{2}\|T(\cdot)-c\|^2$  & $\lambda_2\|\cdot\|_1$  & $\lambda_1 H_{\delta}\circ W$  & $0.99\cdot2\kappa \beta \cdot \varepsilon$  & $  \frac{2\kappa \beta\varepsilon-\gamma}{2\beta\varepsilon\gamma \|\nabla\|^2} $ & $ \frac{\delta}{\lambda_1}  $ & -
		\end{tabular}
		\caption{Settings of the algorithms to be compared. We define $\varepsilon = 2(1+\sqrt{1+16\sigma^2\beta^2}) $. We test these algorithms for different values of the parameters $\kappa \in \R$ and $\sigma \in \RPP$ which will be specified below. The stepsizes $\gamma$ and $\tau$ were chosen as large as possible satisfying the conditions guaranteeing convergence in \eqref{eq:step-FPDHF} and  \eqref{eq:stepsizesFPDHFE} when $\zeta = 0$.}\label{T:algorithms}
	\end{table} 

We ran our experiments in MATLAB and the discrete Radon transform was implemented using the ASTRA toolbox   \cite{vanAarleASTRA2016,vanAarleASTRA2015}. As test image we considered the phantom of size $128\times 128$ shown in Figure~\ref{fig:phantom}. For the projector $T$ we considered a 2D fan-beam geometry with 90 projection angles uniformly distributed over the interval $[0, \pi]$. The source-to-origin distance and the origin-to-detector distance were set to $800$ and $400$, respectively. The forward projection operator was explicitly constructed using a line-length projection model. The observation $c$ is shown in Figure~\ref{fig:sino}. The wavelets transform is generated with a Symmlet basis of level $2$ and we considered the parameters $(\lambda_1,\lambda_2,\delta) = (10^{-4},10^{-2}, 10^{-5})$. We used the relative primal-dual error as the stopping criterion with a tolerance of $10^{-5}$; that is, the algorithm stops if
\begin{equation*}
    \sqrt{\frac{{\|x_{n+1}-x_n\|^2 + \|u_{n+1}-u_n\|^2 }}{\|x_n\|^2 + \|u_n\|^2}} < 10^{-5}.
\end{equation*}
The results of our experiments are presented in Table~\ref{T:results}. To find the best step-sizes for each algorithm, we tested different values of $\kappa$ and $\sigma$ which are described in Table~\ref{T:results}. From this table we can observe that CV1 requires a large number of iterations to reach the stopping criterion. This occurs because the cocoercivity constant is $\beta = \|T\|^{-2}\approx  4.5108 \cdot 10^{-5}$, forcing the step-sizes to be small. Furthermore, while each subiteration is cheaper compared to CV2, the latter requires fewer outer iterations. However, CV2 is slower overall because solving the linear system to calculate the resolvent of $T^*T$ makes each iteration computationally expensive. Note that, when $h=\lambda_1H_\delta \circ W$, the cocoercive constant is $\beta = \delta/\lambda_1=0.1$.  On the other hand, in every instance, the inexact algorithms require less CPU time to reach the stop criterion. In particular, the best instance is reached by EICV with $\kappa = 0.8$ and $\sigma = 0.9$, which reduces the CPU time by $57\%$ compared to the best instance of Condat-V\~u with exact resolvents. In addition, the inexact algorithms reach a lower final objective function value. From Table~\ref{T:results} we also observe that for larger values of $\sigma$, the algorithm requires a smaller number of subiterations which reduces the total CPU time. In Figure~\ref{fig:grap1}, we plot the relative error versus CPU time for the best instance of each algorithm, corroborating our previous observations. Furthermore, we can observe that CV1 exhibits pronounced oscillations in the relative error. The reconstructed images, in the best cases, are shown in Figure~\ref{fig:rec}. These results demonstrate the numerical advantages of considering approximations of the resolvents.
\begin{table}[h!]
    \centering
    \begin{tabular}{l c c c r r r}
        \toprule
        \textbf{Algorithm} & $\bm{\kappa}$ & $\bm{\sigma}$ & $\bm{F(x_n)}$ & \textbf{NI} & \textbf{T} & \textbf{SI} \\
        \midrule        
        \multicolumn{7}{c}{Exact Resolvent} \\
        \midrule
        \multirow{3}{*}{CV1} 
        & 0.4  & --  & 109.30 & 13\,007 & 1851 & -- \\
        & 0.5  & --  & 105.92 & 11\,919 & 1683 & -- \\
        & 0.6  & --  & 102.16 & 13\,800 & 1949 & -- \\
        \addlinespace
        CV2 
        & 0.8  & --  & 100.23 & 625   & 2706 & -- \\
        
        \midrule
        \multicolumn{7}{c}{Inexact Resolvent} \\
        \midrule
        \multirow{5}{*}{ICV} 
        & 0.8  & 0.1 & 100.23 & 875   & 1174 & 46.39 \\
        & 0.9  & 0.1 & 100.23  & 803   & 1134 & 48.45 \\
        & 0.99 & 0.1 & 100.23  & 822   & 1224 & 49.89 \\
        & 0.9  & 0.5 & 100.23  & 803   & 892  & 37.65 \\
        & 0.9  & 0.9 & 100.23  & 842   & 826 & 33.89 \\
        \addlinespace
        
        \multirow{5}{*}{EICV} 
        & 0.7  & 0.1 & 100.23  & 696   & 1293 & 65.48 \\
        & 0.8  & 0.1 & 100.23  & 625   & 1211 & 68.23 \\
        & 0.9  & 0.1 & 100.23  & 613   & 1227 & 70.51 \\
        & 0.8  & 0.5 & 100.23  & 634   & 808  & 44.38 \\
        & 0.8  & 0.9 & 100.23  & 647   & {\bf 719} & 42.14 \\
        \bottomrule
    \end{tabular}
    \caption{Results in terms of objective function value $F(x_n)$, number of iterations (NI), CPU time in seconds (T), and mean value of subiterations (SI). The best CPU time is highlighted in {\bf black}.}
    \label{T:results}
\end{table}

		\begin{figure}[h!]
			\centering
			\subfloat[$\overline{x}$]{\label{fig:phantom}\includegraphics[scale=0.8]{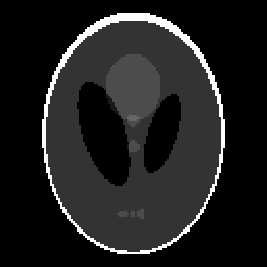}}\quad  \subfloat[$c$]{\label{fig:sino}\includegraphics[scale=0.5]{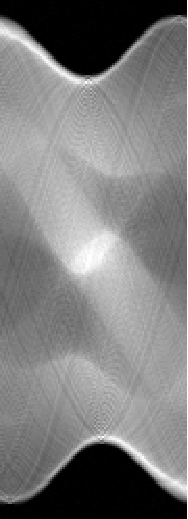}} 
			\label{fig:original} \caption{Phantom test image ($\overline{x}$) and noisy sinogram ($c$).} 
		\end{figure}
        \begin{figure}[h!]
			\centering
			\subfloat[$x_{\textnormal{CV1}}$, PSNR =   25.29 ]{\label{fig:CV1}\includegraphics[scale=0.8]{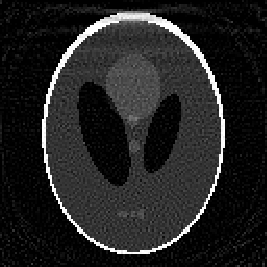}}\quad  \subfloat[$x_{\textnormal{CV2}}$,  PSNR = $28.31$ ]{\label{fig:CV2}\includegraphics[scale=0.8]{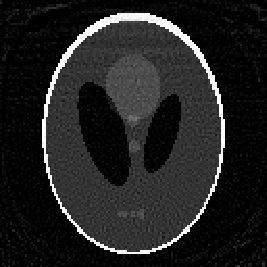}} \\
            \subfloat[$x_{\textnormal{ICV}}$,  PSNR = $28.30$ ]
            {\label{fig:ICV}\includegraphics[scale=0.8]{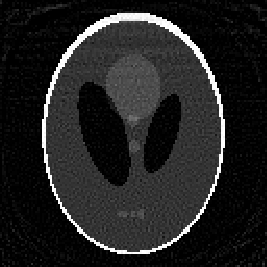}}\quad  \subfloat[$x_{\textnormal{EICV}}$, PSNR = $28.31$ ]{\label{fig:EICV}\includegraphics[scale=0.8]{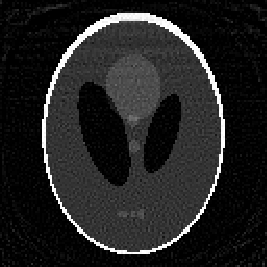}} 
			\caption{Reconstructed images with the Peak Signal-to-Noise Ratio (PSNR).} \label{fig:rec}
		\end{figure}
                \begin{figure}
			\centering
			\includegraphics[scale=0.4]{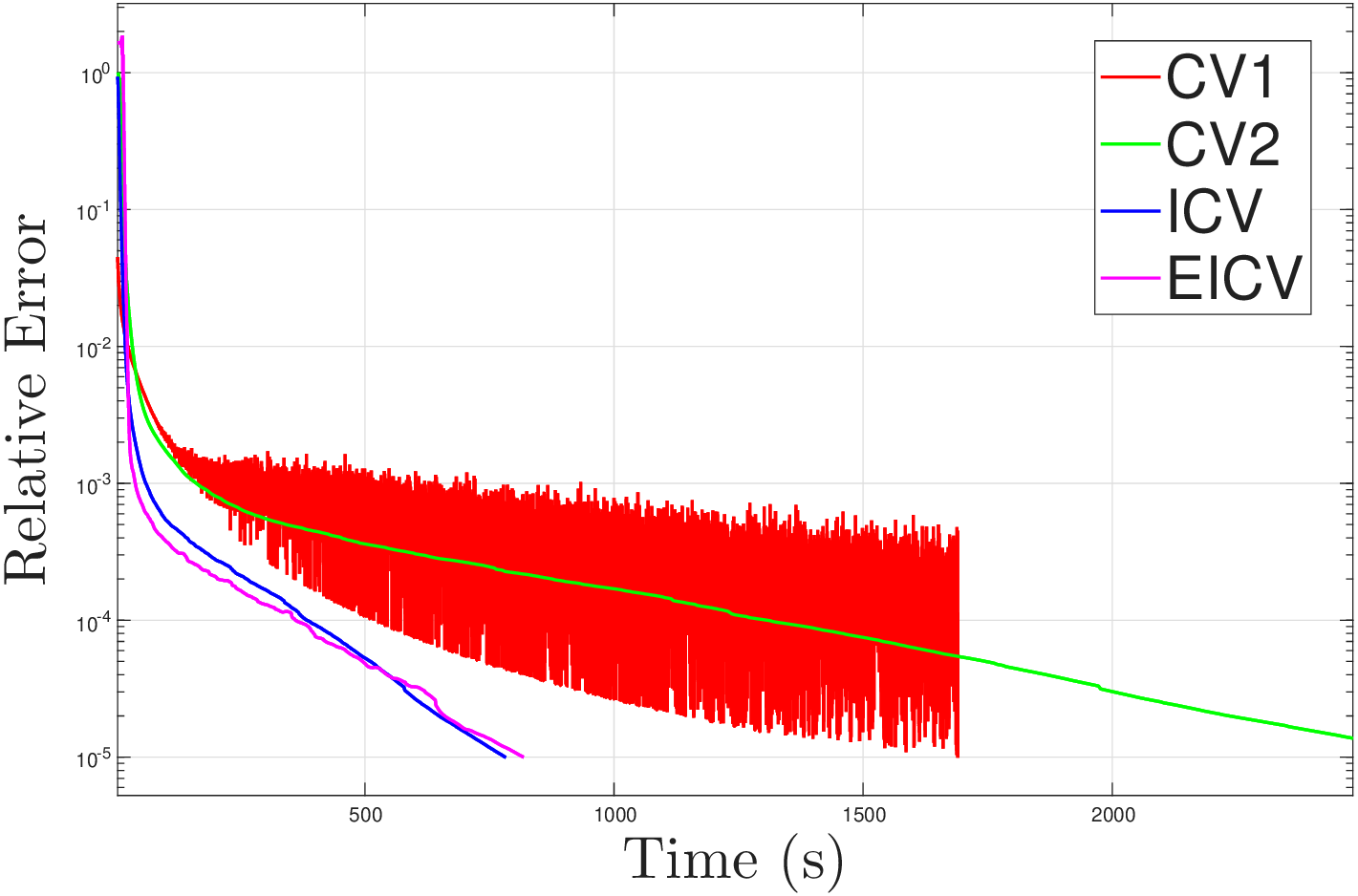}
			\caption{Relative error vs Time (s)} \label{fig:grap1}
		\end{figure}

		% \begin{table}[htbp!]
		% 	\centering
		% 	\setlength{\tabcolsep}{2.5pt}
		% 	\begin{tabular}{ccccccc}
		% 		Algorithm & $\kappa$ & $\sigma$ & $F(x_n)$ & NI & T & SI\\  \hline
  %           \multirow{3}{*}{CV1} & 0.4  & -- &    107.60  &    13007  & 1851.37 & --\\
  %           & 0.5  & -- &    104.99  &    11919  & 1683.11 & --\\
  %           & 0.6  & -- &    102.65  &    13800  & 1948.53 & --  \\ 
  %            \multirow{1}{*}{ICV} & 0.8  & -- &    102.45  &    625  & 2706.43 & --\\ \hline 
  %              \multirow{5}{*}{CV1} & 0.8  & 0.1 &    102.45  &    875  & 1173.85 & 46.39\\
  %              & 0.9  & 0.1 &    102.45  &    803  & 1134.11 & 48.45\\
  %           & 0.99  & 0.1 &    102.45  &    822  & 1223.76 & 49.89\\
  %           & 0.9  & 0.5 &    102.45  &    803  & 892.16 & 37.65\\
  %           & 0.9  & 0.9 &    102.45  &    842  & 825.89 & 33.89          
  %           \\\hline 
  %           \multirow{5}{*}{CV2} & 0.7  & 0.1 &    102.45  &    696 & 1293 & 65.48\\
  %              & 0.8  & 0.1 &    102.45  &    625  &  1211.15 & 68.23\\
  %           & 0.9  & 0.1 &    102.45  &    613  & 1226.84 & 70.51\\
  %           & 0.8  & 0.5 &    102.45  &    634  & 808.01 & 44.38\\
  %           & 0.8  & 0.9 &    102.45  &   647  & 718.52 & 42.14         
  %           \\\hline
		% 	\end{tabular}
		% 	\caption{Results in terms of objective function value $F(x_n)$, number of iterations (NI), CPU time in second (T), and mean value of sub-iterations (SI).}\label{T:results}
		% \end{table}
\section{Conclusion}\label{sec:conclu} 
In this article, we have proposed an inexact warped resolvent framework for structured monotone inclusions, based on relative-error evaluations and separating halfspace constructions. This geometric viewpoint provides a unified interpretation of several splitting schemes and allows for inexact backward computations in both standard and primal--dual settings. The proposed methods are shown to converge weakly under mild assumptions, strong convergence via Haugazeau-type projection steps, and linear convergence under metric subregularity. Numerical experiments on saddle-point problems and computed tomography reconstruction illustrate the flexibility of the framework and its practical potential in significantly reducing computational costs for large-scale problems. 
%As future 
\section*{Acknowledgments}
The first author was partially supported by Centro de Modelamiento Matemático (CMM) BASAL Fund FB210005 for Centers of Excellence, and FONDECYT Iniciación Grant 11261620. The second author was partially supported by ANID through FONDECYT Iniciación Grant 11250164.

\bibliographystyle{plain}
\bibliography{ref}	

\end{document}